\providecommand{\U}[1]{\protect\rule{.1in}{.1in}}
\providecommand{\U}[1]{\protect\rule{.1in}{.1in}}
\providecommand{\U}[1]{\protect\rule{.1in}{.1in}}
\providecommand{\U}[1]{\protect\rule{.1in}{.1in}}
\newtheorem{theorem}{Theorem}
\newtheorem{condition}[theorem]{Condition}
\newtheorem{example}[theorem]{Example}
\newtheorem{lemma}[theorem]{Lemma}
\newtheorem{proposition}[theorem]{Proposition}
\newtheorem{remark}[theorem]{Remark}
\theoremstyle{definition}
\numberwithin{equation}{section}
\newcommand{\resumename}{R\'esum\'e}
\begin{document}
\date{\today}
\title{Compactly supported bounded frames on Lie groups}
\author{Vignon Oussa}
\maketitle

\begin{abstract}
Let $G=NH$ be a Lie group where $N,H$ are closed connected subgroups of $G,$
and $N$ is an exponential solvable Lie group which is normal in $G.$ Suppose
furthermore that $N$ admits a unitary character $\chi_{\lambda}$ corresponding
to a linear functional $\lambda$ of its Lie algebra. We assume that the map
$h\mapsto Ad\left(  h^{-1}\right)  ^{\ast}\lambda$ defines an immersion at the
identity of $H$. Fixing a Haar measure on $H,$ we consider the unitary
representation $\pi$ of $G$ obtained by inducing $\chi_{\lambda}.$ This
representation which is realized as acting in $L^{2}\left(  H,d\mu_{H}\right)
$ is generally not irreducible, and we do not assume that it satisfies any
integrability condition. One of our main results establishes the existence of
a countable set $\Gamma\subset G$ and a function $\mathbf{f}\in L^{2}\left(
H,d\mu_{H}\right)  $ which is compactly supported and bounded such that
$\left\{  \pi\left(  \gamma\right)  \mathbf{f}:\gamma\in\Gamma\right\}  $ is a
frame. Additionally, we prove that $\mathbf{f}$ can be constructed to be
continuous. In fact, $\mathbf{f}$ can be taken to be as smooth as desired. Our
findings extend the work started in \cite{oussa2018frames} to the more general
case where $H$ is any connected Lie group. We also solve a problem left open
in \cite{oussa2018frames}. Precisely, we prove that in the case where $H$ is
an exponential solvable group, there exist a continuous (or smooth) function
$\mathbf{f}$ and a countable set $\Gamma$ such that $\left\{  \pi\left(
\gamma\right)  \mathbf{f}:\gamma\in\Gamma\right\}  $ is a Parseval frame.
Since the concept of well-localized frames is central to time-frequency
analysis, wavelet, shearlet and generalized shearlet theories, our results are
relevant to these topics and our approach leads to new constructions which
bear potential for applications. Moreover, our work sets itself apart from
other discretization schemes in many ways. (1) We give an explicit
construction of Hilbert frames and Parseval frames generated by bounded and
compactly supported windows. (2) We provide a systematic method that can be
exploited to compute frame bounds for our constructions. (3) We make no
assumption on the irreducibility or integrability of the representations of interest.

\end{abstract}
\tableofcontents

\onehalfspacing
%\address{Dept. of Mathematics\\ Bridgewater State University\\ Bridgewater, MA 02324 U.S.A.\\  } \email{voussa@bridgew.edu} \keywords{frames, solvable Lie groups, wavelets, sampling}

%\begin{abstract} \end{abstract}
%\subjclass[2000]{15A15,42C15}

%\doublespacing
%\linespread{1.5}

%\pdfrender{LineWidth=1pt}

\section{Introduction and overview of the work}

Let $H$ be a Lie group endowed with a fixed left Haar measure $d\mu_{H}.$ A
frame \cite{MR1946982} for the Hilbert space $\mathfrak{H}=L^{2}\left(
H,d\mu_{H}\right)  $ is a collection of vectors $\left\{  \mathbf{f}_{k}:k\in
I\right\}  $ such that there exist positive constants $A\leq B$ where
\begin{equation}
A\left\Vert \mathbf{g}\right\Vert _{\mathfrak{H}}^{2}\leq\sum_{k\in
I}\left\vert \left\langle \mathbf{g},\mathbf{f}_{k}\right\rangle
_{\mathfrak{H}}\right\vert ^{2}\leq B\left\Vert \mathbf{g}\right\Vert
_{\mathfrak{H}}^{2} \label{ineq}%
\end{equation}
for all $\mathbf{g}\in\mathfrak{H}.$ Note that the inequality (\ref{ineq})\ is
a relaxation of the Parseval formula known for orthonormal bases. The
constants $A,B$ are called the frame bounds. The largest possible lower frame
bound is called the optimal lower bound and the smallest possible upper-bound
is the optimal upper frame bound. In the case where $A=B=1$, we say that
$\left\{  \mathbf{f}_{k}:k\in I\right\}  $ is a Parseval frame. Every frame
$\left\{  \mathbf{f}_{k}:k\in I\right\}  $ for a Hilbert space $\mathfrak{H}$
gives rise to a basis-like expansion of the type $\mathbf{g=}\sum_{k\in
I}a_{k}\left(  \mathbf{g}\right)  \mathbf{f}_{k}$ for every vector
$\mathbf{g}$ in $\mathfrak{H.}$ Moreover, the linear functionals $a_{k}%
$\textbf{ }can be selected to be bounded on $\mathfrak{H}$. As such, for every
$\mathbf{g}$ in $\mathfrak{H,}$ there exist vectors $\left\{  \mathbf{h}%
_{k}:k\in I\right\}  $ in $\mathfrak{H}$ such that $\mathbf{g=}\sum_{k\in
I}\left\langle \mathbf{g,h}_{k}\right\rangle \mathbf{f}_{k}.$ There is indeed
a canonical choice for the system $\left\{  \mathbf{h}_{k}:k\in I\right\}  $
for which the expansion above is unconditional. Precisely, for a frame
$\left\{  \mathbf{f}_{k}:k\in I\right\}  ,$ the associated frame operator
\[
S:\mathbf{g}\mapsto S\left(  \mathbf{g}\right)  =\sum_{k\in I}\left\langle
\mathbf{g},\mathbf{f}_{k}\right\rangle _{\mathfrak{H}}\mathbf{f}_{k}%
\]
is bounded, self-adjoint, positive and invertible on $\mathfrak{H}$ and every
vector $\mathbf{g}$ admits an expansion of the form $\mathbf{g=}\sum_{k\in
I}\left\langle \mathbf{g},S^{-1}\mathbf{f}_{k}\right\rangle _{\mathfrak{H}%
}\mathbf{f}_{k}.$ The frame operator and its inverse are topological
isomorphisms of $\mathfrak{H}$ and the series $\sum_{k\in I}\left\langle
\mathbf{g},S^{-1}\mathbf{f}_{k}\right\rangle _{\mathfrak{H}}\mathbf{f}_{k}$
converges unconditionally in the norm of $\mathfrak{H}$ for each $\mathbf{g}$
in $\mathfrak{H}.$ In fact, the sequence $\left\{  S^{-1}\mathbf{f}_{k}:k\in
I\right\}  $ is called the canonical dual frame of $\left\{  \mathbf{f}%
_{k}:k\in I\right\}  .$ Moreover, each sequence $\left(  \left\langle
\mathbf{g},S^{-1}\mathbf{f}_{k}\right\rangle _{\mathfrak{H}}\right)  _{k\in
I}$ is square-summable. However, the coefficients appearing in the expansion
$\sum_{k\in I}\left\langle \mathbf{g},S^{-1}\mathbf{f}_{k}\right\rangle
_{\mathfrak{H}}\mathbf{f}_{k}$ need not be unique. In the case where $\left\{
\mathbf{f}_{k}:k\in I\right\}  $ is a Parseval frame, we have a much simpler
reconstruction procedure. The frame operator coincides with the identity map
and as a result,
\[
\mathbf{g=}\sum_{k\in I}\left\langle \mathbf{g},\mathbf{f}_{k}\right\rangle
_{\mathfrak{H}}\mathbf{f}_{k}\text{ for all }\mathbf{g}\in\mathfrak{H.}%
\]

It is well-known that the Schr\"{o}dinger representations of the 3-dimensional
Heisenberg Lie group provide the group-theoretic foundation for time-frequency
analysis
\cite{grochenig2013foundations,christensen2016introduction,heil2010basis}. The
3-dimensional Heisenberg group is often realized as a semi-direct product $%
%TCIMACRO{\U{211d} }%
%BeginExpansion
\mathbb{R}
%EndExpansion
^{2}\rtimes%
%TCIMACRO{\U{211d} }%
%BeginExpansion
\mathbb{R}
%EndExpansion
$ equipped with the following non-commutative operation%
\[
\left(  v,t\right)  \left(  w,s\right)  =\left(  v+\left[
\begin{array}
[c]{cc}%
1 & t\\
0 & 1
\end{array}
\right]  w,t+s\right)  .
\]
Three elementary classes of operators occur in the Schr\"{o}dinger
representations: scalar multiplications, modulation operators (phase
translations) and translations. Although the role played by the operators
acting by scalar multiplication is negligible, the combined actions of
modulations and translations can be discretized to construct frames and
orthonormal bases for $L^{2}(\mathbb{R})$. Gabor analysis offers a rigorous
study of the various ways by which one can select a window function and a
countable subset of the Heisenberg group to construct frames. Similarly, the
ax+b group plays a significant role in classical wavelet analysis. Like the
Heisenberg group, the ax+b group is also a solvable Lie group. However, unlike
the Heisenberg group, it is not a nilpotent group. Very often, this affine
group is realized as a semi-direct product $%
%TCIMACRO{\U{211d} }%
%BeginExpansion
\mathbb{R}
%EndExpansion
\rtimes%
%TCIMACRO{\U{211d} }%
%BeginExpansion
\mathbb{R}
%EndExpansion
$ endowed with the multiplication
\[
\left(  x,a\right)  \left(  y,b\right)  =\left(  x+e^{a}y,a+b\right)  .
\]
Up to unitary equivalence, the ax+b group has two infinite-dimensional
irreducible representations. These representations may be realized as acting
in $L^{2}(\mathbb{R})$ as follows
\[
\pi_{\pm}\left(  x,a\right)  \mathbf{f}\left(  t\right)  =e^{\pm2\pi ie^{-t}%
x}\mathbf{f}\left(  t-a\right)  .
\]
Wavelet analysis provides means by which we can discretize the representations
$\pi_{\pm}$ (or their direct sum) to construct nice frames (we will say more
about this later.) A common feature of the examples of the Heisenberg group
and the ax+b group worth noting is the following: the irreducible
representations giving rise to wavelets and Gabor frames are all monomial
(induced by characters.) Precisely, they are obtained by inducing a unitary
character of a normal closed subgroup. It turns out that for a surprisingly
large class of solvable Lie groups, similar constructions are possible
\cite{oussa2018frames,grochenig2017orthonormal}. One may then ask the
following: to which extent can we generalize the results in
\cite{oussa2018frames,grochenig2017orthonormal} in a unified manner? The main
purpose of this work is to address this question.

Let $H$ be a connected Lie group endowed with a fixed left Haar measure
$d\mu_{H}.$ Let us suppose that $\pi$ is a strongly continuous unitary
representation of a Lie group $G$ acting in the Hilbert space $\mathfrak{H}%
=L^{2}\left(  H,d\mu_{H}\right)  .$ We investigate conditions under which, it
is possible to find a discrete subset $\Gamma$ of $G$ and a function
$\mathbf{f}$ such that the following holds. $\mathbf{f}$ is bounded and
compactly supported on $H$ and the system $\left\{  \pi\left(  \gamma\right)
\mathbf{f}:\gamma\in\Gamma\right\}  $ is either a frame or a Parseval frame or
an orthonormal basis. We are especially interested in finding conditions under
which the window function $\mathbf{f}$ is either continuous or smooth. In a
nutshell, we seek to construct `nice' frames, and when such a frame exists, we
shall call it an $H$-localized $\pi$-frame.

Suppose $L$ is the left regular representation of $H$ acting in $\mathfrak{H}%
,$ and $H$ is not discrete. If $\mathbf{f}$ is compactly supported and bounded
then for any countable set $\Gamma\subset H,$ it is known that the system
$\left\{  \pi\left(  \gamma\right)  \mathbf{f}:\gamma\in\Gamma\right\}  $ can
never be a frame for $\mathfrak{H}$ \cite{fuhr2018groups}. This observation
excludes the left regular representation as a viable choice for the
construction of nice frames on $H.$

In the situation where $\pi$ is an irreducible representation which is also
integrable, the coorbit theory
\cite{MR1021139,grochenig1991describing,fuhr2015coorbit} developed by
Feichtinger and Gr\"{o}chenig has proved to be a powerful discretization
scheme for the construction of Hilbert space frames and atoms for other Banach
spaces satisfying certain regular conditions.

Olafsson and Christensen recently introduced a generalization of coorbit
theory in which, the integrability and irreducibility conditions of coorbit
theory were removed \cite{christensen2009examples,christensen2011coorbit}.
Their theory hinges on the existence of a cyclic vector satisfying a
reproducing formula equation together with other technical assumptions. Other
generalizations of coorbit theory can be found in
\cite{dahlke2017coorbit,fornasier2005continuous,rauhut2011generalized}.

Our work sets itself apart from other discretization schemes in several ways.

\begin{itemize}
\item We make no assumption on the irreducibility or integrability of the
representations of interest.

\item We are mainly interested in the explicit construction of Hilbert space
frames, Parseval frames and orthonormal bases generated by bounded and
compactly supported windows.

\item We are also interested in providing a systematic method that can be
exploited to compute (optimal) frame bounds for our constructions.
\end{itemize}

Let $G$ be a Lie group such that $G=NH,$ $H,N$ are connected closed subgroups,
and $N$ is simply connected and normal in $G$. Assume furthermore that $\pi$
is a unitary representation of $G$ \cite{folland2016course} obtained by
inducing a unitary character of $N$ and is realized as acting in the Hilbert
space $L^{2}\left(  H,d\mu_{H}\right)  .$ As a starting point, we consider a
unitary character $\chi$ of $N$ associated to a linear functional $\lambda$ of
the Lie algebra of $N.$ The conjugation action of $H$ on $N$ gives rise to a
smooth action $\star$ of $H$ on the linear functionals of the Lie algebra of
$N$ . This action plays a decisive role in our work. We prove that if the map
$h\mapsto h\star\lambda$ defines an immersion at the identity of the group $H$
(see Theorem \ref{main}) then $H$-localized $\pi$-frames can be constructed
quite explicitly. These types of frames naturally arise in the context of
shearlet, time-frequency analyses, and wavelet theory
\cite{kutyniok2012introduction,kutyniok2012shearlets,
kutyniok2011compactly,Gr,fuhr2016vanishing,fuhr2015coorbit,fuhr2017simplified}
where frames generated by functions satisfying certain regular conditions
(such as fast decay and various degrees of smoothness) are often sought after.
As such, the present project, which was initiated in \cite{oussa2018frames}
provides a unified discretization scheme relevant to these topics.
Furthermore, our main results (see Theorem \ref{main} and Theorem \ref{PFEXP})
extend the work of \cite{oussa2018frames} to a significantly broader class of
representations (see Proposition \ref{embed} for an example). While the
central results of \cite{oussa2018frames} only focus on cases where the
conormal subgroup $H$ of $G$ is completely solvable, we allow $H$ to be any
connected Lie group here (see for example, Proposition \ref{embed}.)

\subsection{Assumptions\label{assump}}

Before we present our main results, let us first formally introduce some
assumptions. Let $H$ be a connected Lie group with Lie algebra $\mathfrak{h.}$
Let us also assume that there exists a Lie algebra $\mathfrak{g=n\oplus h}$
such that $\mathfrak{n}$ is an exponential solvable ideal of $\mathfrak{g}$
satisfying $\dim\left(  \mathfrak{n}\right)  =n \geq\dim\left(  \mathfrak{h}%
\right)  =r .$ Put%
\[
\mathfrak{n}=\sum_{k=1}^{n}\mathbb{R}X_{k}\text{, }\mathfrak{h}=\sum_{k=1}%
^{r}\mathbb{R}A_{k}\text{ and }N=\exp\left(  \mathfrak{n}\right)  .
\]
Furthermore, we assume that $N$ and $H$ are closed subgroups of $G.$ Note that
by assumption, the exponential map defines a global diffeomorphism between
$\mathfrak{n}$ and $N.$ As such, $N$ is a simply connected normal
(exponential) solvable subgroup of $G.$

Let
\[
\lambda\in\mathfrak{n}^{\ast}=\sum_{k=1}^{n}\mathbb{R}X_{k}^{\ast}%
\]
be a linear functional of $\mathfrak{n}$ where $\left\{  X_{1}^{\ast}%
,X_{2}^{\ast},\cdots,X_{n}^{\ast}\right\}  $ is a basis for the dual vector
space $\mathfrak{n}^{\ast}$ satisfying $\left\langle X_{j}^{\ast}%
,X_{k}\right\rangle =\delta_{j,k}$ for $1\leq j,k\leq n.$ We also assume that
the derived ideal
\[
\left[  \mathfrak{n,n}\right]  =%
%TCIMACRO{\U{211d} }%
%BeginExpansion
\mathbb{R}
%EndExpansion
\text{-span}\left\{  \left[  X,Y\right]  :X,Y\in\mathfrak{n}\right\}
\]
is contained in the kernel of $\lambda.$ As such, the linear functional
$\lambda$ determines a unitary character of the normal subgroup of $N$ defined
by $\chi_{\lambda}\left(  \exp X\right)  =e^{2\pi i\left\langle \lambda
,X\right\rangle }$ for $\exp\left(  X\right)  \in N.$ Next, we consider the
induced representation
\begin{equation}
\pi=\mathrm{ind}_{N}^{NH}\left(  \chi_{\lambda}\right)  \label{pi}%
\end{equation}
realized as acting on the Hilbert space $\mathfrak{H}=L^{2}\left(  H,d\mu
_{H}\right)  $ as follows. Given $x\in G,$ we define the endomorphism
$Ad\left(  s\right)  $ as the differential of the smooth map $y\mapsto
sys^{-1}$ at the neutral element of $G.$ Next, for $\mathbf{f}\in\mathfrak{H}%
$, $x\in N,$ and $z\in H,$ we have%
\begin{equation}
\left[  \pi\left(  x\right)  \right]  \mathbf{f}\left(  h\right)  =e^{2\pi
i\left\langle Ad\left(  h^{-1}\right)  ^{\ast}\lambda,\log\left(  x\right)
\right\rangle }\mathbf{f}\left(  h\right)  \text{ and }\left[  \pi\left(
z\right)  \right]  \mathbf{f}\left(  h\right)  =\mathbf{f}\left(
z^{-1}h\right)  . \label{irred}%
\end{equation}
The reader who is interested in learning about the fundamental principles of
induced representations is invited to peruse Chapter 6 in the textbook of
Folland \cite{folland2016course}.

On one hand, we remark that given $x\in N,$ the operator $\pi\left(  x\right)
$ acts in a way which resembles a modulation action. We call the action
induced by such an operator a generalized modulation action. On the other
hand, the operators $\pi\left(  z\right)  $ (where $z\in H$) act on
$\mathfrak{H}$ by left translations and the combined action of the normal and
conormal parts of the group gives an action resembling that of time-frequency
translations \cite{MR1946982}. Perhaps, it is also worth highlighting that all
irreducible representations of groups connected to Gabor theory, wavelet and
shearlet analyses naturally arise in this fashion (see \cite[Section
1.4]{oussa2018frames}).

We refer the reader who might be intrigued by the size of the class of groups
and representations satisfying the properties described above to Proposition
\ref{embed}. It is proved in Proposition \ref{embed} that for any given
connected matrix group $H$, there exists a matrix group $G=NH$ and a unitary
representation $\pi$ satisfying all conditions listed above.

\subsection{Main theorems}

The main results of our investigation are summarized in the following theorems.

\begin{theorem}
\label{main} Let $N,H$ and $\pi$ be as described in (\ref{irred}). Assume that
the map $h\mapsto Ad(h^{-1})^{\ast}\lambda$ is an immersion at the identity of
$H$. Then there exists a relatively compact neighborhood $\mathcal{O}$ of the
identity in $H$ such that if $\mathbf{f}$ is a continuous function supported
on $\mathcal{O}$, then there exists a countable set $\Gamma\subset G$ such
that $\left\{  \pi\left(  \gamma\right)  \mathbf{f}:\gamma\in\Gamma\right\}  $
is a frame for $L^{2}\left(  H,d\mu_{H}\right)  .$ Moreover, $\mathbf{f}$ can
be chosen to be as smooth as desired.
\end{theorem}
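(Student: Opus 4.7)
The plan is to build $\Gamma = \Gamma_H \cdot \exp(\Lambda)$, where $\Gamma_H \subset H$ arises from a tiling of $H$ by left translates of a relatively compact subset of $\mathcal O$, and $\Lambda \subset \mathfrak n$ is an $r$-dimensional lattice that plays the role of modulation frequencies. The analytic core will be a local Plancherel identity that converts the sum of squared inner products over $\exp(\Lambda)$ into an $L^2$ integral on $H$.

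First I use the immersion hypothesis. Writing $\Phi(h) = Ad(h^{-1})^{\ast}\lambda$, the differential $d_e\Phi$ has rank $r = \dim H$, so after reordering $X_1, \dots, X_n$ the $r \times r$ minor $[\langle d_e\Phi(A_j), X_k\rangle]_{1\le j,k\le r}$ is invertible. Let $V = \mathrm{span}\{X_1, \dots, X_r\} \subset \mathfrak n$, identify $V^{\ast}$ with $\mathrm{span}\{X_1^{\ast}, \dots, X_r^{\ast}\} \subset \mathfrak n^{\ast}$, and let $\pi_V : \mathfrak n^{\ast} \to V^{\ast}$ be the coordinate projection. Then $\psi := \pi_V \circ \Phi$ is a local diffeomorphism at $e_H$. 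I shrink to a relatively compact open neighborhood $\mathcal O$ of $e_H$ on which $\psi$ is a diffeomorphism onto its image, and then pick a lattice $\Lambda \subset V$ fine enough that $\psi(\mathcal O)$ lies inside a fundamental domain of the dual lattice $\Lambda^{\ast} \subset V^{\ast}$.

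For the key identity, fix $g \in L^2(H, d\mu_H)$ and $z \in H$. Left invariance of $\mu_H$ and (\ref{irred}) yield
\[
\langle g, \pi(z)\pi(\exp X)\mathbf f\rangle = \int_{\mathcal O} g(zh)\, \overline{\mathbf f(h)}\, e^{-2\pi i \langle \Phi(h), X\rangle}\, d\mu_H(h),\qquad X \in \Lambda.
\]
Since $X \in V$, the phase depends on $h$ only through $\psi(h)$. Pushing the integrand forward by $\psi$ to a function supported in a fundamental domain of $\Lambda^{\ast}$ and invoking Parseval's identity for Fourier series on the torus $V^{\ast}/\Lambda^{\ast}$ gives
\[
\sum_{X \in \Lambda} \big|\langle g, \pi(z\exp X)\mathbf f\rangle\big|^2 = c \int_{\mathcal O} |g(zh)|^2\, |\mathbf f(h)|^2\, J(h)\, d\mu_H(h),
\]
where $c = \mathrm{vol}(V^{\ast}/\Lambda^{\ast})$ and $J > 0$ is a continuous Jacobian.

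Finally I tile. Using $\sigma$-compactness, I choose a countable $\Gamma_H \subset H$ and a relatively compact Borel set $\mathcal D$ with $\overline{\mathcal D} \subset \mathcal O$ so that $\{z\mathcal D : z \in \Gamma_H\}$ partitions $H$ modulo null sets, and pick $\mathbf f$ continuous (smoothness is available on demand by mollification), supported in $\mathcal O$, and bounded below by a positive constant on $\overline{\mathcal D}$. Summing the identity above over $\Gamma_H$ and applying left invariance of $\mu_H$ gives
\[
\sum_{\gamma \in \Gamma} |\langle g, \pi(\gamma)\mathbf f\rangle|^2 = c \int_H |g(h)|^2\, \Theta(h)\, d\mu_H(h),\qquad \Theta(h) := \sum_{z \in \Gamma_H} |\mathbf f(z^{-1}h)|^2\, J(z^{-1}h).
\]
A uniform lower bound on $\Theta$ comes from the unique tile containing $h$, where $|\mathbf f|^2 J$ is positive and continuous on a compact set; the upper bound follows from the overlap estimate $\#\{z \in \Gamma_H : z^{-1}h \in \mathcal O\} \le \mu_H(\mathcal O^{-1}\mathcal D) / \mu_H(\mathcal D) < \infty$, which uses relative compactness of $\mathcal O$ together with the disjointness of $\{z\mathcal D\}$. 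The frame inequality follows. The main technical obstacle will be reconciling the two smallness constraints -- $\psi(\mathcal O)$ must fit in a fundamental domain of $\Lambda^{\ast}$, while $\mathcal D \subset \mathcal O$ must tile $H$ economically -- which I resolve by first shrinking $\mathcal O$ and then refining $\Lambda$.
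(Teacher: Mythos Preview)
Your overall architecture matches the paper's: use the immersion hypothesis to project onto an $r$-dimensional coordinate subspace $V\subset\mathfrak n$, obtain a local diffeomorphism $\psi=\pi_V\circ\Phi$ on a small $\mathcal O$, apply Parseval for Fourier series on a fundamental domain of a dual lattice to convert $\sum_{X\in\Lambda}|\langle g,\pi(z\exp X)\mathbf f\rangle|^2$ into a weighted $L^2$-integral with a Jacobian weight, then sum over a discrete $\Gamma_H\subset H$ to cover. This is exactly the content of the paper's Lemma~\ref{LemmaS}, Lemma~\ref{Jacob}, Lemma~\ref{tstar} and Lemma~\ref{FProperty}, so the analytic core is right.

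There is, however, a genuine mismatch with the quantifier structure of the theorem. The statement asserts that \emph{any} nonzero continuous $\mathbf f$ supported in $\mathcal O$ admits a frame set $\Gamma$ (with $\Gamma$ allowed to depend on $\mathbf f$); you instead \emph{pick} $\mathbf f$ bounded below on $\overline{\mathcal D}$. As written this proves a strictly weaker claim. The fix is to let $\Gamma_H$ depend on $\mathbf f$: given arbitrary nonzero continuous $\mathbf f$, find an open set on which $|\mathbf f|$ is bounded below, and place $\mathcal D$ there. The paper handles this uniformly via Lemma~\ref{communicated}: for any nonzero nonnegative compactly supported function (here $|\mathbf f|^2\mathbf w$) one can find a $U$-separated, $V$-dense discrete set $\Gamma_H$ so that the sum $\sum_{\ell\in\Gamma_H}|\mathbf f(\ell h)|^2\mathbf w(\ell h)$ is bounded above and below. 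This is also cleaner than your tiling assertion, since the existence of a countable $\Gamma_H$ and a \emph{single} Borel $\mathcal D\subset\mathcal O$ with $\{z\mathcal D\}$ partitioning $H$ is not something you justify, and is in fact unnecessary: separated-plus-dense (which follows from a maximal $Z$-separated set for small symmetric $Z$) already gives both frame bounds.
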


We remark that in Theorem \ref{main}, the set $\Gamma$ depends on a choice of
$\mathbf{f}$ and $\mathcal{O}.$ Moreover, the following establishes the
existence of a Parseval frame generated by a continuous and compactly
supported function when $H$ is exponential and solvable. This does close a
significant gap in our previous work \cite{oussa2018frames}.

\begin{theorem}
\label{PFEXP}Let $N,H$ and $\pi$ be as described in (\ref{irred}). If the map
$h\mapsto Ad(h^{-1})^{\ast}\lambda$ is an immersion at the identity of $H$ and
if $H$ is an exponential solvable Lie group then there exist a relatively
compact neighborhood $\mathcal{O}$ of the identity in $H,$ a continuous
function $\mathbf{f}$ supported on $\mathcal{O}$ and a countable subset
$\Gamma$ of $G$ such that $\left\{  \pi\left(  \gamma\right)  \mathbf{f}%
:\gamma\in\Gamma\right\}  $ is a Parseval frame.
\end{theorem}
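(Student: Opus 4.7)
The strategy is to bypass the standard $S^{-1/2}$ renormalization (which would destroy both compact support and continuity) and instead construct a Parseval frame directly through a Gabor-style ``tile-and-modulate'' scheme. Exponential solvability of $H$ enters in two essential ways: it guarantees that $\exp:\mathfrak{h}\to H$ is a global diffeomorphism, and it permits a measurable tiling of $H$ by translates of a small neighborhood of the identity.

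To start, I would use the immersion hypothesis to choose $Y_1,\dots,Y_r\in\mathfrak{n}$ for which the map
\[
\Phi: h\longmapsto \bigl(\langle Ad(h^{-1})^{\ast}\lambda,Y_1\rangle,\dots,\langle Ad(h^{-1})^{\ast}\lambda,Y_r\rangle\bigr)
\]
is a local diffeomorphism at the identity of $H$. Shrinking the neighborhood from Theorem \ref{main} and rescaling the $Y_j$, I arrange that $\Phi$ maps a relatively compact open set $U\subset H$ diffeomorphically onto an open subset of $[0,1)^r$, so that on $U$ the generalized modulations $\pi(\exp(k_1Y_1+\cdots+k_rY_r))$ become the Fourier characters $e^{2\pi i k\cdot\Phi(h)}$ for $k\in\mathbb{Z}^r$. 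Using exponentiality of $H$ and a recursive argument along a composition series of $\mathfrak{h}$ (analogous to the constructions in \cite{oussa2018frames,grochenig2017orthonormal} but now carried through exponential quotients), I would produce a countable set $\{h_j\}_{j\in J}\subset H$ such that $H=\bigcup_{j\in J} h_j U$ with pairwise null-overlapping tiles.

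I would then fix $\mathcal{O}\supset \overline{U}$ relatively compact and select a continuous $\mathbf{f}$ supported on $\mathcal{O}$ satisfying the Jacobian-weighted square partition of unity
\[
\sum_{j\in J}\bigl|\mathbf{f}(h_j^{-1}h)\bigr|^2 J(h_j^{-1}h)=1 \qquad \text{for a.e.\ } h\in H,
\]
where $J$ is the density that transports Haar measure on $U$ to Lebesgue measure on $\Phi(U)$. Setting $\Gamma=\{\exp(k_1Y_1+\cdots+k_rY_r)h_j^{-1} : k\in\mathbb{Z}^r,\, j\in J\}$, the Parseval identity follows by fixing $\mathbf{g}\in L^2(H,d\mu_H)$, expanding
\[
\sum_{\gamma\in\Gamma}|\langle \mathbf{g},\pi(\gamma)\mathbf{f}\rangle|^2 = \sum_{j\in J}\sum_{k\in\mathbb{Z}^r}\biggl|\int_H \mathbf{g}(h)\overline{\mathbf{f}(h_j^{-1}h)}\,e^{-2\pi i k\cdot\Phi(h_j^{-1}h)}\,d\mu_H(h)\biggr|^2,
\]
making the substitution $x=\Phi(h_j^{-1}h)$ to recognize the inner sum, for fixed $j$, as the Parseval identity for Fourier series on $[0,1)^r$, and finally summing over $j$ using the partition-of-unity relation to recover $\|\mathbf{g}\|^2$.

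The principal obstacle is the precise bookkeeping of the Jacobian weights: the chart $\Phi$, the Haar density on $H$, and the absence of left-invariance of $\Phi$ under $h\mapsto h_j^{-1}h$ each contribute factors that must combine exactly so that the local integrals reduce to standard Fourier integrals on the unit cube. A secondary technical point is the existence of the tiling $\{h_j U\}$, which is precisely where exponential solvability of $H$ is used essentially --- it is the extension of Malcev-style tiling constructions beyond the completely solvable case that was left open in \cite{oussa2018frames}.
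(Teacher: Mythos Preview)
Your overall architecture matches the paper's: localize via the immersion chart, turn the $N$-action into honest Fourier characters on a cube, demand a Jacobian-weighted square partition of unity $\sum_j|\mathbf{f}(h_j^{-1}h)|^2 J(h_j^{-1}h)=1$, and read off Parseval from Plancherel on $[0,1)^r$ plus a sum over $j$. The paper does exactly this (Lemma~\ref{lemmata} together with the computation already carried out in Lemma~\ref{FProperty}).

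Where your proposal has a genuine gap is that you have misidentified where the work lies. The measurable tiling $H=\bigcup h_jU$ is \emph{not} the hard step --- something of that sort is available on any connected Lie group (cf.\ Lemma~\ref{communicated}) and is not what was left open in \cite{oussa2018frames}. The hard step, which you pass over with ``select a continuous $\mathbf{f}$ \dots satisfying the \dots partition of unity,'' is the existence of a \emph{single} continuous compactly supported function whose $\Gamma_H$-translates square-sum to a constant. A generic partition of unity subordinate to the cover $\{h_j\mathcal{O}\}$ will not have this one-function translate structure, and a tiling by $\{h_jU\}$ does not by itself produce one. The paper's proof of Theorem~\ref{PFEXP} supplies this by induction on $\dim H$: exponential solvability yields a factorization $H=H_{\mathbf{n}}H_{\mathbf{c}}$ with $H_{\mathbf{n}}$ normal of codimension one and $H_{\mathbf{c}}\cong\mathbb{R}$ (Lemmas~\ref{VAR0} and~\ref{VAR}); one sets $\mathbf{s}(h_{\mathbf{n}}h_{\mathbf{c}})=\mathbf{s}_{\mathbf{n}}(h_{\mathbf{n}})\mathbf{s}_{\mathbf{c}}(h_{\mathbf{c}})$ and uses normality of $H_{\mathbf{n}}$ to make the double sum over $\Gamma_{\mathbf{n}}\Gamma_{\mathbf{c}}$ collapse to $1$, with a B-spline handling the one-dimensional base case. \emph{That} normality computation is where exponential solvability is actually used. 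Your ``recursive argument along a composition series'' is aimed at the tiling rather than at the partition of unity, so the essential step is still missing.

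One smaller slip: the sampling set $\Gamma=\{\exp(\sum k_jY_j)\,h_j^{-1}\}$ is in the wrong order for the integral you wrote. The expression with $\mathbf{f}(h_j^{-1}h)$ and phase $\Phi(h_j^{-1}h)$ corresponds to $\gamma=h_j\exp(\sum k_jY_j)$, i.e.\ the paper's ordering $\Gamma_H^{-1}\Gamma_N$. As the paper's remark on the $ax+b$ group preceding Lemma~\ref{LemmaS} shows, the reversed ordering can genuinely fail to give a frame at all.
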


A few additional observations are in order.

\begin{remark}
\text{ }

\begin{enumerate}
\item In a systematic fashion, every frame $\left\{  \pi\left(  \gamma\right)
\mathbf{f}:\gamma\in\Gamma\right\}  $ for the Hilbert space $L^{2}\left(
H,d\mu_{H}\right)  $ gives rise to new frames generated by a unitary
representation of $G$ as follows. Let $\alpha$ be a Lie group automorphism of
$G.$ According to \cite[Lemma 2.1.3]{corwin2004representations}, there exists
a unitary map
\begin{equation}
Q:L^{2}\left(  H,d\mu_{H}\right)  \rightarrow L^{2}\left(  NH/\alpha
^{-1}\left(  N\right)  ,d\mu_{NH/\alpha^{-1}\left(  N\right)  }\right)
\label{Q}%
\end{equation}
such that
\[
Q\left[  \mathrm{Ind}_{N}^{NH}\left(  \chi_{\lambda}\right)  \left(
\alpha\left(  x\right)  \right)  \right]  \mathbf{f}=\left[  \mathrm{Ind}%
_{\alpha^{-1}\left(  N\right)  }^{NH}\left(  \chi_{\lambda}\circ\alpha\right)
\left(  x\right)  \right]  Q\mathbf{f}%
\]
for all $x\in G$ and for all $\mathbf{f}\in L^{2}\left(  H,d\mu_{H}\right)  .$
An explicit construction of a unitary map $Q$ intertwining the representations
above is described in the proof of \cite[ Lemma 2.1.3]%
{corwin2004representations}. In fact, if $\pi$ is an irreducible
representation of $G,$ according to Schur's lemma, the unitary operator
described in the proof of \cite[ Lemma 2.1.3]{corwin2004representations} is
unique up to multiplication by a constant. In the definition of $Q,$ (see
(\ref{Q})) the measure $d\mu_{NH/\alpha^{-1}\left(  N\right)  }$ is a quasi
$G$-invariant measure on a cross-section for $NH/\alpha^{-1}\left(  N\right)
$ \cite[Section 6.1]{folland2016course}. Letting
\[
\pi_{\lambda^{\left(  \alpha\right)  }}=\mathrm{Ind}_{\alpha^{-1}\left(
N\right)  }^{NH}\left(  \chi_{\lambda}\circ\alpha\right)  \text{ and }%
\pi_{\lambda_{\left(  \alpha\right)  }}=\mathrm{Ind}_{N}^{NH}\left(
\chi_{\lambda}\circ\alpha\right)  ,
\]
the following is immediate. (1) $\left\{  \pi\left(  \gamma\right)
\mathbf{f}:\gamma\in\Gamma\right\}  $ is a frame if and only if
\[
\left\{  \pi_{\lambda^{\left(  \alpha\right)  }}\left(  \gamma\right)
Q\mathbf{f}:\gamma\in\alpha^{-1}\left(  \Gamma\right)  \right\}
\]
is a frame for $L^{2}\left(  NH/\alpha^{-1}\left(  N\right)  ,d\mu
_{NH/\alpha^{-1}\left(  N\right)  }\right)  .$ (2) If $\alpha$ is an
automorphism of $G$ fixing the normal subgroup $N$ then $\left\{  \pi\left(
\gamma\right)  \mathbf{f}:\gamma\in\Gamma\right\}  $ is a frame for
$L^{2}\left(  H,d\mu_{H}\right)  $ if and only if the collection $\left\{
\pi_{\lambda_{\left(  \alpha\right)  }}\left(  \gamma\right)  Q\mathbf{f}%
:\gamma\in\alpha^{-1}\left(  \Gamma\right)  \right\}  $ is a frame for
$L^{2}\left(  H,d\mu_{H}\right)  .$

\item Suppose that $\left\{  \pi\left(  \gamma\right)  \mathbf{f}:\gamma
\in\Gamma\right\}  $ is a frame for the Hilbert space $L^{2}\left(  H,d\mu
_{H}\right)  .$ If $Q$ is a bounded and invertible linear operator commuting
with all operators in the set $\left\{  \pi\left(  \gamma\right)  :\gamma
\in\Gamma\right\}  $ then clearly, $\left\{  \pi\left(  \gamma\right)
Q\mathbf{f}:\gamma\in\Gamma\right\}  $ is also a frame for $L^{2}\left(
H,d\mu_{H}\right)  .$

\item The mapping $h\mapsto Ad\left(  h^{-1}\right)  ^{\ast}$ is a linear
representation of $H$ acting on the dual of the Lie algebra of $N.$ Moreover,
the stabilizer subgroup
\[
H_{\lambda}=\left\{  h\in H:Ad\left(  h^{-1}\right)  ^{\ast}\lambda
=\lambda\right\}
\]
is a closed subgroup of $H$ and $h\mapsto Ad(h^{-1})^{\ast}\lambda$ is an
immersion at the identity of $H$ if and only if $H_{\lambda}$ is discrete (see
Proposition \ref{discrete}.)
\end{enumerate}
\end{remark}

Intuitively, our strategy in proving our main results is two-fold. First, we
look for a set $M$ inside $N$ containing a discrete set $\Gamma_{N}$ such that
the corresponding operators applied to a suitable function supported on a
neighborhood of the identity in $H$ generate what we shall call a `local
frame'. The construction of local frames follows from the fact that $H$ is
locally Euclidean and that under suitable change of variables, the action of
$N$ gives rise to some local Fourier series. It is important to note that $M$
is generally a proper sub-manifold of $N.$ In fact, the discretization of $M$
for the construction of a local frame is not apparent and does require careful
work. Secondly, via $\pi,$ $H$ acts on itself by left translations. For a
suitable countable subset $\Gamma_{H}\subset H,$ the operators $\pi
(\gamma),\gamma\in\Gamma_{H}$ give a covering of $H$ allowing us to construct
nice frames for $L^{2}(H)$ from our so-called local frames.

In Section \ref{proofs}, we shall present a number of intermediate facts
leading to the proof of our main results (Theorem \ref{main} and Theorem
\ref{PFEXP}). The third section of the work contains explicit examples showing
how our scheme can be exploited for the construction of frames, Parseval
frames and orthonormal bases.

\section{Intermediate results and proofs of Theorem \ref{main} and Theorem
\ref{PFEXP}}

\label{proofs}

\subsection{Preliminaries}

Let us start by fixing our notation.

\begin{itemize}
\item Let $T$ be a linear operator acting on a vector. $T^{\ast}$ stands for
the adjoint of the linear operator $T$ and the transpose of a matrix $A$ is
denoted $A^{T}.$

\item Given a vector space $V,$ the max-norm of an arbitrary vector $v\in V$
is $\left\Vert v\right\Vert _{\max}=\max_{k}\left\vert v_{k}\right\vert .$

\item Given a Lebesgue measurable subset $E$ of $\mathbb{R}^{d}$, $\left\vert
E\right\vert $ stands for the Lebesgue measure of $E$ and the indicator
function of a set $A$ is denoted $1_{A}.$

\item Given a finite set $J$, the number of elements in $J$ is denoted
$\sharp(J).$

\item The trace of a matrix $M$ is denoted $\mathrm{Tr}\left(  M\right)  .$

\item The zero square matrix of order $m$ is often denoted $0_{m}.$

\item The identity endomorphism acting on a vector space $V$ is often denoted
$\mathrm{id}$.
\end{itemize}

Define
\begin{equation}
\beta_{\lambda}:H\rightarrow\beta_{\lambda}\left(  H\right)  \subseteq
\mathfrak{n}^{\ast} \label{Beta}%
\end{equation}
such that
\begin{equation}
\beta_{\lambda}\left(  h\right)  =Ad\left(  h^{-1}\right)  ^{\ast}\lambda.
\label{beta}%
\end{equation}

We make the following additional assumption: The smooth function $h\mapsto
Ad\left(  h^{-1}\right)  ^{\ast}\lambda$ is an immersion at the identity of
$H$. This implies that $n=\dim N\geq r=\dim H.$ We make no assumption on the
irreducibility of $\pi,$ nor shall we assume that $\pi$ is integrable or even
square-integrable (see Example \ref{solv}). In fact, as soon as the structure
constants of the Lie algebra $\mathfrak{g}$ are determined, the immersion
condition imposed on the function $h\mapsto Ad\left(  h^{-1}\right)  ^{\ast
}\lambda$ is easily verified (see Proposition \ref{1}.) To this end, it
suffices to show that the differential of the map $h\mapsto Ad\left(
h^{-1}\right)  ^{\ast}\lambda$ has a trivial null-space at the identity of
$H$. Letting $r=\dim\left(  H\right)  $, there is a smooth chart
\begin{equation}
\left(  \mathcal{O},\varphi\right)  \label{Chart}%
\end{equation}
such that $\mathcal{O}\subset H$ is an open set around the identity of $H$,
and $\varphi:\mathcal{O}\rightarrow\varphi\left(  \mathcal{O}\right)
\subseteq\mathfrak{h}$ is a diffeomorphism taking the identity in $H$ to the
zero element in its Lie algebra such that $\beta_{\lambda}\circ\varphi^{-1}$
has a constant rank equal to $r.$ Since the exponential map defines a local
diffeomorphism between an open set containing the zero vector in
$\mathfrak{h}$ and a suitable open set around the neutral element in $H,$ for
a sufficiently small neighborhood of the identity, the function $\varphi$ can
be taken to be the $\log$ function. Moving forward, the Jacobian of $h\mapsto
Ad\left(  h^{-1}\right)  ^{\ast}\lambda$ in local coordinates takes the form
\begin{equation}
D=\mathrm{Jac}_{\beta_{\lambda}\circ\varphi^{-1}}\left(  0\right)  =\left[
\dfrac{\partial\left(  \beta_{\lambda}\circ\varphi^{-1}\right)  _{j}\left(
A\right)  }{\partial A_{k}}\left(  0\right)  \right]  _{1\leq j\leq n,1\leq
k\leq r}%
\end{equation}
and the following is immediate.

\begin{proposition}
\label{1}$h\mapsto Ad\left(  h^{-1}\right)  ^{\ast}\lambda$ is an immersion at
the identity of $H$ if and only if $\det\left(  D^{T}D\right)  $ is nonzero.
\end{proposition}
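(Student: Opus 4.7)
The plan is to reduce the statement to the standard linear algebra fact that a real rectangular matrix $D$ of size $n\times r$ has full column rank if and only if its Gram matrix $D^{T}D$ is invertible. The only content beyond this observation is to verify that the Jacobian $D$ displayed in the proposition indeed represents the differential of $\beta_{\lambda}$ at the identity, modulo an identification coming from the chart $\varphi$.

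First I would recall that by definition, a smooth map $F:M\to N$ between manifolds is an immersion at a point $p\in M$ if and only if its differential $dF_{p}:T_{p}M\to T_{F(p)}N$ is injective. Applying this to $F=\beta_{\lambda}$ at $p=e\in H$, the immersion condition is equivalent to injectivity of $d\beta_{\lambda}|_{e}:T_{e}H\to T_{\lambda}\mathfrak{n}^{\ast}\cong\mathfrak{n}^{\ast}$. Since $\varphi:\mathcal{O}\to\varphi(\mathcal{O})\subseteq\mathfrak{h}$ is a diffeomorphism with $\varphi(e)=0$, the differential $d\varphi|_{e}:T_{e}H\to\mathfrak{h}$ is a linear isomorphism, and hence $d\beta_{\lambda}|_{e}$ is injective if and only if the composition $d(\beta_{\lambda}\circ\varphi^{-1})|_{0}=d\beta_{\lambda}|_{e}\circ(d\varphi|_{e})^{-1}$ is injective. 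In the global coordinate bases $\{A_{1},\dots,A_{r}\}$ of $\mathfrak{h}$ and $\{X_{1}^{\ast},\dots,X_{n}^{\ast}\}$ of $\mathfrak{n}^{\ast}$, this differential is represented by the $n\times r$ matrix $D$ displayed just before the statement of the proposition.

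Consequently the immersion condition at $e$ is equivalent to $\ker D=\{0\}$, i.e.\ to $D$ having full column rank $r$. Here I invoke the standard fact: for any real $n\times r$ matrix $D$ one has $\ker D=\ker(D^{T}D)$. The inclusion $\ker D\subseteq\ker(D^{T}D)$ is immediate, while if $D^{T}Dv=0$ then
\[
0=v^{T}D^{T}Dv=\|Dv\|^{2},
\]
so $Dv=0$. Therefore $D$ has full column rank if and only if the $r\times r$ symmetric matrix $D^{T}D$ is injective, which for a square matrix is equivalent to $\det(D^{T}D)\neq 0$.

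There is essentially no serious obstacle here: the proof is a bookkeeping step (identifying the coordinate expression of the differential with $D$) followed by the Gram-matrix rank criterion. The only point requiring a sentence of justification is that the chart $\varphi$ (which can be taken to be $\log$ on a small enough neighborhood, as noted in the excerpt) induces a linear isomorphism at the level of tangent spaces, so passing from $\beta_{\lambda}$ to $\beta_{\lambda}\circ\varphi^{-1}$ preserves the immersion property at the distinguished point.
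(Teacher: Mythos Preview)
Your proof is correct and follows essentially the same approach as the paper's own proof: both reduce the immersion condition to the injectivity of the matrix $D$ and then invoke the Gram-matrix criterion $\ker D=\{0\}\iff\det(D^{T}D)\neq 0$. Your version is simply more explicit about the chart identification and about why $\ker D=\ker(D^{T}D)$, whereas the paper records these as one-line observations.
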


\begin{proof}
By definition, $h\mapsto Ad\left(  h^{-1}\right)  ^{\ast}\lambda$ is an
immersion at the identity of $H$ if and only if the matrix $D$ has trivial
kernel. Finally, the observation that $D$ has trivial kernel if and only if
its gramian matrix $D^{T}D$ is invertible gives the desired result.
\end{proof}

\begin{proposition}
\label{2}Let $\omega\in\mathfrak{n}^{\ast}.$ Fixing $\left\{  X_{1}%
,X_{2},\cdots,X_{n}\right\}  $ as a basis for $\mathfrak{n},$ and $\left\{
A_{1},A_{2},\cdots,A_{r}\right\}  $ as a basis for $\mathfrak{h}$, then
\[
D_{\omega}=\left[
\begin{array}
[c]{ccc}%
\left\langle \omega,\left[  X_{1},A_{1}\right]  \right\rangle  & \cdots &
\left\langle \omega,\left[  X_{1},A_{r}\right]  \right\rangle \\
\vdots & \ddots & \vdots\\
\left\langle \omega,\left[  X_{n},A_{1}\right]  \right\rangle  & \cdots &
\left\langle \omega,\left[  X_{n},A_{r}\right]  \right\rangle
\end{array}
\right]
\]
is a matrix representation of $\mathrm{Jac}_{\beta_{\omega}\circ\varphi^{-1}%
}\left(  0\right)  $ with respect to the fixed bases.
\end{proposition}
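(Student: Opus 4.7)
The plan is to compute the partial derivatives of the coordinate functions of $\beta_\omega \circ \varphi^{-1}$ at the origin directly, exploiting the fact that near the identity we may take $\varphi = \log$ and therefore work with the exponential parametrization. Writing $A = \sum_{k=1}^{r} a_k A_k$ for the coordinates on $\mathfrak{h}$ with respect to the basis $\{A_1,\dots,A_r\}$, we have $\varphi^{-1}(a_1,\dots,a_r) = \exp(A)$, and thus
\[
(\beta_\omega \circ \varphi^{-1})(a_1,\dots,a_r) = Ad\bigl(\exp(-A)\bigr)^{\ast}\omega.
\]
The $j$-th coordinate of this vector in the dual basis $\{X_1^{\ast},\dots,X_n^{\ast}\}$ is the scalar $\langle Ad(\exp(-A))^{\ast}\omega,\,X_j\rangle = \langle \omega,\,Ad(\exp(-A))X_j\rangle$.

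Next, I would invoke the standard identity $Ad(\exp Y) = e^{\mathrm{ad}(Y)}$, valid for any $Y$ in the Lie algebra of $G$. This yields
\[
(\beta_\omega \circ \varphi^{-1})_j(a_1,\dots,a_r) = \bigl\langle \omega,\, e^{-\mathrm{ad}(A)}X_j\bigr\rangle.
\]
Differentiating the power-series expansion $e^{-\mathrm{ad}(A)}X_j = X_j - [A,X_j] + O(|A|^2)$ with respect to $a_k$ and evaluating at $a=0$ gives
\[
\frac{\partial (\beta_\omega\circ \varphi^{-1})_j}{\partial a_k}(0)
= -\bigl\langle \omega,\,[A_k,X_j]\bigr\rangle
= \bigl\langle \omega,\,[X_j,A_k]\bigr\rangle,
\]
where the last equality uses the antisymmetry of the Lie bracket. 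This is exactly the $(j,k)$ entry of $D_\omega$ as claimed.

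The computation is essentially routine; the only subtle point—and hence the only real place to slip up—is tracking the sign coming from the inverse in the definition $\beta_\omega(h) = Ad(h^{-1})^{\ast}\omega$. The $h^{-1}$ produces a minus sign in the exponent of $\mathrm{ad}$, and this minus sign is then reabsorbed by the antisymmetry of the bracket to produce $[X_j,A_k]$ rather than $[A_k,X_j]$. I would therefore write out that sign-tracking step explicitly in the final proof so the reader sees why the entries appear in this particular order. No compactness or convergence issues arise because we only need the linear term of the power series, so the termwise differentiation at $A=0$ is trivially justified.
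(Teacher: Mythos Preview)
Your proof is correct and follows essentially the same approach as the paper's own proof: both compute the derivative of $\langle Ad(\exp(-A))^{\ast}\omega, X_j\rangle = \langle \omega, e^{-\mathrm{ad}(A)}X_j\rangle$ by expanding the exponential series, keeping only the linear term, and then using antisymmetry of the bracket to get the sign right. The paper phrases this as a one-parameter limit $t\to 0$ along each direction $A_k$, whereas you differentiate directly in the coordinates $a_k$, but the content is identical.
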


\begin{proof}
Given $t\in\mathbb{R}$, $A\in\mathfrak{h}$ and $X\in\mathfrak{n,}$ we have
\[
\left\langle \lim_{t\rightarrow0}\frac{\left[  Ad\left(  \exp\left(
-tA\right)  \right)  \right]  ^{\ast}\omega-\omega}{t},X\right\rangle
=\lim_{t\rightarrow0}\frac{\left\langle \left[  Ad\left(  \exp\left(
-tA\right)  \right)  \right]  ^{\ast}\omega-\omega,X\right\rangle }{t}.
\]
Next, since
\[
\left\langle \omega,Ad\left(  \exp\left(  -tA\right)  \right)  X\right\rangle
=\left\langle \omega,\sum_{k=0}^{\infty}ad\left(  -tA\right)  ^{k}%
X\right\rangle
\]
the following holds true
\begin{align*}
\left\langle \lim_{t\rightarrow0}\frac{\left[  Ad\left(  \exp\left(
-tA\right)  \right)  \right]  ^{\ast}\omega-\omega}{t},X\right\rangle  &
=\lim_{t\rightarrow0}\frac{\left\langle \omega,Ad\left(  \exp\left(
-tA\right)  \right)  X\right\rangle }{t}-\lim_{t\rightarrow0}\frac
{\left\langle \omega,X\right\rangle }{t}\\
&  =\lim_{t\rightarrow0}\frac{\left\langle \omega,\sum_{k=1}^{\infty}ad\left(
-tA\right)  ^{k}X\right\rangle }{t}\\
&  =\left(  \ast\right)
\end{align*}
Observing that
\[
\sum_{k=1}^{\infty}ad\left(  -tA\right)  ^{k}X=ad\left(  -tA\right)
X+\sum_{k=2}^{\infty}ad\left(  -tA\right)  ^{k}X,
\]
we obtain
\begin{align*}
\left(  \ast\right)   &  =\left(  \lim_{t\rightarrow0}\frac{\left\langle
\omega,ad\left(  -tA\right)  X\right\rangle }{t}\right)  +\underset
{=0}{\underbrace{\left(  \lim_{t\rightarrow0}\frac{\left\langle \omega
,\sum_{k=2}^{\infty}\left(  -t\right)  ^{k}ad\left(  A\right)  X\right\rangle
}{t}\right)  }}\\
&  =-\left\langle \omega,\left[  A,X\right]  \right\rangle =\left\langle
\omega,\left[  X,A\right]  \right\rangle
\end{align*}
and this gives the desired result.
\end{proof}

\begin{proposition}
The set of linear functionals $\omega$ for which $\beta_{\omega}$ is a local
immersion at the identity in $H$ is a Zariski open subset of $\mathfrak{n}%
^{\ast}.$
\end{proposition}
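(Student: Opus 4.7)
The plan is to leverage Propositions \ref{1} and \ref{2} directly: both supply the explicit algebraic description of the immersion condition in terms of a matrix whose entries depend linearly on $\omega$, after which Zariski openness is immediate.

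First I would observe that, after expanding $\omega \in \mathfrak{n}^{\ast}$ in the dual basis as $\omega = \sum_{k=1}^{n} c_{k}(\omega)\, X_{k}^{\ast}$, each entry of the matrix $D_{\omega}$ in Proposition \ref{2} takes the form
\[
\langle \omega, [X_{i}, A_{j}] \rangle \;=\; \sum_{k=1}^{n} c_{k}(\omega)\, \langle X_{k}^{\ast}, [X_{i}, A_{j}] \rangle,
\]
which is a linear function of the coordinates $(c_{1}(\omega), \dots, c_{n}(\omega))$ with coefficients determined entirely by the structure constants of $\mathfrak{g}$. Consequently every entry of the $r \times r$ gramian matrix $D_{\omega}^{T} D_{\omega}$ is a homogeneous quadratic polynomial in these coordinates, and its determinant
\[
P(\omega) \;:=\; \det\!\bigl(D_{\omega}^{T} D_{\omega}\bigr)
\]
is a polynomial function of $\omega$ of degree $2r$ on $\mathfrak{n}^{\ast}$.

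Next I would invoke Proposition \ref{1}, applied with $\omega$ in place of $\lambda$: the map $\beta_{\omega}$ is an immersion at the identity of $H$ if and only if $D_{\omega}$ has trivial kernel, equivalently $P(\omega) \neq 0$. Therefore the set in question is exactly
\[
\bigl\{\, \omega \in \mathfrak{n}^{\ast} \;:\; P(\omega) \neq 0 \,\bigr\},
\]
which is the complement in $\mathfrak{n}^{\ast}$ of the zero locus of a single polynomial, hence Zariski open by definition.

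There is really no serious obstacle here beyond bookkeeping; the only point demanding a moment of care is confirming that the polynomial identity $\det(D_{\omega}^{T} D_{\omega}) \neq 0$ is the correct algebraic surrogate for ``$D_{\omega}$ has trivial kernel,'' which is valid precisely because $D_{\omega}$ is a real matrix with $n \geq r$ and a real matrix has trivial kernel iff its gramian is invertible. One may additionally remark that the set is nonempty whenever the hypotheses of Theorem \ref{main} are satisfied, since the original functional $\lambda$ then lies in it.
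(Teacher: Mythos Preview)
Your proof is correct and follows essentially the same approach as the paper: both arguments invoke Propositions \ref{1} and \ref{2} to rewrite the immersion condition as the nonvanishing of $\det(D_\omega^T D_\omega)$, observe that the entries of $D_\omega$ are linear in $\omega$, and conclude that the set in question is the complement of the zero locus of a polynomial. Your write-up is slightly more explicit about the coordinate expansion and the degree of the resulting polynomial, but the underlying strategy is identical.
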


\begin{proof}
Let $\mathcal{Q}=\left\{  \omega\in\mathfrak{n}^{\ast}:\beta_{\omega}\text{ is
an immersion at the identity in }H\right\}  .$ To prove the stated result, it
suffices to establish that $\mathcal{Q}$ is the complement of a zero set of a
polynomial defined on $\mathfrak{n}^{\ast}.$ Put
\[
D\left(  \omega\right)  =\left[
\begin{array}
[c]{ccc}%
\left\langle \omega,\left[  X_{1},A_{1}\right]  \right\rangle  & \cdots &
\left\langle \omega,\left[  X_{1},A_{r}\right]  \right\rangle \\
\vdots & \ddots & \vdots\\
\left\langle \omega,\left[  X_{n},A_{1}\right]  \right\rangle  & \cdots &
\left\langle \omega,\left[  X_{n},A_{r}\right]  \right\rangle
\end{array}
\right]  \text{ and }p\left(  \omega\right)  =D\left(  \omega\right)  D\left(
\omega\right)  ^{T}.
\]
Clearly $p$ is a polynomial defined on $\mathfrak{n}^{\ast}.$ Appealing to
Proposition \ref{1} and Proposition \ref{2},
\begin{equation}
\mathcal{Q}=\left\{  \omega\in\mathfrak{n}^{\ast}:\det\left(  D\left(
\omega\right)  D\left(  \omega\right)  ^{T}\right)  \neq0\right\}  .
\end{equation}

\end{proof}

Since $n$ is greater than or equal to $r$ and $D$ has an invertible submatrix
of order $r$ then there is at least a subset $J$ of $\left\{  1,2,\cdots
,n\right\}  $ such that $\sharp\left(  J\right)  =r=\dim H$ and the submatrix
of $D$ obtained by retaining all rows of $D$ corresponding to the elements of
$J$ is an invertible square matrix of order $r.$ By the Inverse Function
Theorem (see Theorem 5.11, \cite{lee2003smooth}), there is a linear
projection
\begin{equation}
P=P_{J}:\mathfrak{n\rightarrow n} \label{projection}%
\end{equation}
of rank $r$ (depending on the set $J$) given by
\begin{equation}
P\left(  X_{k}\right)  =\left\{
\begin{array}
[c]{c}%
X_{k}\text{ if }k\in J\\
0\text{ if }k\notin J
\end{array}
\right.
\end{equation}
such that
\[
\Theta_{J,\lambda}=\Theta_{\lambda}=P^{\ast}\beta_{\lambda}\varphi
^{-1}:\varphi\left(  \mathcal{O}\right)  \rightarrow\Theta_{\lambda}\left(
\varphi\left(  \mathcal{O}\right)  \right)
\]
is a \textbf{local analytic diffeomorphism} at the zero element in
$\mathfrak{h.}$

From here on, we fix a set $J$ as described above. Generally, the set
$\mathcal{O}$ needs not be relatively compact. However, it is always possible
to select a sufficiently small connected open and relatively compact subset
$\mathcal{O}_{\circ}\subset\mathcal{O}$ around the identity in $H$ such that

(a) The restriction $\left.  \Theta_{\lambda}\right\vert _{\varphi\left(
\mathcal{O}_{\circ}\right)  }:\varphi\left(  \mathcal{O}_{\circ}\right)
\rightarrow\Theta_{\lambda}\left(  \varphi\left(  \mathcal{O}_{\circ}\right)
\right)  $ of $\Theta_{\lambda}$ to $\varphi\left(  \mathcal{O}_{\circ
}\right)  $ is a diffeomorphism.

(b) $\Theta_{\lambda}\left(  \varphi\left(  \mathcal{O}_{\circ}\right)
\right)  $ is relatively compact in $P^{\ast}\left(  \mathfrak{n}^{\ast
}\right)  .$

To avoid cluster of notation, for the remainder of this work, we simply
replace $\mathcal{O}_{\circ}$ with $\mathcal{O}$. The following diagram
summarizes the connection between the maps $\beta_{\lambda},P^{\ast},\varphi$
and $\left.  \Theta_{\lambda}\right\vert _{\mathcal{O}}$ defined above

\begin{center}%
\begin{equation}
\text{\includegraphics[scale=0.35]{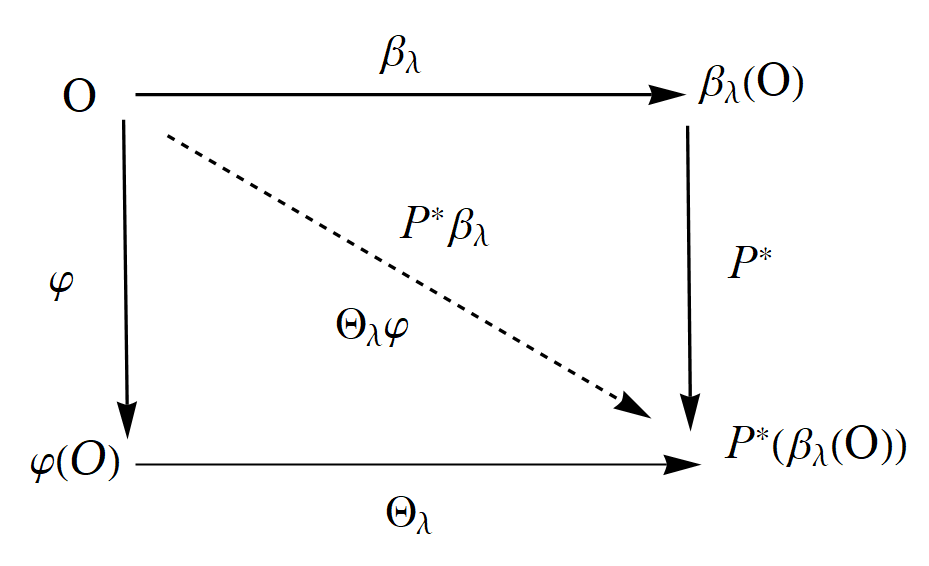}} \label{diagram}%
\end{equation}

\end{center}

\begin{proposition}
\label{discrete} The smooth function $h\mapsto Ad\left(  h^{-1}\right)
^{\ast}\lambda$ is an immersion at the identity of $H$ if and only if
$H_{\lambda}=\left\{  h\in H:Ad\left(  h^{-1}\right)  ^{\ast}\lambda
=\lambda\right\}  $ is a discrete subgroup of $H.$
\end{proposition}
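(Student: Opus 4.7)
The plan is to identify the kernel of the differential $d\beta_{\lambda}|_{e}$ with the Lie algebra $\mathfrak{h}_{\lambda}$ of the stabilizer $H_{\lambda}$, and then use the standard fact that a closed subgroup of $H$ is discrete if and only if its Lie algebra vanishes.

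First, I would note that $H_{\lambda}$ is closed in $H$: it is the preimage of the closed set $\{\lambda\}$ under the continuous map $\beta_{\lambda}$. Hence $H_{\lambda}$ is an embedded Lie subgroup, and
\[
\mathfrak{h}_{\lambda}=\bigl\{A\in\mathfrak{h}\,:\,\exp(tA)\in H_{\lambda}\text{ for all }t\in\mathbb{R}\bigr\}.
\]
Being discrete is equivalent to $\mathfrak{h}_{\lambda}=\{0\}$.

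Next, I would compute the differential. Using the computation already carried out in Proposition \ref{2}, for every $A\in\mathfrak{h}$ and every $X\in\mathfrak{n}$,
\[
\bigl\langle d\beta_{\lambda}|_{e}(A),\,X\bigr\rangle=\bigl\langle\lambda,[X,A]\bigr\rangle.
\]
The inclusion $\mathfrak{h}_{\lambda}\subseteq\ker d\beta_{\lambda}|_{e}$ is immediate: if $A\in\mathfrak{h}_{\lambda}$, then $\beta_{\lambda}(\exp(tA))=\lambda$ for every $t$, so differentiating at $t=0$ gives $d\beta_{\lambda}|_{e}(A)=0$.

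The reverse inclusion is where the one delicate point lies, and it is handled by exploiting the linearity of the representation $h\mapsto Ad(h^{-1})^{\ast}$. Suppose $A\in\ker d\beta_{\lambda}|_{e}$, and set $\mu(t)=Ad(\exp(-tA))^{\ast}\lambda$. Because the action is linear, $\mu$ is the solution of the linear ODE $\mu'(t)=\mathcal{A}\mu(t)$ on $\mathfrak{n}^{\ast}$, where $\mathcal{A}=-\bigl(ad(A)\bigr)^{\ast}$, with initial condition $\mu(0)=\lambda$. The hypothesis $d\beta_{\lambda}|_{e}(A)=0$ reads $\mathcal{A}\lambda=0$, so $\lambda$ is a stationary point of the flow and therefore $\mu(t)=\lambda$ for every $t\in\mathbb{R}$. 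Consequently $\exp(tA)\in H_{\lambda}$ for all $t$, i.e.\ $A\in\mathfrak{h}_{\lambda}$. This establishes $\ker d\beta_{\lambda}|_{e}=\mathfrak{h}_{\lambda}$.

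Combining everything, $\beta_{\lambda}$ is an immersion at the identity if and only if $d\beta_{\lambda}|_{e}$ is injective, if and only if $\mathfrak{h}_{\lambda}=\{0\}$, if and only if the closed subgroup $H_{\lambda}$ is discrete. The only non-routine step is the linear-ODE argument used to integrate the infinitesimal stabilizer condition back to the group level; this is exactly where the linearity of the coadjoint-type action $h\mapsto Ad(h^{-1})^{\ast}$ is used in an essential way.
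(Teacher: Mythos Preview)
Your proof is correct and takes a genuinely different route from the paper's. The paper invokes the orbit theorem (Theorem 7.19 in Lee's \emph{Introduction to Smooth Manifolds}): the map $F:H/H_{\lambda}\to\beta_{\lambda}(H)$, $hH_{\lambda}\mapsto Ad(h^{-1})^{\ast}\lambda$, is an equivariant diffeomorphism, so $\dim\beta_{\lambda}(H)=\dim(H/H_{\lambda})$, and both directions then follow by dimension counting. Your argument is more elementary and self-contained: you identify $\ker d\beta_{\lambda}|_{e}$ with $\mathfrak{h}_{\lambda}$ directly, the nontrivial inclusion being handled by the observation that $t\mapsto Ad(\exp(-tA))^{\ast}\lambda$ solves a linear ODE on $\mathfrak{n}^{\ast}$ for which $\lambda$ is a stationary point once $d\beta_{\lambda}|_{e}(A)=0$. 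The orbit-theorem approach situates the result in the general framework of smooth group actions and would extend verbatim to nonlinear actions; your ODE argument, by contrast, avoids the black box entirely but leans essentially on the linearity of the coadjoint-type action, as you correctly flag. Both are clean; yours has the virtue of making explicit exactly where linearity is consumed.
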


\begin{proof}
The isotropy group $H_{\lambda}$ is a closed subgroup of $H$ and the map
$F:H/H_{\lambda}\rightarrow\beta_{\lambda}\left(  H\right)  $ defined by
$F\left(  hH_{\lambda}\right)  =Ad\left(  h^{-1}\right)  ^{\ast}\lambda$ is an
equivariant diffeomorphism (Theorem 7.19, \cite{lee2003smooth}.) As such,
$\dim\left(  H/H_{\lambda}\right)  =\dim\beta_{\lambda}\left(  H\right)  .$
Let us suppose that $h\mapsto Ad\left(  h^{-1}\right)  ^{\ast}\lambda$ is an
immersion at the identity of $H$ with rank $r.$ Then $r=\dim\left(
H/H_{\lambda}\right)  =\dim\beta_{\lambda}\left(  H\right)  =\dim H.$ This
implies that $H_{\lambda}$ is a discrete subgroup of $H$. Suppose on the other
hand that $H_{\lambda}$ is a discrete subgroup of $H$. Then there exists an
open set $O_{e}$ around the identity of $H$ such that $H_{\lambda}\cap O_{e}$
is trivial. Restricting $h\mapsto Ad\left(  h^{-1}\right)  ^{\ast}\lambda$ to
such an open set, it is then clear that $h\mapsto Ad\left(  h^{-1}\right)
^{\ast}\lambda$ must be an immersion at the identity of $H.$
\end{proof}

The purpose of the results presented below is to set the stage for the proofs
of our main theorems. Our primary objective is to prove that given a
continuous function $\mathbf{f}$ supported on $\mathcal{O}$, it is possible to
find a countable set $\Gamma\subset G$ such that $\left\{  \pi\left(
\gamma\right)  \mathbf{f}:\gamma\in\Gamma\right\}  $ is a frame for
$L^{2}\left(  H,d\mu_{H}\right)  .$ To this end, we will need to show that
there exist positive constants $A,B$ such that%
\[
A\left\Vert \mathbf{g}\right\Vert ^{2}\leq\sum_{\gamma\in\Gamma}\left\vert
\left\langle \mathbf{g,}\pi\left(  \gamma\right)  \mathbf{f}\right\rangle
\right\vert ^{2}\leq B\left\Vert \mathbf{g}\right\Vert ^{2}%
\]
for all $\mathbf{g\in}L^{2}\left(  H,d\mu_{H}\right)  .$ We will be mainly
interested in countable sets of the type $\Gamma=\Gamma_{H}\Gamma_{N}$ where
$\Gamma_{H}$ and $\Gamma_{N}$ are some discrete subsets of $H$ and
$\exp\left(  P\mathfrak{n}\right)  \subseteq N$ respectively. Since we are
committed to providing a unified discretization scheme for the class of
representations described above, the following observation is in order.

\begin{remark}
Generally, it is important to choose our sampling set to be of the form
$\Gamma_{H}\Gamma_{N}$ as opposed to $\Gamma_{N}\Gamma_{H}$ (which may at
first appear to be a more natural ordering). To see this, let us consider the
case of the ax+b Lie group which we realize as follows. Let
\[
G=NH=\left\{  \left[
\begin{array}
[c]{cc}%
e^{t} & x\\
0 & 1
\end{array}
\right]  :t,x\in%
%TCIMACRO{\U{211d} }%
%BeginExpansion
\mathbb{R}
%EndExpansion
\right\}
\]
be the ax+b group with Lie algebra $\mathfrak{g}$ spanned by
\[
X=\left[
\begin{array}
[c]{cc}%
0 & 1\\
0 & 0
\end{array}
\right]  ,A=\left[
\begin{array}
[c]{cc}%
1 & 0\\
0 & 0
\end{array}
\right]  .
\]
The only non-trivial Lie bracket of $\mathfrak{g}$ is given by $\left[
A,X\right]  =X.$ Next, let $\lambda=X_{1}^{\ast}$. We consider the induced
representation $\pi_{\lambda}=\mathrm{ind}_{N}^{NH}\left(  \chi_{\lambda
}\right)  $ which we realize as acting in the Hilbert space $L^{2}\left(
%TCIMACRO{\U{211d} }%
%BeginExpansion
\mathbb{R}
%EndExpansion
\right)  =L^{2}\left(  H\right)  $ as follows%
\[
\pi_{\lambda}\left(  \exp xX\right)  \mathbf{f}\left(  t\right)  =e^{2\pi
ie^{-t}x}\cdot\mathbf{f}\left(  t\right)  \text{ and }\pi_{\lambda}\left(
\exp aA\right)  \mathbf{f}\left(  t\right)  =\mathbf{f}\left(  t-a\right)  .
\]
For some fixed positive real number $\epsilon,$ let $I=\left(  -\epsilon
,\epsilon\right)  $ be an interval in $%
%TCIMACRO{\U{211d} }%
%BeginExpansion
\mathbb{R}
%EndExpansion
.$ It is shown in \cite{oussa2018frames} that there exist a positive and a
continuous function $\mathbf{f}\in L^{2}\left(
%TCIMACRO{\U{211d} }%
%BeginExpansion
\mathbb{R}
%EndExpansion
\right)  $ supported on $I$ and some positive real numbers $\alpha,\delta$
such that $\left\{  \pi_{\lambda}\left(  \exp\kappa A\right)  \pi_{\lambda
}\left(  \exp jX\right)  \mathbf{f}:j\in\alpha%
%TCIMACRO{\U{2124} }%
%BeginExpansion
\mathbb{Z}
%EndExpansion
,\kappa\in\delta%
%TCIMACRO{\U{2124} }%
%BeginExpansion
\mathbb{Z}
%EndExpansion
\right\}  $ is a frame for $L^{2}\left(
%TCIMACRO{\U{211d} }%
%BeginExpansion
\mathbb{R}
%EndExpansion
\right)  .$ Next, we consider the following system
\[
\left\{  \pi_{\lambda}\left(  \exp jX\right)  \pi_{\lambda}\left(  \exp\kappa
A\right)  \mathbf{f}:j\in\alpha%
%TCIMACRO{\U{2124} }%
%BeginExpansion
\mathbb{Z}
%EndExpansion
,\kappa\in\delta%
%TCIMACRO{\U{2124} }%
%BeginExpansion
\mathbb{Z}
%EndExpansion
\right\}
\]
obtained by changing the order of the original sampling set. For
$\mathbf{h\in}L^{2}\left(
%TCIMACRO{\U{211d} }%
%BeginExpansion
\mathbb{R}
%EndExpansion
\right)  ,$ we have
\[%
%TCIMACRO{\dsum \limits_{j\in\alpha\mathbb{Z}}}%
%BeginExpansion
{\displaystyle\sum\limits_{j\in\alpha\mathbb{Z}}}
%EndExpansion%
%TCIMACRO{\dsum \limits_{\kappa\in\delta\mathbb{Z}}}%
%BeginExpansion
{\displaystyle\sum\limits_{\kappa\in\delta\mathbb{Z}}}
%EndExpansion
\left\vert \left\langle \mathbf{h,}\pi_{\lambda}\left(  \exp jX\right)
\pi_{\lambda}\left(  \exp\kappa A\right)  \mathbf{f}\right\rangle \right\vert
^{2}=%
%TCIMACRO{\dsum \limits_{j\in\alpha\mathbb{Z}}}%
%BeginExpansion
{\displaystyle\sum\limits_{j\in\alpha\mathbb{Z}}}
%EndExpansion%
%TCIMACRO{\dsum \limits_{\kappa\in\delta\mathbb{Z}}}%
%BeginExpansion
{\displaystyle\sum\limits_{\kappa\in\delta\mathbb{Z}}}
%EndExpansion
\left\vert \int_{\kappa+I}\mathbf{h}\left(  t\right)  e^{-2\pi ie^{-t}j}%
\cdot\overline{\mathbf{f}\left(  t-\kappa\right)  }dt\right\vert ^{2}.
\]
The change of variable $\xi=e^{-t}$ gives
\begin{align*}
&
%TCIMACRO{\dsum \limits_{j\in\alpha\mathbb{Z}}}%
%BeginExpansion
{\displaystyle\sum\limits_{j\in\alpha\mathbb{Z}}}
%EndExpansion%
%TCIMACRO{\dsum \limits_{\kappa\in\delta\mathbb{Z}}}%
%BeginExpansion
{\displaystyle\sum\limits_{\kappa\in\delta\mathbb{Z}}}
%EndExpansion
\left\vert \left\langle \mathbf{h,}\pi_{\lambda}\left(  \exp jX\right)
\pi_{\lambda}\left(  \exp\kappa A\right)  \mathbf{f}\right\rangle \right\vert
^{2}\\
&  =%
%TCIMACRO{\dsum \limits_{j\in\alpha\mathbb{Z}}}%
%BeginExpansion
{\displaystyle\sum\limits_{j\in\alpha\mathbb{Z}}}
%EndExpansion%
%TCIMACRO{\dsum \limits_{\kappa\in\delta\mathbb{Z}}}%
%BeginExpansion
{\displaystyle\sum\limits_{\kappa\in\delta\mathbb{Z}}}
%EndExpansion
\left\vert \int_{e^{\kappa+I}}\frac{\mathbf{h}\left(  \log\left(  \xi
^{-1}\right)  \right)  \cdot\overline{\mathbf{f}\left(  \log\left(  \xi
^{-1}\right)  -\kappa\right)  }}{\xi}e^{-2\pi i\xi j}d\xi\right\vert ^{2}.
\end{align*}
Moreover, suppose that $\mathbf{h}$ is a nonzero function which is compactly
supported and additionally satisfies the following conditions: (1)
\begin{align*}
&
%TCIMACRO{\dsum \limits_{j\in\alpha\mathbb{Z}}}%
%BeginExpansion
{\displaystyle\sum\limits_{j\in\alpha\mathbb{Z}}}
%EndExpansion%
%TCIMACRO{\dsum \limits_{\kappa\in\delta\mathbb{Z}}}%
%BeginExpansion
{\displaystyle\sum\limits_{\kappa\in\delta\mathbb{Z}}}
%EndExpansion
\left\vert \int_{e^{\kappa+I}}\frac{\mathbf{h}\left(  \log\left(  \xi
^{-1}\right)  \right)  \cdot\overline{\mathbf{f}\left(  \log\left(  \xi
^{-1}\right)  -\kappa\right)  }}{\xi}e^{-2\pi i\xi j}d\xi\right\vert ^{2}\\
&  =%
%TCIMACRO{\dsum \limits_{j\in\alpha\mathbb{Z}}}%
%BeginExpansion
{\displaystyle\sum\limits_{j\in\alpha\mathbb{Z}}}
%EndExpansion
\left\vert \int_{e^{\kappa_{0}+I}}\frac{\mathbf{h}\left(  \log\left(  \xi
^{-1}\right)  \right)  \cdot\overline{\mathbf{f}\left(  \log\left(  \xi
^{-1}\right)  -\kappa_{0}\right)  }}{\xi}e^{-2\pi i\xi j}d\xi\right\vert ^{2}%
\end{align*}
for some fixed element $\kappa_{0}\in\delta\mathbb{Z}.$ (2) The Lebesgue
measure of the set $e^{\kappa_{0}+I}$ is strictly larger than $\alpha^{-1}.$
Note that the second condition is due to the expansive property of the
exponential function $x\mapsto e^{x}$ on the real line. As such, the
collection of trigonometric exponentials $\left\{  e^{-2\pi i\xi j}:j\in\alpha%
%TCIMACRO{\U{2124} }%
%BeginExpansion
\mathbb{Z}
%EndExpansion
\right\}  $ cannot be total in $L^{2}\left(  e^{\kappa_{0}+I},d\xi\right)  $.
It is then possible to choose a nonzero function $\mathbf{h}\in L^{2}\left(
%TCIMACRO{\U{211d} }%
%BeginExpansion
\mathbb{R}
%EndExpansion
\right)  $ such that
\[
\int_{e^{\kappa_{0}+I}}\frac{\mathbf{h}\left(  \log\left(  \xi^{-1}\right)
\right)  \cdot\overline{\mathbf{f}\left(  \log\left(  \xi^{-1}\right)
-\kappa_{0}\right)  }}{\xi}e^{-2\pi i\xi j}d\xi=0
\]
for every integer $j.$ Fixing such a function $\mathbf{h,}$ we obtain
\[%
%TCIMACRO{\dsum \limits_{j\in\mathbb{Z}}}%
%BeginExpansion
{\displaystyle\sum\limits_{j\in\mathbb{Z}}}
%EndExpansion%
%TCIMACRO{\dsum \limits_{\kappa\in\mathbb{Z}}}%
%BeginExpansion
{\displaystyle\sum\limits_{\kappa\in\mathbb{Z}}}
%EndExpansion
\left\vert \left\langle \mathbf{h,}\pi_{\lambda}\left(  \exp jX\right)
\pi_{\lambda}\left(  \exp\kappa A\right)  \mathbf{f}\right\rangle \right\vert
^{2}=0.
\]
Since $\mathbf{h}$ is not the zero function, we conclude that $\left\{
\pi_{\lambda}\left(  \exp jX\right)  \pi_{\lambda}\left(  \exp\kappa A\right)
\mathbf{f}:j\in\alpha%
%TCIMACRO{\U{2124} }%
%BeginExpansion
\mathbb{Z}
%EndExpansion
,\kappa\in\delta%
%TCIMACRO{\U{2124} }%
%BeginExpansion
\mathbb{Z}
%EndExpansion
\right\}  $ is not a frame for $L^{2}\left(
%TCIMACRO{\U{211d} }%
%BeginExpansion
\mathbb{R}
%EndExpansion
\right)  .$
\end{remark}

Given $\mathbf{g}$ and $\mathbf{f}$ in $L^{2}\left(  H,d\mu_{H}\right)  ,$ we
formally define
\begin{equation}
\mathrm{s}\left(  \Gamma_{H},\Gamma_{N},\mathbf{g,f}\right)  =\sum_{\ell
\in\Gamma_{H}}\sum_{\exp PX\in\Gamma_{N}}\left\vert \left\langle
\mathbf{g},\pi\left(  \ell^{-1}\right)  \pi\left(  \exp\left(  PX\right)
\right)  \mathbf{f}\right\rangle _{\mathfrak{H}}\right\vert ^{2}.
\end{equation}
At this point, it is not clear for which quadruple $\left(  \Gamma_{H}%
,\Gamma_{N},\mathbf{g,f}\right)  $ should the series defined by $\mathrm{s}%
\left(  \Gamma_{H},\Gamma_{N},\mathbf{g,f}\right)  $ be convergent (or
divergent). Suppose that $\mathbf{f}$ is continuous and supported on
$\mathcal{O}$ and for the time being, we shall overlook all convergence issues
related to the definition of $\mathrm{s}\left(  \Gamma_{H},\Gamma
_{N},\mathbf{g,f}\right)  .$ Since
\[
\left[  \pi\left(  \exp\left(  X\right)  \right)  \right]  \mathbf{f}\left(
h\right)  =e^{2\pi i\left\langle Ad\left(  h^{-1}\right)  ^{\ast}%
\lambda,X\right\rangle }\mathbf{f}\left(  h\right)  \text{ and }\left[
\pi\left(  \ell\right)  \right]  \mathbf{f}\left(  h\right)  =\mathbf{f}%
\left(  \ell^{-1}h\right)  ,
\]
it is clear that in a formal sense, we have
\[
I_{\mathbf{g,f,}\ell,X}=\left\langle \mathbf{g},\pi\left(  \ell^{-1}\right)
\pi\left(  \exp\left(  PX\right)  \right)  \mathbf{f}\right\rangle
_{\mathfrak{H}}=\int_{\mathcal{O}}\mathbf{g}\left(  \ell^{-1}h\right)
e^{-2\pi i\left\langle Ad\left(  h^{-1}\right)  ^{\ast}\lambda,PX\right\rangle
}\overline{\mathbf{f}\left(  h\right)  }d\mu_{H}\left(  h\right)  .
\]
Recall that $\beta_{\lambda}:H\rightarrow\beta_{\lambda}\left(  H\right)
\subseteq\mathfrak{n}^{\ast}$ is defined as follows: $\ \beta_{\lambda}\left(
h\right)  =Ad\left(  h^{-1}\right)  ^{\ast}\lambda.$ Next, for each $h\in$
$\mathcal{O}$, there exists a unique element $A$ in $\varphi\left(
\mathcal{O}\right)  $ (which is a subset of the Lie algebra of $H$) such that
$h=\varphi^{-1}\left(  A\right)  $ and
\[
I_{\mathbf{g,f,}\ell,X}=\int_{\varphi\left(  \mathcal{O}\right)  }\left(
\mathbf{g}\left(  \ell^{-1}\varphi^{-1}\left(  A\right)  \right)  e^{-2\pi
i\left\langle \beta_{\lambda}\left(  \varphi^{-1}\left(  A\right)  \right)
,PX\right\rangle }\overline{\mathbf{f}\left(  \varphi^{-1}\left(  A\right)
\right)  }\right)  d\mu_{H}\left(  \varphi^{-1}\left(  A\right)  \right)  .
\]
Since $P$ is a linear projection, we obtain
\[
\left\langle \beta_{\lambda}\left(  \varphi^{-1}\left(  A\right)  \right)
,PX\right\rangle =\left\langle P^{\ast}\beta_{\lambda}\left(  \varphi
^{-1}\left(  A\right)  \right)  ,PX\right\rangle ;
\]
and as such,
\[
I_{\mathbf{g,f,}\ell,X}=\int_{\varphi\left(  \mathcal{O}\right)  }\left(
\mathbf{g}\left(  \ell^{-1}\varphi^{-1}\left(  A\right)  \right)
\overline{\mathbf{f}\left(  \varphi^{-1}\left(  A\right)  \right)  }\right)
e^{-2\pi i\left\langle P^{\ast}\beta_{\lambda}\left(  \varphi^{-1}\left(
A\right)  \right)  ,PX\right\rangle }d\mu_{H}\left(  \varphi^{-1}\left(
A\right)  \right)  .
\]
In local coordinates, the action of $P^{\ast}\beta_{\lambda}$ is given by the
diffeomorphism
\[
\Theta_{\lambda}=P^{\ast}\beta_{\lambda}\varphi^{-1}:\varphi\left(
\mathcal{O}\right)  \rightarrow\Theta_{\lambda}\left(  \varphi\left(
\mathcal{O}\right)  \right)
\]
and for every $A\in\varphi\left(  \mathcal{O}\right)  ,$ there exists a unique
element $\xi$ in $\Theta_{\lambda}\left(  \mathcal{O}\right)  \subset
\mathfrak{p}^{\ast}$ such that, given $A=\Theta_{\lambda}^{-1}\left(
\xi\right)  ,$ and
\[
F_{\mathbf{g,f}}\left(  \xi\right)  =\mathbf{g}\left(  \ell^{-1}\varphi
^{-1}\left(  \Theta_{\lambda}^{-1}\left(  \xi\right)  \right)  \right)
\overline{\mathbf{f}\left(  \varphi^{-1}\left(  \Theta_{\lambda}^{-1}\left(
\xi\right)  \right)  \right)  }%
\]
we have%
\[
I_{\mathbf{g,f,}\ell,X}=\int_{\Theta_{\lambda}\left(  \varphi\left(
\mathcal{O}\right)  \right)  }F_{\mathbf{g,f}}\left(  \xi\right)  e^{-2\pi
i\left\langle \xi,PX\right\rangle }d\mu_{H}\left(  \varphi^{-1}\left(
\Theta_{\lambda}^{-1}\left(  \xi\right)  \right)  \right)  .
\]
Note that $d\mu_{H}\left(  \varphi^{-1}\left(  \Theta_{\lambda}^{-1}\left(
\xi\right)  \right)  \right)  $ is a measure on $\Theta_{\lambda}\left(
\varphi\left(  \mathcal{O}\right)  \right)  .$ In fact, $\Theta_{\lambda
}\left(  \varphi\left(  \mathcal{O}\right)  \right)  $ is merely the
pushforward of the Haar measure on $H$ under the map $\Theta_{\lambda}%
\circ\varphi.$

Let $d\xi$ be the Lebesgue measure on $P^{\ast}\left(  \mathfrak{n}^{\ast
}\right)  .$ To simplify the integral $I_{\mathbf{g,f,}\ell,X}$, we will need
to compute the Radon-Nikodym derivative $\frac{d\mu_{H}\left(  \varphi
^{-1}\left(  \Theta_{\lambda}^{-1}\left(  \xi\right)  \right)  \right)  }%
{d\xi}.$ To this end, we introduce the following auxiliary function. Define
$\mathbf{W}_{\lambda}:\Theta_{\lambda}\left(  \varphi\left(  \mathcal{O}%
\right)  \right)  \rightarrow\left(  0,\infty\right)  $ such that
\begin{equation}
\mathbf{W}_{\lambda}\left(  \xi\right)  =\frac{d\mu_{H}\left(  \varphi
^{-1}\left(  \Theta_{\lambda}^{-1}\left(  \xi\right)  \right)  \right)  }%
{d\xi}. \label{W}%
\end{equation}
In light of the discussion above, the following is immediate.

\begin{lemma}
\label{LemmaS}Given discrete sets $\Gamma_{H}\subset H$, $\Gamma_{N}%
\subset\exp\left(  P\mathfrak{n}\right)  $, $\mathbf{g}$ and $\mathbf{f}$ in
$L^{2}\left(  H,d\mu_{H}\right)  $ such that $\mathbf{f}$ is a continuous
function supported in $\mathcal{O},$
\begin{align*}
&  \mathrm{s}\left(  \Gamma_{H},\Gamma_{N},\mathbf{g,f}\right) \\
&  =\sum_{\ell\in\Gamma_{H}}\sum_{\exp\left(  PX\right)  \in\Gamma_{N}%
}\left\vert \int_{\Theta_{\lambda}\left(  \varphi\left(  \mathcal{O}\right)
\right)  }\left[  \mathbf{g}\left(  \ell^{-1}\varphi^{-1}\left(
\Theta_{\lambda}^{-1}\left(  \xi\right)  \right)  \right)  \overline
{\mathbf{f}\left(  \varphi^{-1}\left(  \Theta_{\lambda}^{-1}\left(
\xi\right)  \right)  \right)  }\mathbf{W}_{\lambda}\left(  \xi\right)
\right]  e^{-2\pi i\left\langle \xi,PX\right\rangle }d\xi\right\vert ^{2}.
\end{align*}

\end{lemma}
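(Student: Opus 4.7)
The plan is to show that the displayed identity is nothing more than the formal calculation carried out in the paragraphs immediately preceding the lemma, made rigorous under the stated hypotheses. The key observation is that because $\mathbf{f}$ is continuous with support in the relatively compact set $\mathcal{O}$, every integrand that will appear is bounded, compactly supported, and measurable, so Fubini/change-of-variables theorems apply term by term and there are no convergence issues inside each summand (only the double sum remains, which is just a sum of nonnegative terms).

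First, I would start from the definition of $\mathrm{s}(\Gamma_H,\Gamma_N,\mathbf{g},\mathbf{f})$ and work on one summand $I_{\mathbf{g},\mathbf{f},\ell,X}=\langle \mathbf{g},\pi(\ell^{-1})\pi(\exp(PX))\mathbf{f}\rangle_{\mathfrak{H}}$. Using the two formulas in (\ref{irred}) describing the action of $\pi$ on $N$-elements and on $H$-elements respectively, the inner product unfolds into an integral over $H$ of
\[
\mathbf{g}(\ell^{-1}h)\,\overline{\mathbf{f}(h)}\,e^{-2\pi i\langle Ad(h^{-1})^{\ast}\lambda,PX\rangle}
\]
against $d\mu_H(h)$. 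Because $\mathbf{f}$ is supported in $\mathcal{O}$, the domain of integration can be restricted to $\mathcal{O}$, and the integrand is bounded and continuous there.

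Next, I would apply the chart $\varphi^{-1}\colon \varphi(\mathcal{O})\to\mathcal{O}$ as a smooth change of variables to transfer the integral to $\varphi(\mathcal{O})\subseteq\mathfrak{h}$, and use the identity $\langle \beta_{\lambda}(h),PX\rangle=\langle P^{\ast}\beta_{\lambda}(h),PX\rangle$, which follows directly from the definition of the adjoint and the fact that $P$ is a linear projection. After that, I would invoke the fact established above the lemma that $\Theta_{\lambda}=P^{\ast}\beta_{\lambda}\varphi^{-1}$ is a diffeomorphism from $\varphi(\mathcal{O})$ onto $\Theta_{\lambda}(\varphi(\mathcal{O}))$, and perform a second change of variables $A=\Theta_{\lambda}^{-1}(\xi)$ to rewrite the integral as one over $\Theta_{\lambda}(\varphi(\mathcal{O}))\subset\mathfrak{p}^{\ast}$ against the pushforward measure of $d\mu_H$ under $\Theta_{\lambda}\circ\varphi$.

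Finally, I would express this pushforward measure in terms of Lebesgue measure $d\xi$ on $P^{\ast}(\mathfrak{n}^{\ast})$ by inserting the Radon--Nikodym derivative $\mathbf{W}_{\lambda}(\xi)$ defined in (\ref{W}); this is legitimate because $\Theta_{\lambda}\circ\varphi$ restricted to $\mathcal{O}$ is a diffeomorphism onto a relatively compact open subset of $P^{\ast}(\mathfrak{n}^{\ast})$, so the pushforward is absolutely continuous with a smooth positive density. The result is precisely the formula for $I_{\mathbf{g},\mathbf{f},\ell,X}$ inside the absolute value in the statement. Squaring and summing over $\ell\in\Gamma_H$ and $\exp(PX)\in\Gamma_N$ then gives the claimed expression for $\mathrm{s}(\Gamma_H,\Gamma_N,\mathbf{g},\mathbf{f})$. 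The only subtlety — hardly an obstacle — is to observe that no convergence issue arises at the level of the individual summand, since the single integral is over a relatively compact set of a bounded continuous function, and that the outer double sum is a sum of nonnegative terms so its value in $[0,\infty]$ is unambiguous.
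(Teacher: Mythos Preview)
Your proposal is correct and follows essentially the same approach as the paper: the lemma is declared ``immediate'' from the discussion preceding it, which performs exactly the sequence of steps you outline (unfold the inner product using (\ref{irred}), restrict to $\mathcal{O}$, pass to $\varphi(\mathcal{O})$, use $\langle\beta_\lambda(h),PX\rangle=\langle P^\ast\beta_\lambda(h),PX\rangle$, change variables via $\Theta_\lambda$, and insert $\mathbf{W}_\lambda$). If anything, you have made the argument slightly more careful by explicitly noting why each change of variables is legitimate and why the outer double sum is unambiguous.
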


\begin{lemma}
\label{Jacob}$\mathbf{W}_{\lambda}$ is a strictly positive function defined on
$\Theta_{\lambda}\left(  \varphi\left(  \mathcal{O}\right)  \right)  $. More
precisely,
\[
\mathbf{W}_{\lambda}\left(  \xi\right)  =\left\vert
%TCIMACRO{\dprod \limits_{k=1}^{r}}%
%BeginExpansion
{\displaystyle\prod\limits_{k=1}^{r}}
%EndExpansion
v_{k}\left(  \xi\right)  \right\vert \cdot\left\vert \det\left[
\dfrac{\partial\left[  \Theta_{\lambda}^{-1}\left(  \xi\right)  \right]  _{j}%
}{\partial\xi_{k}}\right]  _{1\leq j,k\leq n}\right\vert
\]
where for $k\in\left\{  1,2,\cdots,r\right\}  ,$%
\[
v_{k}\left(  \xi\right)  =\left\{
\begin{array}
[c]{c}%
\dfrac{e^{w_{k}\left(  \xi\right)  }-1}{w_{k}\left(  \xi\right)  }\text{ if
}w_{k}\left(  \xi\right)  \neq0\\
1\text{ if }w_{k}\left(  \xi\right)  =0
\end{array}
\right.
\]
and $\left(  w_{k}\left(  \xi\right)  \right)  _{k=1}^{r}$ is a sequence of
eigenvalues for the endomorphism $-ad_{\mathfrak{h}}\left(  \Theta_{\lambda
}^{-1}\left(  \xi\right)  \right)  .$
\end{lemma}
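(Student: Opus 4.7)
My plan is to decompose $\mathbf{W}_{\lambda}(\xi)$ as a product of two Jacobian factors by writing the map $\xi\mapsto\varphi^{-1}(\Theta_{\lambda}^{-1}(\xi))$ as the composition of $\Theta_{\lambda}^{-1}\colon\Theta_{\lambda}(\varphi(\mathcal{O}))\to\varphi(\mathcal{O})\subseteq\mathfrak{h}$ followed by $\varphi^{-1}\colon\varphi(\mathcal{O})\to\mathcal{O}\subseteq H$. Since $\mathcal{O}$ is a small neighborhood of the identity, we may take $\varphi=\log$, so $\varphi^{-1}$ is the restriction of the exponential map of $H$. The first map contributes $|\det\mathrm{Jac}_{\Theta_{\lambda}^{-1}}(\xi)|$ to the density by the usual change-of-variables formula, and all the content reduces to computing the pullback of the left Haar measure on $H$ under $\exp_{H}$.

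For this I invoke the classical identity for the differential of the exponential map,
\[
d(\exp)_{A}=(dL_{\exp A})_{e}\circ\frac{\mathrm{id}-e^{-ad(A)}}{ad(A)},
\]
in which the operator on the right is interpreted through its entire power series $\sum_{k\geq 0}\frac{(-ad(A))^{k}}{(k+1)!}$. A left Haar measure on $H$ is the volume form of any fixed left-invariant frame, so pulling it back along $\exp_{H}$ multiplies Lebesgue measure $dA$ on $\mathfrak{h}$ by the absolute value of the determinant of the restriction of $\frac{\mathrm{id}-e^{-ad(A)}}{ad(A)}$ to $\mathfrak{h}$. Diagonalizing $-ad_{\mathfrak{h}}(A)$ over $\mathbb{C}$, its eigenvalues are $w_{1}(A),\ldots,w_{r}(A)$, and by the holomorphic functional calculus the corresponding eigenvalues of that restriction are $v_{k}(A)=(e^{w_{k}(A)}-1)/w_{k}(A)$, with the removable singularity at $0$ assigned the value $1$. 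Hence the determinant equals $\prod_{k=1}^{r}v_{k}(A)$.

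Combining the two factors with $A=\Theta_{\lambda}^{-1}(\xi)$ yields the stated formula for $\mathbf{W}_{\lambda}(\xi)$. Strict positivity follows from two observations: $\mathrm{Jac}_{\Theta_{\lambda}^{-1}}(\xi)$ is invertible because $\Theta_{\lambda}$ is a diffeomorphism on $\varphi(\mathcal{O})$ by construction; and by shrinking $\mathcal{O}$ if necessary, the spectrum of $-ad_{\mathfrak{h}}(A)$ can be kept inside the open disk of radius $2\pi$ about $0$, where the entire function $z\mapsto(e^{z}-1)/z$ is nowhere zero, so every $v_{k}$ is nonzero. Complex eigenvalues of a real operator occur in conjugate pairs, so $\prod v_{k}$ is in fact real, and taking the absolute value produces a strictly positive density as required.

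The main, rather mild, obstacle is notational bookkeeping: one must restrict $ad(A)$ to the subalgebra $\mathfrak{h}$ (permissible because $\mathfrak{h}$ is a Lie subalgebra of $\mathfrak{g}$) and carefully track the sign flip when passing from $ad(A)$ to $-ad(A)$, so that $(1-e^{-z})/z$ evaluated at an eigenvalue of the former becomes $(e^{w}-1)/w$ evaluated at the matching eigenvalue of the latter. Once this is aligned, the lemma is immediate from the two standard facts above.
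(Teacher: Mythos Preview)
Your proposal is correct and follows essentially the same approach as the paper: both decompose the density via the chain rule as the Jacobian of $\Theta_{\lambda}^{-1}$ times the pullback of left Haar measure under $\exp_{H}$, invoke the standard formula $d(\exp)_{A}=(dL_{\exp A})_{e}\circ\frac{\mathrm{id}-e^{-ad(A)}}{ad(A)}$ (the paper cites Faraut, Proposition 5.5.6), and then read off the eigenvalues $v_{k}$ from those of $-ad_{\mathfrak{h}}(A)$. The only cosmetic difference is in the positivity argument for $\prod v_{k}$: the paper simply asserts that $A(\xi)=\sum_{k\geq 0}\frac{(-ad_{\mathfrak{h}}(\Theta_{\lambda}^{-1}(\xi)))^{k}}{(k+1)!}$ is invertible on $\Theta_{\lambda}(\varphi(\mathcal{O}))$ with a citation to Duistermaat--Kolk, whereas you spell out the spectral reason (eigenvalues confined to the disk of radius $2\pi$ after possibly shrinking $\mathcal{O}$), which amounts to the same thing.
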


\begin{proof}
Referring back to the diagram given in Figure \ref{diagram}, we proceed as
follows. First, let $\mathbf{f}$ be a positive function compactly supported on
$\mathcal{O}$. Given $h\in\mathcal{O},$ since there exists a unique
$A\in\varphi\left(  \mathcal{O}\right)  \subset\mathfrak{h}$ such that
$h=\varphi^{-1}\left(  A\right)  ,$ we have
\[
\int_{\mathcal{O}}\mathbf{f}\left(  h\right)  d\mu_{H}\left(  h\right)
=\int_{\varphi\left(  \mathcal{O}\right)  }\mathbf{f}\left(  \varphi
^{-1}\left(  A\right)  \right)  \cdot d\mu_{H}\left(  \varphi^{-1}\left(
A\right)  \right)  .
\]
Next,
\[
\int_{\mathcal{O}}\mathbf{f}\left(  h\right)  d\mu_{H}\left(  h\right)
=\int_{\Theta_{\lambda}\left(  \varphi\left(  \mathcal{O}\right)  \right)
}\mathbf{f}\left(  \varphi^{-1}\left(  \Theta_{\lambda}^{-1}\left(
\xi\right)  \right)  \right)  d\mu_{H}\left(  \varphi^{-1}\left(
\Theta_{\lambda}^{-1}\left(  \xi\right)  \right)  \right)  .
\]
Taking $\varphi$ to be the logarithmic map (we assume here that $\mathcal{O}$
is sufficiently small), we obtain
\[
\int_{\mathcal{O}}\mathbf{f}\left(  h\right)  d\mu_{H}\left(  h\right)
=\int_{\Theta_{\lambda}\left(  \varphi\left(  \mathcal{O}\right)  \right)
}\mathbf{f}\left(  \exp\left(  \Theta_{\lambda}^{-1}\left(  \xi\right)
\right)  \right)  d\mu_{H}\left(  \exp\left(  \Theta_{\lambda}^{-1}\left(
\xi\right)  \right)  \right)  .
\]
Chain Rule together with \cite[Proposition 5.5.6]{faraut2008analysis} gives
the following. Defining
\[
A\left(  \xi\right)  =\frac{\mathrm{id}-\exp\left(  -ad_{\mathfrak{h}}\left(
\Theta_{\lambda}^{-1}\left(  \xi\right)  \right)  \right)  }{ad_{\mathfrak{h}%
}\left(  \Theta_{\lambda}^{-1}\left(  \xi\right)  \right)  };
\]
up to multiplication by a positive constant, we have
\begin{equation}
\mathbf{W}_{\lambda}\left(  \xi\right)  =\left\vert \det\left(  A\left(
\xi\right)  \right)  \cdot\det\left[  \dfrac{\partial\left[  \Theta_{\lambda
}^{-1}\left(  \xi\right)  \right]  _{j}}{\partial\xi_{k}}\right]  _{1\leq
j,k\leq n}\right\vert .
\end{equation}
Note that the function $\xi\mapsto\det A\left(  \xi\right)  $ is nonzero on
$\Theta_{\lambda}\left(  \varphi\left(  \mathcal{O}\right)  \right)  $ since
each linear operator
\begin{equation}
A\left(  \xi\right)  =\sum_{k=0}^{\infty}\frac{\left(  -ad_{\mathfrak{h}%
}\left(  \Theta_{\lambda}^{-1}\left(  \xi\right)  \right)  \right)  ^{k}%
}{\left(  k+1\right)  !}%
\end{equation}
is invertible \cite[Page 25]{duistermaat2012lie}. Precisely, there is an
ordering $\left(  v_{1}\left(  \xi\right)  ,\cdots,v_{r}\left(  \xi\right)
\right)  $ for the eigenvalues of the linear operator $\frac{\mathrm{id}%
-\exp\left(  -ad_{\mathfrak{h}}\left(  \Theta_{\lambda}^{-1}\left(
\xi\right)  \right)  \right)  }{ad_{\mathfrak{h}}\left(  \Theta_{\lambda}%
^{-1}\left(  \xi\right)  \right)  }$ as well as an ordering $\left(
w_{1}\left(  \xi\right)  ,\cdots,w_{r}\left(  \xi\right)  \right)  $ for the
eigenvalues of $-ad_{\mathfrak{h}}\left(  \Theta_{\lambda}^{-1}\left(
\xi\right)  \right)  $ such that
\begin{equation}
\left\vert \det\left(  A\left(  \xi\right)  \right)  \right\vert =\left\vert
%TCIMACRO{\dprod \limits_{k=1}^{r}}%
%BeginExpansion
{\displaystyle\prod\limits_{k=1}^{r}}
%EndExpansion
v_{k}\left(  \xi\right)  \right\vert
\end{equation}
and for each index $k,$
\[
v_{k}\left(  \xi\right)  =\left\{
\begin{array}
[c]{c}%
\dfrac{e^{w_{k}\left(  \xi\right)  }-1}{w_{k}\left(  \xi\right)  }\text{ if
}w_{k}\left(  \xi\right)  \neq0\\
1\text{ if }w_{k}\left(  \xi\right)  =0
\end{array}
\right.  .
\]
As a result,
\begin{equation}
\mathbf{W}_{\lambda}\left(  \xi\right)  =\left\vert
%TCIMACRO{\dprod \limits_{k=1}^{r}}%
%BeginExpansion
{\displaystyle\prod\limits_{k=1}^{r}}
%EndExpansion
v_{k}\left(  \xi\right)  \right\vert \cdot\left\vert \det\left(  \left[
\dfrac{\partial\left[  \Theta_{\lambda}^{-1}\left(  \xi\right)  \right]  _{j}%
}{\partial\xi_{k}}\right]  _{1\leq j,k\leq n}\right)  \right\vert
\end{equation}
and for $\xi\in\Theta_{\lambda}\left(  \varphi\left(  \mathcal{O}\right)
\right)  ,$%
\[
\mathbf{W}_{\lambda}\left(  \xi\right)  >0.
\]

\end{proof}

\begin{lemma}
For any continuous function $\mathbf{s}$ compactly supported on $\mathcal{O},$
the function
\[
h\mapsto\mathbf{s}\left(  h\right)  \sqrt{\mathbf{W}_{\lambda}\left(
\Theta_{\lambda}\left(  \varphi\left(  h\right)  \right)  \right)  }%
\]
is also continuous and supported on $\mathcal{O}$.
\end{lemma}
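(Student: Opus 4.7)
The plan is to separate the claim into its support part and its continuity part, and for the latter to reduce everything to the closed-form expression for $\mathbf{W}_{\lambda}$ given in Lemma \ref{Jacob}. The support statement is immediate: since $\mathbf{s}$ vanishes off a compact subset $K\subset\mathcal{O}$, the product $h\mapsto\mathbf{s}(h)\sqrt{\mathbf{W}_{\lambda}(\Theta_{\lambda}(\varphi(h)))}$ vanishes off $K$ as well, and we may extend it by zero outside $\mathcal{O}$; because $\mathbf{s}$ already vanishes in a neighborhood of $\partial\mathcal{O}$, no continuity issue can appear at the boundary once we verify continuity on $\mathcal{O}$ itself.

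For continuity on $\mathcal{O}$, I would first observe that $\varphi:\mathcal{O}\to\varphi(\mathcal{O})$ is a diffeomorphism (for example, $\varphi=\log$ on a small enough neighborhood of the identity), and that $\Theta_{\lambda}=P^{\ast}\beta_{\lambda}\varphi^{-1}$ is, by construction, a diffeomorphism between $\varphi(\mathcal{O})$ and $\Theta_{\lambda}(\varphi(\mathcal{O}))$. Hence the composition $\Theta_{\lambda}\circ\varphi$ is a diffeomorphism of $\mathcal{O}$ onto its image, and it suffices to show that $\mathbf{W}_{\lambda}$ is continuous (in fact smooth) and strictly positive on $\Theta_{\lambda}(\varphi(\mathcal{O}))$.

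For that, I would use the explicit formula from Lemma \ref{Jacob}. The Jacobian factor $\det\bigl[\partial[\Theta_{\lambda}^{-1}(\xi)]_{j}/\partial\xi_{k}\bigr]$ is smooth because $\Theta_{\lambda}^{-1}$ is a smooth diffeomorphism. For the eigenvalue factor $\prod_{k}v_{k}(\xi)$, rather than argue directly with eigenvalues (which need not depend smoothly on $\xi$), I would invoke the identification $\prod_{k}v_{k}(\xi)=\det A(\xi)$ where
\[
A(\xi)=\sum_{k=0}^{\infty}\frac{(-ad_{\mathfrak{h}}(\Theta_{\lambda}^{-1}(\xi)))^{k}}{(k+1)!}.
\]
Since $z\mapsto(1-e^{-z})/z$ is entire, the map $\xi\mapsto A(\xi)$ is a smooth matrix-valued function of $\xi$, and so its determinant is smooth. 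Thus $\mathbf{W}_{\lambda}$ is smooth; combined with $\mathbf{W}_{\lambda}(\xi)>0$ from Lemma \ref{Jacob}, the square root $\sqrt{\mathbf{W}_{\lambda}}$ is continuous on $\Theta_{\lambda}(\varphi(\mathcal{O}))$.

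Composing with the diffeomorphism $\Theta_{\lambda}\circ\varphi$ and multiplying by the continuous function $\mathbf{s}$ then yields a continuous function on $\mathcal{O}$, which extends continuously by zero to all of $H$ as noted above. The only subtlety to handle carefully is the very last point, namely that continuity does not break across $\partial\mathcal{O}$; this is not a real obstacle because compact support of $\mathbf{s}$ inside the \emph{open} set $\mathcal{O}$ guarantees a neighborhood of $\partial\mathcal{O}$ on which $\mathbf{s}$ (and hence the whole product) is identically zero.
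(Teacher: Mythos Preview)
Your proof is correct and follows essentially the same approach as the paper's: invoke Lemma~\ref{Jacob} for the strict positivity of $\mathbf{W}_{\lambda}$, observe that the composition $\Theta_{\lambda}\circ\varphi$ is a diffeomorphism, and conclude that the square-root factor is continuous on $\mathcal{O}$ so that the product with $\mathbf{s}$ is continuous and compactly supported there. You are in fact more careful than the paper, which simply asserts continuity of $h\mapsto\sqrt{\mathbf{W}_{\lambda}(\Theta_{\lambda}(\varphi(h)))}$ without justification; your detour through $\det A(\xi)$ to avoid arguing with individual eigenvalues, and your explicit treatment of the boundary $\partial\mathcal{O}$, fill gaps that the paper leaves implicit.
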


\begin{proof}
According to Lemma \ref{Jacob}, $\mathbf{W}_{\lambda}$ is a strictly positive
function defined on $\Theta_{\lambda}\left(  \varphi\left(  \mathcal{O}%
\right)  \right)  .$ Thus, the function $h\mapsto\sqrt{\mathbf{W}_{\lambda
}\left(  \Theta_{\lambda}\left(  \varphi\left(  h\right)  \right)  \right)  }$
is a positive continuous function on $\mathcal{O}$. Furthermore, for any
continuous function $\mathbf{s}$ supported on $\mathcal{O}$, it is clear that
\[
h\mapsto\mathbf{s}\left(  h\right)  \cdot\sqrt{\mathbf{W}_{\lambda}\left(
\Theta_{\lambda}\left(  \varphi\left(  h\right)  \right)  \right)  }%
\]
is also continuous and supported on $\mathcal{O}$.
\end{proof}

\begin{remark}
Although Lemma \ref{Jacob} is useful for the proofs of the main results, it is
not practical for specific examples (see the examples given in Section
\ref{W}). To compute $\mathbf{W}_{\lambda}$ it is clearly important to have at
our disposal, an explicit formula for the Haar measure of $H$. In many cases,
this can be achieved inductively as follows. Suppose that $H=ST$ where $S,T$
are closed subgroups, $S\cap T$ is compact and $S\times T\backepsilon\left(
s,t\right)  \mapsto st\in H$ is a homeomorphism. Then
\[
\int_{H}\mathbf{f}\left(  h\right)  d\mu_{H}\left(  h\right)  =\int_{S}%
\int_{T}\mathbf{f}\left(  st\right)  \frac{\det\left(  Ad_{T}\left(  t\right)
\right)  }{\det\left(  Ad_{S}\left(  s\right)  \right)  }d\mu_{T}\left(
t\right)  d\mu_{S}\left(  s\right)
\]
and $d\mu_{S}\left(  s\right)  d\mu_{T}\left(  t\right)  $ are left Haar
measure for $S$ and $T$ respectively \cite[Proposition 5.26]%
{humphreys2012introduction}.
\end{remark}

In our quest to describe quadruples $\left(  \Gamma_{H},\Gamma_{N}%
,\mathbf{g,f}\right)  $ for which the series $\mathrm{s}\left(  \Gamma
_{H},\Gamma_{N},\mathbf{g,f}\right)  $ is convergent, we will need to also
address the following question: Let $\mathbf{f}$ be a nonzero positive
function which is compactly supported on $H.$ Can we find a discrete set
$\Lambda\subset H$ such that for all $x\in H,$ there exist positive real
numbers $A\leq B$\ not depending on $x$ such that $A\leq\sum_{\gamma\in
\Lambda}\mathbf{f}\left(  \gamma x\right)  \leq B?$ A positive answer to this
question is given by Lemma \ref{communicated}; and its proof \footnote{Many
thanks go to H. F\"{u}hr for sharing with us a complete proof of Lemma
\ref{communicated}.} requires the following concepts.

\begin{itemize}
\item A discrete set $\Gamma\subset H$ is $U$-separated if $\gamma U\cap
\gamma^{\prime}U$ is empty for $\gamma\neq\gamma^{\prime}$ in $\Gamma.$

\item A discrete set $\Gamma\subset H$ is called $V$-dense if $H=\Gamma V.$
\end{itemize}

\begin{lemma}
\label{communicated} Let $\mathbf{f}$ be a nonzero positive function which is
compactly supported on $H.$ Then there exists a discrete set $\Gamma
_{H}\subset H$ such that the compound inequalities%
\[
A\leq\sum_{\gamma\in\Gamma_{H}}\mathbf{f}\left(  \gamma x\right)  \leq B
\]
hold for all $x\in H$ with $0<A\leq B<\infty$ independent of $x.$
\end{lemma}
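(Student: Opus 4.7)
The plan is to construct $\Gamma_H$ as the inverse of a maximal $U$-separated subset of $H$, for a suitably small symmetric relatively compact neighborhood $U$ of the identity, and to derive both bounds from the left-invariance of $\mu_H$.

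First I would use positivity and continuity of $\mathbf{f}$ to locate $h_0 \in H$, a constant $c > 0$, and a symmetric relatively compact open neighborhood $V$ of the identity with $\mathbf{f}(h_0 v) \geq c$ for all $v \in V$. Conjugating the eventual $\Gamma_H$ by $h_0$ at the end, one reduces to $h_0 = e$, so that $\mathbf{f} \geq c$ on $V$. I then choose an open symmetric relatively compact neighborhood $U$ of $e$ with $U^2 \subseteq V$, and invoke Zorn's lemma to obtain a maximal $U$-separated subset $\Delta \subseteq H$ (where $U$-separated means $\delta U \cap \delta' U = \emptyset$ for distinct $\delta,\delta'\in\Delta$). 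Maximality forces $H = \Delta \cdot U^2 \subseteq \Delta \cdot V$, so $\Delta$ is $V$-dense, and the $U$-separation also makes $\Delta$ discrete. Finally, set $\Gamma_H := \Delta^{-1}$.

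For the lower bound, given any $x \in H$, the $V$-density of $\Delta$ yields some $\delta \in \Delta$ with $x \in \delta V$, so $\gamma := \delta^{-1} \in \Gamma_H$ satisfies $\gamma x \in V$, whence $\mathbf{f}(\gamma x) \geq c$; this single term already gives $\sum_{\gamma \in \Gamma_H}\mathbf{f}(\gamma x) \geq c$. For the upper bound, let $K := \mathrm{supp}(\mathbf{f})$ (compact); then
\[
\sum_{\gamma \in \Gamma_H} \mathbf{f}(\gamma x) \leq \|\mathbf{f}\|_\infty \cdot \#\{\gamma \in \Gamma_H : \gamma x \in K\} = \|\mathbf{f}\|_\infty \cdot \#\{\delta \in \Delta : \delta \in xK^{-1}\}.
\]
Since $\{\delta U\}_{\delta \in \Delta}$ is pairwise disjoint and each $\delta \in xK^{-1}$ forces $\delta U \subseteq xK^{-1}U$, one gets
\[
\#\{\delta \in \Delta : \delta \in xK^{-1}\} \cdot \mu_H(U) \leq \mu_H(xK^{-1}U) = \mu_H(K^{-1}U),
\]
where the final equality is the crucial left-invariance of $\mu_H$. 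Since $K^{-1}U$ is relatively compact, this bound is finite and independent of $x$, yielding $B := \|\mathbf{f}\|_\infty \mu_H(K^{-1}U)/\mu_H(U)$.

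The main delicacy lies precisely in the choice to take $\Gamma_H = \Delta^{-1}$ rather than $\Delta$ itself. A naively placed $U$-separated, $V$-dense $\Gamma_H$ would force one to estimate $\mu_H(Kx^{-1}U)$, which is \emph{not} left-invariant in $x$ and in general depends on the modular function of $H$. Inverting converts the relevant set into the left translate $xK^{-1}U$, for which left-invariance of $\mu_H$ immediately supplies the desired $x$-independent bound.
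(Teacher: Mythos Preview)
Your proof is correct and follows the same skeleton as the paper's: build $\Gamma_H$ from a set that is simultaneously $U$-separated and $V$-dense for suitably nested symmetric neighborhoods of the identity, then get the lower bound from density and the upper bound from separation. The construction of such a set (maximal $U$-separated, hence $U^2$-dense) is identical in both arguments.

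Where you diverge is in the upper-bound mechanism. The paper covers $\mathrm{supp}(\mathbf f)$ by finitely many translates $x_1V,\dots,x_kV$ with $V^2\subset U$ and argues combinatorially that each translate can contain at most one point of a $U$-separated set, giving the uniform bound $k\cdot\|\mathbf f\|_\infty$. You instead use a Haar-measure packing argument: the disjoint sets $\delta U$ with $\delta\in\Delta\cap xK^{-1}$ all sit inside $xK^{-1}U$, and left-invariance of $\mu_H$ makes $\mu_H(xK^{-1}U)=\mu_H(K^{-1}U)$ independent of $x$. Your route has two advantages: it yields an explicit constant $B=\|\mathbf f\|_\infty\,\mu_H(K^{-1}U)/\mu_H(U)$, and your deliberate choice $\Gamma_H=\Delta^{-1}$ together with the remark about why one must land on a \emph{left} translate $xK^{-1}U$ makes transparent a left/right issue that the paper's sketch glosses over. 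The paper's covering argument, on the other hand, avoids any appeal to Haar measure altogether.
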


\begin{proof}
Fixing a set $\Gamma\subset H$ which is $U$-separated, for some open set $U$
around the neutral element of $H$, we select an open symmetric set $V$ such
that $V^{2}\subset U.$ Next, let
\[
\mathrm{supp}\left(  f\right)  =%
%TCIMACRO{\dbigcup \limits_{i=1}^{k}}%
%BeginExpansion
{\displaystyle\bigcup\limits_{i=1}^{k}}
%EndExpansion
x_{i}V\text{ where }x_{i}\in\Gamma.
\]
Then each set $x_{i}V$ contains no more than one element from $\Gamma.$
Suppose otherwise. Then given $\kappa,\gamma\in\Gamma$ in $x_{i}V$ one has
that $\kappa^{-1}\gamma\in V^{2}\subset U$ and this contradicts the fact that
$\Gamma$ is $U$-separated. Thus,
\[
\sum_{\gamma\in\Gamma}\mathbf{f}\left(  \gamma x\right)  \leq k\cdot\left(
\sup\left\{  f\left(  x\right)  :x\in H\right\}  \right)  <\infty.
\]
Next, let $W$ be an open set contained in $H$ such that $\mathbf{f}\left(
x\right)  >\epsilon$ for all $x\in W$ and for some positive real number
$\epsilon>0.$ Without loss of generality, we may assume that $W$ is an open
set around the neutral element in $H.$ Let us select a symmetric open set $V$
around the identity in $H$ such that $V^{2}\subset W.$ Next, observe that
every $V$-dense subset $\Gamma$ must intersect $W.$ To see this, let $y\in V$
and $\gamma\in\Gamma$ such that $y\in\gamma V.$ Then clearly $\gamma\in
yV^{-1}\subset V^{2}\subset W$ and
\[
\sum_{\gamma\in\Gamma}\mathbf{f}\left(  \gamma x\right)  \geq\epsilon>0.
\]
To complete the proof, it is enough to select a $V$-dense, $U$-separated set
with sufficiently small $V.$ To this end, pick $Z$ contained in $V$ such that
$Z$ is symmetric, $Z^{2}$ is contained in $V$ and pick a $Z$-separated subset
which is maximal with respect to inclusion. Then such a set is $V$-dense and
$Z$-separated.
\end{proof}

\begin{remark}
In the case where $H$ is completely solvable, an explicit construction of the
set $\Gamma_{H}$ in Lemma \ref{communicated} can be found in \cite[Lemma 29]{oussa2018frames}.
\end{remark}

Let
\begin{equation}
\mathbf{w}\left(  h\right)  =\mathbf{W}_{\lambda}\left(  \Theta_{\lambda
}\left(  \varphi\left(  h\right)  \right)  \right)  .
\end{equation}

\begin{remark}
\label{mM} Given a continuous (or a smooth) function $\mathbf{f}$ which is
supported inside $\mathcal{O}$, since the product $\mathbf{f}\sqrt{\mathbf{w}%
}$ is continuous and supported inside $\mathcal{O}$, according to Lemma
\ref{communicated}, there exists a discrete subset $\Gamma_{H}$ of $H$ such
that given
\begin{equation}
m_{H,\mathbf{f}}=\mathrm{essinf}_{h\in H}\left(  \sum_{\ell\in\Gamma_{H}%
}\left\vert \mathbf{f}\left(  \ell h\right)  \cdot\sqrt{\mathbf{w}\left(  \ell
h\right)  }\right\vert ^{2}\right)  \text{ and }M_{H,\mathbf{f}}%
=\mathrm{esssup}_{h\in H}\left(  \sum_{\ell\in\Gamma_{H}}\left\vert
\mathbf{f}\left(  \ell h\right)  \cdot\sqrt{\mathbf{w}\left(  \ell h\right)
}\right\vert ^{2}\right)  , \label{mM}%
\end{equation}
we have
\[
0<m_{H,\mathbf{f}}\leq M_{H,\mathbf{f}}<\infty.
\]

\end{remark}

We recall that by assumption $\Theta_{\lambda}\left(  \varphi\left(
\mathcal{O}\right)  \right)  $ is relatively compact in $P^{\ast}\left(
\mathfrak{n}^{\ast}\right)  $ and $\Theta_{\lambda}\left(  \varphi\left(
\mathcal{O}\right)  \right)  $ has positive Lebesgue measure in
\begin{equation}
\mathfrak{p}^{\ast}=P^{\ast}\left(  \mathfrak{n}^{\ast}\right)  \simeq
\mathbb{R}^{r}.\label{pstar}%
\end{equation}
Given $C\subset\mathfrak{p}^{\ast},$ and $\Gamma_{N}$ contained in
$\exp(\mathfrak{p}),$ we define
\begin{equation}
\mathcal{E}\left(  C,\Gamma_{N}\right)  =\left\{  \frac{e^{2\pi i\left\langle
\xi,Y\right\rangle }}{\left\vert C\right\vert ^{1/2}}:Y\in\log\left(
\Gamma_{N}\right)  \right\}  .\label{EC}%
\end{equation}
Put
\begin{equation}
\mathfrak{p}=P_{J}\left(  \mathfrak{n}\right)  =P\left(  \mathfrak{n}\right)
.\label{p}%
\end{equation}

\begin{lemma}
\label{tstar}There exists a set $C$ which is a compact subset of
$\mathfrak{p}^{\ast}$ such that $\Theta_{\lambda}\left(  \varphi\left(
\mathcal{O}\right)  \right)  \subseteq C$ and $\Gamma_{N}$ is a discrete
subset of $\exp\left(  \mathfrak{p}\right)  $ such that the trigonometric
system $\mathcal{E}\left(  C,\Gamma_{N}\right)  $ is an orthonormal basis for
$L^{2}\left(  C,d\xi\right)  .$
\end{lemma}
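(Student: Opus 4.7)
The plan is to exploit the fact that $\Theta_{\lambda}(\varphi(\mathcal{O}))$ is relatively compact in the finite-dimensional vector space $\mathfrak{p}^{\ast} \simeq \mathbb{R}^r$ (see \eqref{pstar}), and then to use the classical Fourier series basis on a cube. Concretely, first I would fix the basis of $\mathfrak{p}^{\ast}$ given by $\{P^{\ast} X_k^{\ast} : k \in J\}$, which dualizes the basis $\{X_k : k \in J\}$ of $\mathfrak{p}$ under the natural pairing $\langle \,\cdot\,,\,\cdot\,\rangle$ used throughout the paper. Since $\Theta_{\lambda}(\varphi(\mathcal{O}))$ is relatively compact, there exists a real number $T>0$ such that
\[
\Theta_{\lambda}\bigl(\varphi(\mathcal{O})\bigr) \subseteq C := \Bigl\{\sum_{k \in J} \xi_k\, P^{\ast} X_k^{\ast} : |\xi_k| \leq T \Bigr\},
\]
which is a compact set in $\mathfrak{p}^{\ast}$ of Lebesgue measure $|C| = (2T)^r$.

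Next, I would invoke the standard result that for the cube $C$, the system of exponentials $\{(2T)^{-r/2}\, e^{2\pi i \langle \xi, Y\rangle} : Y \in \tfrac{1}{2T}\Lambda_J\}$ is an orthonormal basis of $L^2(C, d\xi)$, where $\Lambda_J = \sum_{k \in J} \mathbb{Z} X_k \subset \mathfrak{p}$ is the integer lattice in $\mathfrak{p}$ with respect to the chosen basis. This is just the $r$-dimensional Fourier series on the box $C$, transported via the duality pairing on $\mathfrak{p}^\ast \times \mathfrak{p}$.

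Finally, I would define
\[
\Gamma_N := \exp\!\Bigl(\tfrac{1}{2T}\Lambda_J\Bigr) \subset \exp(\mathfrak{p}).
\]
Because $N$ is exponential solvable, $\exp : \mathfrak{n} \to N$ is a global diffeomorphism; in particular it is injective on $\mathfrak{p}$ (which is a linear subspace, see \eqref{p}), and the image of a lattice is discrete in $\exp(\mathfrak{p})$. By construction $\log(\Gamma_N) = \tfrac{1}{2T}\Lambda_J$, and so the system $\mathcal{E}(C,\Gamma_N)$ defined in \eqref{EC} coincides with the Fourier orthonormal basis just described.

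I do not anticipate any genuine obstacle here: the only points requiring care are (i) matching the normalization $|C|^{-1/2}$ in the definition of $\mathcal{E}(C,\Gamma_N)$ with the volume $(2T)^r$ of $C$, and (ii) ensuring that the lattice used on the $\mathfrak{p}$-side is the correct dual lattice for the cube $C$ under the pairing, so that orthogonality of exponentials over $C$ reduces to the familiar identity $\int_{-T}^{T} e^{2\pi i t (m-n)/(2T)}\, dt = 2T\,\delta_{m,n}$ in each coordinate. Both are routine bookkeeping, and $T$ may be chosen arbitrarily large so no compatibility issue arises.
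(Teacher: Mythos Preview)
Your proposal is correct and follows essentially the same approach as the paper: choose a cube in $\mathfrak{p}^{\ast}$ large enough to contain the relatively compact set $\Theta_{\lambda}(\varphi(\mathcal{O}))$, and take $\Gamma_N$ to be the exponential of the dual lattice so that the exponentials form the classical Fourier basis on that cube. Your $T$ plays the role of the paper's $\varsigma=\mathrm{esssup}_{h\in\mathcal{O}}\|\Theta_{\lambda}(\varphi(h))\|_{\max}$, and your added remark that $\exp$ is injective on $\mathfrak{p}$ (hence $\Gamma_N$ is discrete) is a small clarification the paper leaves implicit.
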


\begin{proof}
Define a positive real number $\varsigma$ such that
\[
\varsigma=\mathrm{esssup}_{h\in H}\left\{  \left\Vert \Theta_{\lambda}\left(
\varphi\left(  h\right)  \right)  \right\Vert _{\max}:h\in\mathcal{O}\right\}
.
\]
Next, let $J=\left\{  j_{1}<\cdots<j_{r}\right\}  =\left\{  1,2,\cdots
,n\right\}  .$ Put $C=\sum_{k=1}^{r}\left[  -\varsigma,\varsigma\right)
X_{j_{k}}^{\ast}\subset\mathfrak{p}^{\ast}$ and define
\[
\Gamma_{N}=\exp\left(  \sum_{k=1}^{r}\frac{1}{2\varsigma}%
%TCIMACRO{\U{2124} }%
%BeginExpansion
\mathbb{Z}
%EndExpansion
X_{j_{k}}\right)  \subset N.
\]
Then $\left\{  C+k:k\in\sum_{k=1}^{r}2\varsigma%
%TCIMACRO{\U{2124} }%
%BeginExpansion
\mathbb{Z}
%EndExpansion
X_{j_{k}}^{\ast}\right\}  $ tiles $\mathfrak{p}^{\ast}.$ Moreover, the
Lebesgue measure of $C$ is equal $\left(  2\varsigma\right)  ^{r}$ and the
system $\mathcal{E}\left(  C,\Gamma_{N}\right)  $ is an orthonormal basis for
$L^{2}\left(  C,d\xi\right)  .$
\end{proof}

To simplify our presentation, we list below some important conditions which we
shall refer to throughout this work. \newpage

\begin{condition}
\label{Cond} \text{ }

\begin{enumerate}
\item $\Gamma_{H}$ is a discrete subset of $H$ such that
\begin{align*}
0  &  <m_{H,\mathbf{f}}=\mathrm{essinf}_{h\in H}\left(  \sum_{\ell\in
\Gamma_{H}}\left\vert \mathbf{f}\left(  \ell h\right)  \sqrt{\mathbf{w}\left(
\ell h\right)  }\right\vert ^{2}\right) \\
&  \leq M_{H,\mathbf{f}}=\mathrm{esssup}_{h\in H}\left(  \sum_{\ell\in
\Gamma_{H}}\left\vert \mathbf{f}\left(  \ell h\right)  \sqrt{\mathbf{w}\left(
\ell h\right)  }\right\vert ^{2}\right)  <\infty
\end{align*}
as defined in (\ref{mM}).

\item $C$ is a compact subset of $\mathfrak{p}^{\ast}$ such that
$\Theta_{\lambda}\left(  \varphi\left(  \mathcal{O}\right)  \right)  \subseteq
C$ and $\Gamma_{N}$ is a discrete subset of $\exp\left(  \mathfrak{p}\right)
$ such that the trigonometric system
\[
\mathcal{E}\left(  C,\Gamma_{N}\right)  =\left\{  \frac{e^{2\pi i\left\langle
\xi,Y\right\rangle }}{\left\vert C\right\vert ^{1/2}}:Y\in\log\left(
\Gamma_{N}\right)  \right\}
\]
is an orthonormal basis for $L^{2}\left(  C,d\xi\right)  $ as defined
(\ref{EC}).

\item $m_{H,\mathbf{f}}=M_{H,\mathbf{f}}$ (see (\ref{mM}))

\item $m_{H,\mathbf{f}}=M_{H,\mathbf{f}}=\left\vert C\right\vert ^{-1}$ (see
(\ref{mM}) and (\ref{EC}))

\item $\int_{H}\left\vert \mathbf{f}\left(  h\right)  \right\vert ^{2}d\mu
_{H}\left(  h\right)  =1$

\item There is a discrete subset $\Gamma_{H}$ of $H$ such that $\left\{
\ell^{-1}\mathcal{O}:\ell\in\Gamma_{H}\right\}  $ is a tiling of $H.$

\item $\mathbf{f}$ is a vector in $L^{2}\left(  \mathcal{O},d\mu_{H}\right)  $
such that
\[
\mathbf{f}\left(  \varphi^{-1}\left(  \Theta_{\lambda}^{-1}\left(  \xi\right)
\right)  \right)  =\mathbf{W}_{\lambda}\left(  \xi\right)  ^{-1/2}%
\cdot1_{\Theta_{\lambda}\left(  \varphi\left(  \mathcal{O}\right)  \right)
}\left(  \xi\right)
\]
and $\mathbf{W}_{\lambda}\left(  \xi\right)  =\frac{d\mu_{H}\left(
\varphi^{-1}\left(  \Theta_{\lambda}^{-1}\left(  \xi\right)  \right)  \right)
}{d\xi}.$

\item $\mathbf{f}$ is a vector in $L^{2}\left(  \mathcal{O},d\mu_{H}\right)  $
such that $\frac{\left\Vert \mathbf{f}\right\Vert _{\mathfrak{H}}}%
{\sqrt{\left\vert C\right\vert }}=1.$
\end{enumerate}
\end{condition}

For $\Gamma\subset G,$ we define
\begin{equation}
\mathcal{S}\left(  \mathbf{f,}\Gamma\right)  =\left\{  \pi\left(  x\right)
\mathbf{f}:x\in\Gamma\right\}  .
\end{equation}

\begin{lemma}
\label{FProperty} Let $\mathbf{f}$ be a continuous (or a smooth) function
which is compactly supported on $\mathcal{O}$. Next, let $\Gamma_{H}$ and
$\Gamma_{N}$ be discrete sets satisfying the conditions described in Condition
\ref{Cond} (1), and (2). Then $\mathcal{S}\left(  \mathbf{f,}\Gamma_{H}%
^{-1}\Gamma_{N}\right)  $ is a frame for $\mathfrak{H}$ with frame bounds
\[
0<m_{H,\mathbf{f}}\cdot\left\vert C\right\vert \leq M_{H,\mathbf{f}}%
\cdot\left\vert C\right\vert <\infty.
\]
Additionally, the following holds true.

\begin{enumerate}
\item If Condition \ref{Cond} (3) holds then $\mathcal{S}\left(
\mathbf{f,}\Gamma_{H}^{-1}\Gamma_{N}\right)  $ is a tight frame for
$\mathfrak{H}$.

\item If Condition \ref{Cond} (4) and (5) hold then $\mathcal{S}\left(
\mathbf{f,}\Gamma_{H}^{-1}\Gamma_{N}\right)  $ is an orthonormal basis for
$\mathfrak{H}.$
\end{enumerate}
\end{lemma}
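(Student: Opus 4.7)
My plan is to start from the formula for $\mathrm{s}(\Gamma_{H},\Gamma_{N},\mathbf{g},\mathbf{f})$ furnished by Lemma \ref{LemmaS}, and then successively (i) reinterpret the inner integrals as Fourier coefficients against the orthonormal basis of Condition \ref{Cond}(2), (ii) push the calculation back to $H$ via the Jacobian of Lemma \ref{Jacob}, and (iii) close up with the pointwise two-sided estimate from Condition \ref{Cond}(1). Concretely, for each $\ell\in\Gamma_{H}$ I would define
\[
F_{\ell}(\xi)=\mathbf{g}\!\left(\ell^{-1}\varphi^{-1}(\Theta_{\lambda}^{-1}(\xi))\right)\overline{\mathbf{f}\!\left(\varphi^{-1}(\Theta_{\lambda}^{-1}(\xi))\right)}\,\mathbf{W}_{\lambda}(\xi)\cdot 1_{\Theta_{\lambda}(\varphi(\mathcal{O}))}(\xi),
\]
extended by zero to all of $C$; continuity and compact support of $\mathbf{f}$ inside $\mathcal{O}$ together with the relative compactness of $\Theta_{\lambda}(\varphi(\mathcal{O}))$ and the strict positivity of $\mathbf{W}_{\lambda}$ make $F_{\ell}\in L^{2}(C,d\xi)$. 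The inner integral $\int F_{\ell}(\xi)e^{-2\pi i\langle\xi,PX\rangle}d\xi$ appearing in Lemma \ref{LemmaS} is then exactly $|C|^{1/2}$ times the Fourier coefficient of $F_{\ell}$ against the element of $\mathcal{E}(C,\Gamma_{N})$ indexed by $PX\in\log(\Gamma_{N})$.

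Applying Parseval for the orthonormal basis $\mathcal{E}(C,\Gamma_{N})$ yields
\[
\sum_{\exp(PX)\in\Gamma_{N}}\Bigl|\int_{C}F_{\ell}(\xi)e^{-2\pi i\langle\xi,PX\rangle}d\xi\Bigr|^{2}=|C|\cdot\|F_{\ell}\|_{L^{2}(C)}^{2}.
\]
Next, using $\mathbf{W}_{\lambda}(\xi)d\xi=d\mu_{H}(\varphi^{-1}(\Theta_{\lambda}^{-1}(\xi)))$ from \eqref{W} together with the left invariance of $d\mu_{H}$ under the substitution $h\mapsto\ell h$, I can rewrite
\[
|C|\cdot\|F_{\ell}\|_{L^{2}(C)}^{2}=|C|\int_{H}|\mathbf{g}(h)|^{2}\,|\mathbf{f}(\ell h)|^{2}\,\mathbf{w}(\ell h)\,d\mu_{H}(h).
\]
Summing over $\ell\in\Gamma_{H}$ (Tonelli justifies interchanging the sum and integral, using nonnegativity) gives
\[
\mathrm{s}(\Gamma_{H},\Gamma_{N},\mathbf{g},\mathbf{f})=|C|\int_{H}|\mathbf{g}(h)|^{2}\Bigl(\sum_{\ell\in\Gamma_{H}}|\mathbf{f}(\ell h)|^{2}\mathbf{w}(\ell h)\Bigr)d\mu_{H}(h).
\]
Sandwiching the inner sum between $m_{H,\mathbf{f}}$ and $M_{H,\mathbf{f}}$ via Condition \ref{Cond}(1) produces the announced frame bounds $m_{H,\mathbf{f}}|C|$ and $M_{H,\mathbf{f}}|C|$.

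The two additional claims then fall out. If Condition \ref{Cond}(3) holds the lower and upper bounds coincide, so $\mathcal{S}(\mathbf{f},\Gamma_{H}^{-1}\Gamma_{N})$ is tight. Under Condition \ref{Cond}(4)–(5) the tight bound equals $1$, whence the system is a Parseval frame; since $\pi$ is unitary and $\|\mathbf{f}\|_{\mathfrak{H}}=1$ each element of $\mathcal{S}$ has norm one, and a Parseval frame of unit vectors is forced to be an orthonormal basis by taking $\mathbf{g}=\pi(\gamma)\mathbf{f}$ in the Parseval identity.

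The routine parts are the change of variables and the Parseval application; the only real care required is the bookkeeping step (ii), specifically justifying the use of left invariance on an $\mathcal{O}$-supported function and verifying that the Jacobian $\mathbf{W}_{\lambda}$ from Lemma \ref{Jacob} is precisely what converts the $\xi$-integral back into a Haar integral on $H$. I expect this passage between the $\xi$-picture on $C\subset\mathfrak{p}^{\ast}$ and the $h$-picture on $H$ to be the main (if still purely mechanical) obstacle.
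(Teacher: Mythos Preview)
Your proposal is correct and follows essentially the same route as the paper: invoke Lemma~\ref{LemmaS}, recognize the inner integrals as Fourier coefficients against the orthonormal basis $\mathcal{E}(C,\Gamma_N)$, apply Parseval to collapse the $\Gamma_N$-sum, change variables back to $H$ via $\mathbf{W}_\lambda(\xi)\,d\xi=d\mu_H$ combined with left invariance, interchange sum and integral by Tonelli, and finally sandwich using Condition~\ref{Cond}(1). The only cosmetic difference is that the paper first restricts to continuous compactly supported $\mathbf{g}$ and then extends by density, whereas your argument (correctly) applies directly to arbitrary $\mathbf{g}\in\mathfrak{H}$.
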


\begin{proof}
Let $\mathbf{g}$ be a continuous function which is compactly supported in $H$.
Put
\[
\mathrm{s}\left(  \Gamma_{H},\Gamma_{N},\mathbf{g,f}\right)  =\sum_{\ell
\in\Gamma_{H}}\sum_{\exp\left(  PX\right)  \in\Gamma_{N}}\left\vert
\left\langle \mathbf{g},\pi\left(  \ell^{-1}\right)  \pi\left(  \exp\left(
PX\right)  \right)  \mathbf{f}\right\rangle _{\mathfrak{H}}\right\vert ^{2}.
\]
According to Lemma \ref{LemmaS},
\begin{align*}
&  \mathrm{s}\left(  \Gamma_{H},\Gamma_{N},\mathbf{g,f}\right) \\
&  =\sum_{\ell\in\Gamma_{H}}\sum_{\exp\left(  PX\right)  \in\Gamma_{N}%
}\left\vert \int_{\Theta_{\lambda}\left(  \varphi\left(  \mathcal{O}\right)
\right)  }\left[  \mathbf{g}\left(  \ell^{-1}\varphi^{-1}\left(
\Theta_{\lambda}^{-1}\left(  \xi\right)  \right)  \right)  \overline
{\mathbf{f}\left(  \varphi^{-1}\left(  \Theta_{\lambda}^{-1}\left(
\xi\right)  \right)  \right)  }\mathbf{W}_{\lambda}\left(  \xi\right)
\right]  e^{-2\pi i\left\langle \xi,PX\right\rangle }d\xi\right\vert ^{2}.
\end{align*}
To simplify notation, we let
\[
u\left(  \xi\right)  =\varphi^{-1}\left(  \Theta_{\lambda}^{-1}\left(
\xi\right)  \right)  \in H.
\]
Then%
\[
\mathrm{s}\left(  \Gamma_{H},\Gamma_{N},\mathbf{g,f}\right)  =\sum_{\ell
\in\Gamma_{H}}\sum_{\exp\left(  PX\right)  \in\Gamma_{N}}\left\vert
\int_{\Theta_{\lambda}\left(  \varphi\left(  \mathcal{O}\right)  \right)
}\mathbf{g}\left(  \ell^{-1}u\left(  \xi\right)  \right)  \overline
{\mathbf{f}\left(  u\left(  \xi\right)  \right)  }\mathbf{W}_{\lambda}\left(
\xi\right)  e^{-2\pi i\left\langle \xi,PX\right\rangle }d\xi\right\vert ^{2}.
\]
Letting $\mathfrak{F}_{C}$ be the Fourier series defined on $L^{2}(C),$ it is
clear that
\begin{align*}
&  \int_{\Theta_{\lambda}\left(  \varphi\left(  \mathcal{O}\right)  \right)
}\mathbf{g}\left(  \ell^{-1}u\left(  \xi\right)  \right)  \overline
{\mathbf{f}\left(  u\left(  \xi\right)  \right)  }\mathbf{W}_{\lambda}\left(
\xi\right)  e^{-2\pi i\left\langle \xi,PX\right\rangle }d\xi\\
&  =\mathfrak{F}_{C}\left(  \xi\mapsto\mathbf{g}\left(  \ell^{-1}u\left(
\xi\right)  \right)  \overline{\mathbf{f}\left(  u\left(  \xi\right)  \right)
}\mathbf{W}_{\lambda}\left(  \xi\right)  1_{\Theta_{\lambda}\left(
\varphi\left(  \mathcal{O}\right)  \right)  }\left(  \xi\right)  \right)
\left(  PX\right)  .
\end{align*}
Appealing to Plancherel's theorem,
\[
\mathrm{s}\left(  \Gamma_{H},\Gamma_{N},\mathbf{g,f}\right)  =\left\vert
C\right\vert \cdot\sum_{\ell\in\Gamma_{H}}\int_{\Theta_{\lambda}\left(
\varphi\left(  \mathcal{O}\right)  \right)  }\left\vert \mathbf{g}\left(
\ell^{-1}u\left(  \xi\right)  \right)  \overline{\mathbf{f}\left(  u\left(
\xi\right)  \right)  }\mathbf{W}_{\lambda}\left(  \xi\right)  ^{1/2}%
\right\vert ^{2}\mathbf{W}_{\lambda}\left(  \xi\right)  d\xi.
\]
Setting
\[
h=u\left(  \xi\right)  =\varphi^{-1}\left(  \Theta_{\lambda}^{-1}\left(
\xi\right)  \right)  ,
\]
we obtain%
\begin{align}
\mathrm{s}\left(  \Gamma_{H},\Gamma_{N},\mathbf{g,f}\right)   &  =\sum
_{\ell\in\Gamma_{H}}\int_{\ell^{-1}\mathcal{O}}\left\vert \mathbf{g}\left(
h\right)  \right\vert ^{2}\left\vert \mathbf{f}\left(  \ell h\right)
\mathbf{w}\left(  \ell h\right)  ^{1/2}\right\vert ^{2}d\mu_{H}\left(
h\right) \label{lastline}\\
&  =\sum_{\ell\in\Gamma_{H}}\int_{H}\left\vert \mathbf{g}\left(  h\right)
\right\vert ^{2}\left\vert \mathbf{f}\left(  \ell h\right)  \mathbf{w}\left(
\ell h\right)  ^{1/2}\right\vert ^{2}d\mu_{H}\left(  h\right)  .
\label{Lastline2}%
\end{align}
Using Tonelli's Theorem to interchange the sum and integral in
(\ref{Lastline2}) yields
\[
\mathrm{s}\left(  \Gamma_{H},\Gamma_{N},\mathbf{g,f}\right)  =\left\vert
C\right\vert \cdot\int_{H}\left\vert \mathbf{g}\left(  h\right)  \right\vert
^{2}\sum_{\ell\in\Gamma_{H}}\left\vert \mathbf{f}\left(  \ell h\right)
\mathbf{w}\left(  \ell h\right)  ^{1/2}\right\vert ^{2}d\mu_{H}\left(
h\right)  .
\]
In summary,
\[
\mathrm{s}\left(  \Gamma_{H},\Gamma_{N},\mathbf{g,f}\right)  \leq\left\vert
C\right\vert \cdot M_{H,\mathbf{f}}\int_{H}\left\vert \mathbf{g}\left(
h\right)  \right\vert ^{2}d\mu_{H}\left(  h\right)  =\left\vert C\right\vert
M_{H,\mathbf{f}}\left\Vert \mathbf{g}\right\Vert _{\mathfrak{H}}^{2}%
\]
and
\[
\mathrm{s}\left(  \Gamma_{H},\Gamma_{N},\mathbf{g,f}\right)  \geq\left\vert
C\right\vert \cdot m_{H,\mathbf{f}}\int_{H}\left\vert \mathbf{g}\left(
h\right)  \right\vert ^{2}d\mu_{H}\left(  h\right)  =\left\vert C\right\vert
m_{H,\mathbf{f}}\left\Vert \mathbf{g}\right\Vert _{\mathfrak{H}}^{2}.
\]
Since the set of all continuous and compactly supported functions is dense in
$\mathfrak{H}$, it follows that the collection
\[
\left\{  \pi\left(  \ell^{-1}\kappa\right)  \mathbf{f}:\left(  \ell
,\kappa\right)  \in\Gamma_{H}\times\Gamma_{N}\right\}
\]
is a frame for the Hilbert space $\mathfrak{H}$ with frame bounds $\left\vert
C\right\vert \cdot m_{H,\mathbf{f}}\leq\left\vert C\right\vert \cdot
M_{H,\mathbf{f}}.$ This proves the first part of the result.

For (1), assuming additionally that $m_{H,\mathbf{f}}=M_{H,\mathbf{f}},$ the
lower and upper frame bound coincide and consequently, $\mathcal{S}\left(
\mathbf{f,}\Gamma_{H}^{-1}\Gamma_{N}\right)  $ is a tight frame for
$\mathfrak{H}.$

Regarding (2), under the assumption that
\[
\left\vert C\right\vert \cdot m_{H,\mathbf{f}}=\left\vert C\right\vert \cdot
M_{H,\mathbf{f}}=1
\]
and $\int_{H}\left\vert \mathbf{f}\left(  h\right)  \right\vert ^{2}d\mu
_{H}\left(  h\right)  =1,$ we obtain $\mathcal{S}\left(  \mathbf{f,}\Gamma
_{H}^{-1}\Gamma_{N}\right)  $ is a unit norm Parseval frame and therefore, it
must be necessarily be an orthonormal basis.
\end{proof}

\begin{proposition}
Let $\mathbf{f}$ be a continuous (or a smooth) function which is compactly
supported on $\mathcal{O}$. Next, let $\Gamma_{N}$ be the discrete set
described in Condition \ref{Cond} (2). Given a discrete set $\Lambda$ of $H$,
if the system $\mathcal{S}\left(  \mathbf{f,}\Lambda^{-1}\Gamma_{N}\right)  $
is a frame for $\mathfrak{H}$ with frame bounds $A,B$ then for almost every
$h\in H,$ we have
\[
A\left\vert C\right\vert ^{-1}\leq\sum_{\ell\in\Lambda}\left\vert
\mathbf{f}\left(  \ell h\right)  \sqrt{\mathbf{w}\left(  \ell h\right)
}\right\vert ^{2}\leq B\left\vert C\right\vert ^{-1}.
\]
In particular, $\mathbf{f}$ must be bounded.
\end{proposition}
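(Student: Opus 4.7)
My plan is to rerun the computation from the proof of Lemma \ref{FProperty} verbatim, but with $\Lambda$ in place of $\Gamma_H$, and then extract pointwise bounds on the weight function from the frame inequality.

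First, I would observe that the derivation leading to equation (\ref{Lastline2}) in the proof of Lemma \ref{FProperty} only uses Condition \ref{Cond}(2) (so that $\mathcal{E}(C,\Gamma_N)$ is an orthonormal basis and Plancherel applies) together with Tonelli's theorem applied to a non-negative integrand; it never invokes Condition \ref{Cond}(1). Consequently, for every continuous compactly supported $\mathbf{g} \in \mathfrak{H}$,
\[
\mathrm{s}(\Lambda,\Gamma_N,\mathbf{g},\mathbf{f}) \;=\; |C|\cdot \int_H |\mathbf{g}(h)|^2\, \Phi(h)\, d\mu_H(h), \qquad \Phi(h) := \sum_{\ell\in\Lambda}|\mathbf{f}(\ell h)|^2\,\mathbf{w}(\ell h).
\]

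Next, by hypothesis $\mathcal{S}(\mathbf{f},\Lambda^{-1}\Gamma_N)$ is a frame with bounds $A\le B$, so $A\|\mathbf{g}\|^2 \le \mathrm{s}(\Lambda,\Gamma_N,\mathbf{g},\mathbf{f}) \le B\|\mathbf{g}\|^2$. Combining with the identity above yields
\[
A\int_H |\mathbf{g}(h)|^2\,d\mu_H(h) \;\le\; |C|\int_H |\mathbf{g}(h)|^2\,\Phi(h)\,d\mu_H(h) \;\le\; B\int_H |\mathbf{g}(h)|^2\,d\mu_H(h)
\]
for every $\mathbf{g}$ in the dense class $C_c(H)$. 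A standard test-function argument (taking $\mathbf{g}$ to be a bump approximating the indicator of an arbitrary small ball and using Lebesgue differentiation) then forces the pointwise almost-everywhere inequality
\[
A|C|^{-1} \;\le\; \Phi(h) \;\le\; B|C|^{-1}.
\]

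Finally, to derive boundedness of $\mathbf{f}$ I would use the upper bound termwise: for each fixed $\ell_0\in\Lambda$ and a.e.\ $h$, $|\mathbf{f}(\ell_0 h)|^2\mathbf{w}(\ell_0 h)\le \Phi(h)\le B|C|^{-1}$. Left-invariance of $d\mu_H$ lets me substitute $y=\ell_0 h$ and conclude $|\mathbf{f}(y)|^2\mathbf{w}(y)\le B|C|^{-1}$ for a.e.\ $y\in H$. Since $\mathbf{f}$ has compact support inside $\mathcal{O}$ and by Lemma \ref{Jacob} the function $\mathbf{w}$ is continuous and strictly positive on $\mathcal{O}$, it attains a positive lower bound $c>0$ on the compact set $\mathrm{supp}(\mathbf{f})$, so $|\mathbf{f}(y)|^2\le B/(c|C|)$ a.e.

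There is essentially no hard step here: the entire content is already encoded in the Plancherel identity established in Lemma \ref{FProperty}, and the only mild subtlety is the routine passage from the integrated inequality to the pointwise one, which is purely measure-theoretic.
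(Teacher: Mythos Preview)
Your proposal is correct and follows essentially the same approach as the paper: both reduce to the identity $\mathrm{s}(\Lambda,\Gamma_N,\mathbf{g},\mathbf{f})=|C|\int_H|\mathbf{g}(h)|^2\Phi(h)\,d\mu_H(h)$ obtained in the proof of Lemma~\ref{FProperty}, and then extract pointwise bounds on $\Phi$. The only cosmetic difference is that the paper argues by contradiction using indicator functions $\mathbf{g}=1_E$ of sets where the bound fails, whereas you invoke continuous bumps and Lebesgue differentiation; both are standard and equivalent ways to pass from the integrated inequality to the pointwise one.
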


\begin{proof}
The proof goes as follows. Put
\[
\mathrm{s}\left(  \Lambda,\Gamma_{N},\mathbf{g,f}\right)  =\sum_{\ell
\in\Lambda}\sum_{\exp\left(  PX\right)  \in\Gamma_{N}}\left\vert \left\langle
\mathbf{g},\pi\left(  \ell^{-1}\right)  \pi\left(  \exp\left(  PX\right)
\right)  \mathbf{f}\right\rangle _{\mathfrak{H}}\right\vert ^{2}.
\]
By assumption, given $\mathbf{g}\in\mathfrak{H,}$
\begin{align*}
\mathrm{s}\left(  \Lambda,\Gamma_{N},\mathbf{g,f}\right)   &  =\int
_{H}\left\vert \mathbf{g}\left(  h\right)  \right\vert ^{2}\left(  \left\vert
C\right\vert \sum_{\ell\in\Lambda}\left\vert \mathbf{f}\left(  \ell h\right)
\mathbf{w}\left(  \ell h\right)  ^{1/2}\right\vert ^{2}\right)  d\mu
_{H}\left(  h\right) \\
&  \geq\int_{H}A\left\vert \mathbf{g}\left(  h\right)  \right\vert ^{2}%
d\mu_{H}\left(  h\right)  .
\end{align*}
As such,
\[
\int_{H}\left\vert \mathbf{g}\left(  h\right)  \right\vert ^{2}\left(
\left\vert C\right\vert \sum_{\ell\in\Lambda}\left\vert \mathbf{f}\left(  \ell
h\right)  \mathbf{w}\left(  \ell h\right)  ^{1/2}\right\vert ^{2}-A\right)
d\mu_{H}\left(  h\right)  \geq0.
\]
Suppose that
\[
\left\vert C\right\vert \sum_{\ell\in\Gamma_{H}}\left\vert \mathbf{f}\left(
\ell h\right)  \mathbf{w}\left(  \ell h\right)  ^{1/2}\right\vert ^{2}<A
\]
on some subset $E\subseteq H$ of positive and finite Haar measure. Letting
$\mathbf{g}$ be the indicator function of the set $E,$
\begin{align*}
\mathrm{s}\left(  \Lambda,\Gamma_{N},\mathbf{g,f}\right)   &  =\int
_{E}\left\vert C\right\vert \cdot\sum_{\ell\in\Lambda}\left\vert
\mathbf{f}\left(  \ell h\right)  \mathbf{w}\left(  \ell h\right)
^{1/2}\right\vert ^{2}d\mu_{H}\left(  h\right) \\
&  <A\cdot\int_{E}d\mu_{H}\left(  h\right) \\
&  =A\left\Vert \mathbf{g}\right\Vert ^{2}.
\end{align*}
Thus, $A\left\Vert \mathbf{g}\right\Vert ^{2}>\mathrm{s}\left(  \Lambda
,\Gamma_{N},\mathbf{g,f}\right)  .$ This contradicts our assumption.
Similarly, suppose that
\[
\left\vert C\right\vert \cdot\sum_{\ell\in\Lambda}\left\vert \mathbf{f}\left(
\ell h\right)  \mathbf{w}\left(  \ell h\right)  ^{1/2}\right\vert ^{2}>B
\]
on some subset $F$ of positive and finite Haar measure. Letting $\mathbf{g}$
be the indicator function of the set $F,$
\[
\sum_{\ell\in\Lambda}\sum_{\exp\left(  PX\right)  \in\Gamma_{N}}\left\vert
\left\langle \mathbf{g},\pi\left(  \ell^{-1}\right)  \pi\left(  \exp\left(
PX\right)  \right)  \mathbf{f}\right\rangle _{\mathfrak{H}}\right\vert
^{2}>B\left\Vert \mathbf{g}\right\Vert ^{2}%
\]
and this contradicts that $\mathcal{S}\left(  \mathbf{f,}\Lambda^{-1}%
\Gamma_{N}\right)  $ is a frame for $\mathfrak{H}$ with upper frame bound $B.$
\end{proof}

The following theorem offers concrete conditions for the existence of Parseval
frames and orthonormal bases of the type
\[
\mathcal{S}\left(  \mathbf{s}\cdot1_{A}\mathbf{,}\Gamma_{H}^{-1}\Gamma
_{N}\right)
\]
for some measurable function $\mathbf{s}$ defined on $\mathcal{O}$ and a
$d\mu_{H}$-measurable subset $A$ of $\mathcal{O}.$ \newpage

\begin{proposition}
\label{criteria} \text{ }

\begin{enumerate}
\item Let $\mathbf{f}$ be a vector in $L^{2}\left(  \mathcal{O},d\mu
_{H}\right)  $ satisfying Condition \ref{Cond} (7). If additionally, Condition
\ref{Cond} (2) and (6)\ hold then $\mathcal{S}\left(  \left\vert C\right\vert
^{-1/2}\mathbf{f,}\Gamma_{H}^{-1}\Gamma_{N}\right)  $ is a Parseval frame for
$\mathfrak{H}$.

\item Let $\mathbf{f}$ be a vector in $L^{2}\left(  \mathcal{O},d\mu
_{H}\right)  $ satisfying Condition \ref{Cond} (7) and (8). If additionally,
Condition \ref{Cond} (2) and (6)\ hold then $\mathcal{S}\left(  \left\vert
C\right\vert ^{-1/2}\mathbf{f,}\Gamma_{H}^{-1}\Gamma_{N}\right)  $ is an
orthonormal basis for $\mathfrak{H}.$
\end{enumerate}
\end{proposition}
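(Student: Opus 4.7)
The plan is to reduce both parts to Lemma \ref{FProperty} after verifying that the choice of $\mathbf{f}$ in Condition \ref{Cond}(7) produces the ``best possible'' frame bounds. First, I would unravel Condition \ref{Cond}(7): setting $h=\varphi^{-1}(\Theta_\lambda^{-1}(\xi))$ and using $\mathbf{w}(h)=\mathbf{W}_\lambda(\Theta_\lambda(\varphi(h)))$ yields the pointwise identity $\mathbf{f}(h)=\mathbf{w}(h)^{-1/2}\cdot 1_{\mathcal{O}}(h)$, hence
\[
|\mathbf{f}(h)\sqrt{\mathbf{w}(h)}|^{2}=1_{\mathcal{O}}(h)\quad\text{for a.e. }h\in H.
\]
Replacing $h$ by $\ell h$ converts this into $1_{\ell^{-1}\mathcal{O}}(h)$, and Condition \ref{Cond}(6) (tiling of $H$ by $\{\ell^{-1}\mathcal{O}\}_{\ell\in\Gamma_{H}}$) gives $\sum_{\ell\in\Gamma_{H}}1_{\ell^{-1}\mathcal{O}}(h)=1$ for a.e. $h\in H$. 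Therefore $m_{H,\mathbf{f}}=M_{H,\mathbf{f}}=1$, which automatically verifies Condition \ref{Cond}(1), (3), and (4) for the constant $|C|^{-1}$.

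With this in hand I would invoke Lemma \ref{FProperty} (and its rescaling under $\mathbf{f}\mapsto|C|^{-1/2}\mathbf{f}$, which multiplies the frame bounds by $|C|^{-1}$) to conclude that $\mathcal{S}(|C|^{-1/2}\mathbf{f},\Gamma_{H}^{-1}\Gamma_{N})$ is a tight frame for $\mathfrak{H}$ with bounds $|C|\cdot m_{H,\mathbf{f}}\cdot|C|^{-1}=1$ and $|C|\cdot M_{H,\mathbf{f}}\cdot|C|^{-1}=1$, i.e.\ a Parseval frame. The one subtle point, which will be the main obstacle, is that Lemma \ref{FProperty} was stated for continuous $\mathbf{f}$, whereas the $\mathbf{f}$ of Condition \ref{Cond}(7) need not be continuous. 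I would dispatch this by re-examining the computation in the proof of Lemma \ref{FProperty}: all that is actually needed for the Plancherel step on $L^{2}(C,d\xi)$ is that, for $\mathbf{g}$ in a dense subclass (say continuous of compact support), the function $\xi\mapsto\mathbf{g}(\ell^{-1}u(\xi))\,\overline{\mathbf{f}(u(\xi))}\,\mathbf{W}_{\lambda}(\xi)\,1_{\Theta_{\lambda}(\varphi(\mathcal{O}))}(\xi)$ lies in $L^{2}(C,d\xi)$. For our $\mathbf{f}$ this integrand simplifies to $\mathbf{g}(\ell^{-1}u(\xi))\,\mathbf{W}_{\lambda}(\xi)^{1/2}\,1_{\Theta_{\lambda}(\varphi(\mathcal{O}))}(\xi)$, which is bounded on the relatively compact set $\Theta_{\lambda}(\varphi(\mathcal{O}))\subseteq C$ by Lemma \ref{Jacob}; so Plancherel applies, the rest of the chain of equalities in Lemma \ref{FProperty} carries through verbatim, and the Parseval conclusion follows from density.

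For part (2) I would simply combine the Parseval property from (1) with Condition \ref{Cond}(8), which says $\||C|^{-1/2}\mathbf{f}\|_{\mathfrak{H}}=1$. Since $\pi$ acts unitarily, every element of $\mathcal{S}(|C|^{-1/2}\mathbf{f},\Gamma_{H}^{-1}\Gamma_{N})$ has norm $1$. A standard fact about Parseval frames then finishes the proof: if $\{\mathbf{e}_{k}\}$ is a Parseval frame with $\|\mathbf{e}_{k}\|=1$ for every $k$, then expanding $1=\|\mathbf{e}_{k}\|^{2}=\sum_{j}|\langle\mathbf{e}_{k},\mathbf{e}_{j}\rangle|^{2}=1+\sum_{j\neq k}|\langle\mathbf{e}_{k},\mathbf{e}_{j}\rangle|^{2}$ forces $\langle\mathbf{e}_{k},\mathbf{e}_{j}\rangle=0$ for $j\neq k$, so $\mathcal{S}(|C|^{-1/2}\mathbf{f},\Gamma_{H}^{-1}\Gamma_{N})$ is an orthonormal basis of $\mathfrak{H}$.
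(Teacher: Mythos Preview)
Your argument is correct and follows the same essential route as the paper: both proofs hinge on the Plancherel identity for the trigonometric system on $C$ combined with the tiling $\{\ell^{-1}\mathcal{O}\}_{\ell\in\Gamma_H}$, and both finish Part~(2) via the standard fact that a unit-norm Parseval frame is an orthonormal basis. The only real difference is organizational: the paper re-derives the key identity $\mathrm{s}(\Gamma_H,\Gamma_N,\mathbf{g},\mathbf{f})=|C|\,\|\mathbf{g}\|^2$ from scratch for this particular $\mathbf{f}$, whereas you recognize that Condition~(7) forces $|\mathbf{f}(h)\sqrt{\mathbf{w}(h)}|^2=1_{\mathcal{O}}(h)$ and hence $m_{H,\mathbf{f}}=M_{H,\mathbf{f}}=1$, allowing a direct appeal to Lemma~\ref{FProperty}; your remark that the continuity hypothesis in that lemma is only used to ensure the integrand lies in $L^2(C)$, and that this holds here anyway, is the right patch. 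One small slip: you write that $m_{H,\mathbf{f}}=M_{H,\mathbf{f}}=1$ ``verifies Condition~\ref{Cond}(4) for the constant $|C|^{-1}$,'' but Condition~(4) asks for $m_{H,\mathbf{f}}=M_{H,\mathbf{f}}=|C|^{-1}$, which is not what you have; since you don't actually use Condition~(4) afterwards (you rescale the frame bounds directly), this is harmless, but the sentence should be deleted or corrected.
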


\begin{proof}
First, we observe that if $\mathbf{f}$ is a function defined on $\mathcal{O}$
such that
\[
\mathbf{f}\left(  \varphi^{-1}\left(  \Theta_{\lambda}^{-1}\left(  \xi\right)
\right)  \right)  \mathbf{W}_{\lambda}\left(  \xi\right)  ^{1/2}%
=1_{\Theta_{\lambda}\left(  \varphi\left(  \mathcal{O}\right)  \right)
}\left(  \xi\right)  ,
\]
then $\mathbf{f}\in L^{2}\left(  \mathcal{O},d\mu_{H}\right)  .$ Indeed,
\begin{align*}
\int_{\mathcal{O}}\left\vert \mathbf{f}\left(  h\right)  \right\vert ^{2}%
d\mu_{H}\left(  h\right)   &  =\int_{\varphi\left(  \mathcal{O}\right)
}\left\vert \mathbf{f}\left(  \varphi^{-1}\left(  A\right)  \right)
\right\vert ^{2}d\mu_{H}\left(  \varphi^{-1}\left(  A\right)  \right) \\
^{\text{(see (\ref{W}))}}  &  =\int_{\Theta_{\lambda}\left(  \varphi\left(
\mathcal{O}\right)  \right)  }\left\vert \mathbf{f}\left(  \varphi^{-1}\left(
\Theta_{\lambda}^{-1}\left(  \xi\right)  \right)  \right)  \right\vert
^{2}\mathbf{W}_{\lambda}\left(  \xi\right)  d\xi
\end{align*}
and
\begin{align*}
\int_{\mathcal{O}}\left\vert \mathbf{f}\left(  h\right)  \right\vert ^{2}%
d\mu_{H}\left(  h\right)   &  =\int_{\Theta_{\lambda}\left(  \varphi\left(
\mathcal{O}\right)  \right)  }\left\vert \mathbf{W}_{\lambda}\left(
\xi\right)  ^{1/2}\mathbf{f}\left(  \varphi^{-1}\left(  \Theta_{\lambda}%
^{-1}\left(  \xi\right)  \right)  \right)  \right\vert ^{2}d\xi\\
&  =\left\vert \Theta_{\lambda}\left(  \varphi\left(  \mathcal{O}\right)
\right)  \right\vert <\infty.
\end{align*}
Given $X\in\mathfrak{p}$, and letting $\mathbf{g}$ be a continuous function
defined on $\mathcal{O}$, we obtain
\begin{align*}
I  &  =\left\langle \mathbf{g},\pi\left(  \exp X\right)  \mathbf{f}%
\right\rangle _{\mathfrak{H}}\\
&  =\int_{\mathcal{O}}\mathbf{g}\left(  h\right)  e^{2\pi i\left\langle
Ad\left(  h^{-1}\right)  ^{\ast}\lambda,X\right\rangle }\overline
{\mathbf{f}\left(  h\right)  }d\mu_{H}\left(  h\right) \\
^{\left(  \beta_{\lambda}\left(  \varphi^{-1}\left(  A\right)  \right)
=Ad\left(  h^{-1}\right)  ^{\ast}\lambda\right)  }  &  =\int_{\varphi\left(
\mathcal{O}\right)  }\mathbf{g}\left(  \varphi^{-1}\left(  A\right)  \right)
e^{2\pi i\left\langle \beta_{\lambda}\left(  \varphi^{-1}\left(  A\right)
\right)  ,X\right\rangle }\overline{\mathbf{f}\left(  \varphi^{-1}\left(
A\right)  \right)  }d\mu_{H}\left(  \varphi^{-1}\left(  A\right)  \right) \\
^{\text{Since }X=P^{\ast}X\in\mathfrak{p}}  &  =\int_{\varphi\left(
\mathcal{O}\right)  }\mathbf{g}\left(  \varphi^{-1}\left(  A\right)  \right)
e^{2\pi i\left\langle P^{\ast}\beta_{\lambda}\varphi^{-1}\left(  A\right)
,X\right\rangle }\overline{\mathbf{f}\left(  \varphi^{-1}\left(  A\right)
\right)  }d\mu_{H}\left(  \varphi^{-1}\left(  A\right)  \right) \\
^{\Theta_{\lambda}\left(  A\right)  =P^{\ast}\beta_{\lambda}\varphi
^{-1}\left(  A\right)  }  &  =\int_{\varphi\left(  \mathcal{O}\right)
}\mathbf{g}\left(  \varphi^{-1}\left(  A\right)  \right)  e^{2\pi
i\left\langle \Theta_{\lambda}\left(  A\right)  ,X\right\rangle }%
\overline{\mathbf{f}\left(  \varphi^{-1}\left(  A\right)  \right)  }d\mu
_{H}\left(  \varphi^{-1}\left(  A\right)  \right)  .
\end{align*}
Next, the change of variable
\[
\xi=\Theta_{\lambda}\left(  A\right)  \Leftrightarrow A=\Theta_{\lambda}%
^{-1}\left(  \xi\right)
\]
yields
\[
I=\int_{\Theta_{\lambda}\left(  \varphi\left(  \mathcal{O}\right)  \right)
}e^{2\pi i\left\langle \xi,X\right\rangle }\left(  \mathbf{g}\left(
\varphi^{-1}\left(  \Theta_{\lambda}^{-1}\left(  \xi\right)  \right)  \right)
\overline{\mathbf{f}\left(  \varphi^{-1}\left(  \Theta_{\lambda}^{-1}\left(
\xi\right)  \right)  \right)  }\mathbf{W}_{\lambda}\left(  \xi\right)
\right)  d\xi.
\]
Let $C$ be a compact subset of $\mathfrak{n}^{\ast}$ containing $\Theta
_{\lambda}\left(  \varphi\left(  \mathcal{O}\right)  \right)  \ $such that
\begin{equation}
\left\{  \frac{e^{2\pi i\left\langle \xi,X\right\rangle }\cdot1_{\Theta
_{\lambda}\left(  \varphi\left(  \mathcal{O}\right)  \right)  }\left(
\xi\right)  }{\left\vert C\right\vert ^{1/2}}:\exp\left(  X\right)  \in
\Gamma_{N}\right\}
\end{equation}
is a Parseval frame for $L^{2}\left(  \Theta_{\lambda}\left(  \varphi\left(
\mathcal{O}\right)  \right)  \right)  .$ Then%
\begin{align*}
I_{1}  &  =\sum_{\exp X\in\Gamma_{N}}\left\vert \left\langle \mathbf{g}%
,\pi\left(  \exp X\right)  \mathbf{f}\right\rangle _{\mathfrak{H}}\right\vert
^{2}\\
&  =\sum_{\exp X\in\Gamma_{N}}\left\vert \int_{\Theta_{\lambda}\left(
\varphi\left(  \mathcal{O}\right)  \right)  }\frac{e^{2\pi i\left\langle
\xi,X\right\rangle }}{\left\vert C\right\vert ^{1/2}}\mathbf{g}\left(
\varphi^{-1}\left(  \Theta_{\lambda}^{-1}\left(  \xi\right)  \right)  \right)
\left\vert C\right\vert ^{1/2}\mathbf{W}_{\lambda}\left(  \xi\right)
^{1/2}d\xi\right\vert ^{2}.
\end{align*}
Letting
\[
u\left(  \xi\right)  =\varphi^{-1}\left(  \Theta_{\lambda}^{-1}\left(
\xi\right)  \right)  ,
\]
we obtain%
\begin{align*}
I_{1}  &  =\int_{\Theta_{\lambda}\left(  \varphi\left(  \mathcal{O}\right)
\right)  }\left\vert \left\vert C\right\vert ^{1/2}\mathbf{g}\left(  u\left(
\xi\right)  \right)  \mathbf{W}_{\lambda}\left(  \xi\right)  ^{1/2}\right\vert
^{2}d\xi\\
&  =\int_{\Theta_{\lambda}\left(  \varphi\left(  \mathcal{O}\right)  \right)
}\left\vert C\right\vert \cdot\left\vert \mathbf{g}\left(  u\left(
\xi\right)  \right)  \right\vert ^{2}\mathbf{W}_{\lambda}\left(  \xi\right)
d\xi\\
&  =\left\vert C\right\vert \cdot\int_{\mathcal{O}}\left\vert \mathbf{g}%
\left(  h\right)  \right\vert ^{2}d\mu_{H}\left(  h\right)  .
\end{align*}
Assuming the existence of a discrete set $\Gamma_{H}\subset H$ such that
$\left\{  \ell^{-1}\mathcal{O}:\ell\in\Gamma_{H}\right\}  $ is a tiling of
$H$; since the operators $\pi\left(  \ell\right)  , \ell\in\Gamma_{H}$ are
unitary, it immediately follows that
\begin{align*}
\sum_{\ell\in\Gamma_{H}}\sum_{\exp X\in\Gamma_{N}}\left\vert \left\langle
\mathbf{g},\pi\left(  \ell^{-1}\right)  \pi\left(  \exp X\right)
\mathbf{f}\right\rangle \right\vert ^{2}  &  =\sum_{\ell\in\Gamma_{H}}%
\sum_{\exp X\in\Gamma_{N}}\left\vert \left\langle \pi\left(  \ell\right)
\mathbf{g}\cdot1_{\mathcal{O}},1_{\mathcal{O}}\cdot\pi\left(  \exp X\right)
\mathbf{f}\right\rangle \right\vert ^{2}\\
&  =\left\vert C\right\vert \cdot\sum_{\ell\in\Gamma_{H}}\left\Vert \pi\left(
\ell\right)  \left(  \mathbf{g}\cdot1_{\mathcal{O}}\right)  \right\Vert ^{2}\\
&  =\left\vert C\right\vert \cdot\sum_{\ell\in\Gamma_{H}}\int_{\mathcal{O}%
}\left\vert \mathbf{g}\left(  \ell^{-1}h\right)  \right\vert ^{2}d\mu
_{H}\left(  h\right) \\
&  =\left\vert C\right\vert \cdot\sum_{\ell\in\Gamma_{H}}\int_{\ell
^{-1}\mathcal{O}}\left\vert \mathbf{g}\left(  h\right)  \right\vert ^{2}%
d\mu_{H}\left(  h\right) \\
&  =\left\vert C\right\vert \cdot\int_{H}\left\vert \mathbf{g}\left(
h\right)  \right\vert ^{2}d\mu_{H}\left(  h\right)  .
\end{align*}
Therefore $\left\{  \pi\left(  \ell^{-1}\kappa\right)  \mathbf{f}:\kappa
\in\Gamma_{N},\ell\in\Gamma_{H}\right\}  $ is a tight frame for $L^{2}\left(
H,d\mu_{H}\left(  h\right)  \right)  $ with frame bound $\left\vert
C\right\vert .$ As a result,
\[
\left\{  \pi\left(  \ell^{-1}\kappa\right)  \left\vert C\right\vert
^{-1/2}\mathbf{f}:\kappa\in\Gamma_{N},\ell\in\Gamma_{H}\right\}
\]
is a Parseval frame.

If additionally,
\begin{equation}
\left\vert C\right\vert ^{-1/2}\cdot\left\Vert \mathbf{f}\right\Vert
_{\mathfrak{H}}=\left\vert C\right\vert ^{-1/2}\cdot\left\vert \Theta
_{\lambda}\left(  \varphi\left(  \mathcal{O}\right)  \right)  \right\vert
^{1/2}=1 \label{stepA}%
\end{equation}
then
\begin{align*}
\left\Vert \left\vert C\right\vert ^{-1/2}\mathbf{f}\right\Vert _{\mathfrak{H}%
}  &  =\left(  \left\vert C\right\vert ^{-1}\int_{\mathcal{O}}\left\vert
\mathbf{f}\left(  x\right)  \right\vert ^{2}d\mu_{H}\left(  x\right)  \right)
^{1/2}\\
&  =\left(  \left\vert C\right\vert ^{-1}\int_{\Theta_{\lambda}\left(
\varphi\left(  \mathcal{O}\right)  \right)  }\left\vert \mathbf{f}\left(
\varphi^{-1}\left(  \Theta_{\lambda}^{-1}\left(  \xi\right)  \right)  \right)
\right\vert ^{2}d\mu_{H}\left(  \varphi^{-1}\left(  \Theta_{\lambda}%
^{-1}\left(  \xi\right)  \right)  \right)  \right)  ^{1/2}\\
&  =\left(  \left\vert C\right\vert ^{-1}\int_{\Theta_{\lambda}\left(
\varphi\left(  \mathcal{O}\right)  \right)  }\left\vert \mathbf{f}\left(
\varphi^{-1}\left(  \Theta_{\lambda}^{-1}\left(  \xi\right)  \right)  \right)
\mathbf{W}_{\lambda}\left(  \xi\right)  ^{1/2}\right\vert ^{2}d\xi\right)
^{1/2}\\
&  =\left(  \left\vert C\right\vert ^{-1}\int_{\Theta_{\lambda}\left(
\varphi\left(  \mathcal{O}\right)  \right)  }\frac{1_{\Theta_{\lambda}\left(
\varphi\left(  \mathcal{O}\right)  \right)  }\left(  \xi\right)  }{\left(
\mathbf{W}_{\lambda}\left(  \xi\right)  \right)  ^{1/2}}\left(  \mathbf{W}%
_{\lambda}\left(  \xi\right)  \right)  ^{1/2}d\xi\right)  ^{1/2}\\
&  =\left(  \left\vert C\right\vert ^{-1}\int_{\Theta_{\lambda}\left(
\varphi\left(  \mathcal{O}\right)  \right)  }d\xi\right)  ^{1/2}\\
^{\text{By (\ref{stepA})}}  &  =\left\vert C\right\vert ^{-1/2}\cdot\left\vert
\Theta_{\lambda}\left(  \varphi\left(  \mathcal{O}\right)  \right)
\right\vert ^{1/2}=1.
\end{align*}
Therefore,
\[
\left\{  \pi\left(  \ell^{-1}\kappa\right)  \left(  \left\vert C\right\vert
^{-1/2}\mathbf{f}\right)  :\kappa\in\Gamma_{N},\ell\in\Gamma_{H}\right\}
\]
is an orthonormal basis for $\mathfrak{H}.$
\end{proof}

We are now in position to establish our main results.

\subsection{Proof of Theorem \ref{main}}

Appealing to Lemma \ref{communicated} and Lemma \ref{tstar}, we see that the
sufficient conditions stated of Lemma \ref{FProperty} hold for any function
$\mathbf{f}$ which is continuous (or smooth) and supported on $\mathcal{O}.$

\subsection{Proof of Theorem \ref{PFEXP}}

We will start this section by reviewing some fundamental concepts of
exponential solvable Lie groups \cite{varadarajan2013lie}. Let $\mathfrak{h}$
be a real solvable Lie algebra of dimension $r.$ We define the derived series
of $\mathfrak{h}$ inductively as follows
\[
D^{1}\mathfrak{h}=\left[  \mathfrak{h,h}\right]  ,\ D^{k}\mathfrak{h}=\left[
D^{k-1}\mathfrak{h,}D^{k-1}\mathfrak{h}\right]  .
\]
Here $\left[  \mathfrak{h,h}\right]  $ is the real span of $\left[
X,Y\right]  $ for $X,Y\in\mathfrak{h}$. Moreover, we say that $\mathfrak{h}$
is solvable if there exists a natural number $m$ such that $\dim\left(
D^{m}\mathfrak{h}\right)  =0.$ The smallest such natural number $m$ is called
the length of $\mathfrak{h}.$ For example, it is not hard to verify that the
algebra of upper triangular matrices is a solvable Lie algebra.

Next, for a Lie algebra $\mathfrak{h,}$ the descending central series is
defined by the sequence
\[
\mathfrak{h}^{\left(  0\right)  }=\mathfrak{h,h}^{\left(  1\right)  }=\left[
\mathfrak{h,h}\right]  ,\cdots,\mathfrak{h}^{\left(  k\right)  }=\left[
\mathfrak{h}^{\left(  k-1\right)  }\mathfrak{,h}\right]
\]
and $\mathfrak{h}$ is called nilpotent if there exists a natural number $m$
such that $\dim\mathfrak{h}^{\left(  m\right)  }=0.$ It is easy to verify that
all nilpotent Lie algebras are solvable. However, the containment of nilpotent
Lie algebras in the set of solvable algebras is strictly proper.

A connected Lie group $H$ is said to be solvable if its Lie algebra is
solvable. Keep in mind that even if $H$ is simply connected and solvable, the
exponential map may still fail to be bijective. For a solvable Lie group $H$
with Lie algebra $\mathfrak{h,}$ when the corresponding exponential map is a
bijection, we say that such a group is \textbf{exponential} and its Lie
algebra is also called exponential. Such a group is necessarily connected and
simply connected as well.

The following lemmas will be instrumental

\begin{lemma}
\label{VAR0}\cite[Corollary 3.7.5.]{varadarajan2013lie} Let $\mathfrak{h}$ be
a solvable Lie algebra over the reals. Then we can find subalgebras
$\mathfrak{h=h}_{1},\mathfrak{h}_{2},\cdots,\mathfrak{h}_{r}=\left\{
0\right\}  $ such that (1) Each $\mathfrak{h}_{k+1}\subseteq\mathfrak{h}_{k}$
and $\mathfrak{h}_{k+1}$ is an ideal of $\mathfrak{h}_{k}$ and (2)
$\dim\left(  \mathfrak{h}_{k}/\mathfrak{h}_{k+1}\right)  =1$ for $k\in\left\{
1,\cdots,r\right\}  .$
\end{lemma}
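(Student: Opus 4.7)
The plan is to prove this by induction on $r = \dim\mathfrak{h}$, producing the chain one ideal at a time from the top down. The base case $r = 0$ is vacuous. For the inductive step, the key is to exhibit, inside any nonzero solvable $\mathfrak{h}$, a codimension-$1$ ideal $\mathfrak{h}_2$; once this is done, $\mathfrak{h}_2$ is itself a solvable Lie algebra (any subalgebra of a solvable Lie algebra is solvable, as its derived series is contained in the derived series of $\mathfrak{h}$), and the inductive hypothesis applied to $\mathfrak{h}_2$ supplies the tail $\mathfrak{h}_2 \supseteq \mathfrak{h}_3 \supseteq \cdots \supseteq \mathfrak{h}_r = \{0\}$ with each $\mathfrak{h}_{k+1}$ an ideal in $\mathfrak{h}_k$.

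To construct $\mathfrak{h}_2$, I would pass to the abelianization $\mathfrak{h}/[\mathfrak{h},\mathfrak{h}]$. The main point needed here is that the derived ideal is a proper subalgebra, i.e.\ $[\mathfrak{h},\mathfrak{h}] \subsetneq \mathfrak{h}$ whenever $\mathfrak{h} \neq 0$ is solvable. This is immediate from the definition of the derived series: if $[\mathfrak{h},\mathfrak{h}] = \mathfrak{h}$ one gets by iteration $D^k\mathfrak{h} = \mathfrak{h}$ for every $k$, contradicting $D^m\mathfrak{h} = \{0\}$ for some $m$. Consequently $\mathfrak{h}/[\mathfrak{h},\mathfrak{h}]$ is a nonzero abelian Lie algebra, and within it every vector subspace is automatically an ideal. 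Selecting any codimension-$1$ subspace and pulling it back along the quotient map $\mathfrak{h} \to \mathfrak{h}/[\mathfrak{h},\mathfrak{h}]$ yields a codimension-$1$ ideal $\mathfrak{h}_2 \trianglelefteq \mathfrak{h}$, which is in particular an ideal of $\mathfrak{h} = \mathfrak{h}_1$ as required.

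Assembling the pieces, the chain is built as follows: set $\mathfrak{h}_1 := \mathfrak{h}$ and $\mathfrak{h}_2$ as above; by induction choose $\mathfrak{h}_2 = \mathfrak{k}_1 \supseteq \mathfrak{k}_2 \supseteq \cdots \supseteq \mathfrak{k}_{r-1} = \{0\}$ inside $\mathfrak{h}_2$ with each $\mathfrak{k}_{j+1}$ an ideal of $\mathfrak{k}_j$ and successive codimensions equal to one, and relabel $\mathfrak{h}_{j+1} := \mathfrak{k}_j$. The chain satisfies $\dim(\mathfrak{h}_k/\mathfrak{h}_{k+1}) = 1$ and the local ideal property at each step. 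I expect no serious obstacle here; the only subtlety worth emphasizing is that the lemma demands $\mathfrak{h}_{k+1}$ be an ideal only inside $\mathfrak{h}_k$ (not inside $\mathfrak{h}$), which is precisely what the inductive construction furnishes and is why the argument does not need any deeper structural results such as Lie's theorem.
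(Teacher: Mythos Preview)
Your argument is correct and is the standard proof of this classical fact. Note that the paper does not actually supply a proof of this lemma; it is quoted verbatim as \cite[Corollary 3.7.5]{varadarajan2013lie} and used without further justification, so there is no in-paper proof to compare against.
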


\begin{lemma}
\label{VAR}\cite[Theorem 3.18.11]{varadarajan2013lie} Let $H$ be a simply
connected solvable Lie group with Lie algebra $\mathfrak{h.}$ Suppose that
$\left\{  A_{1},\cdots,A_{r}\right\}  $ is a basis \footnote{The existence of
such a basis a direct consequence of Lemma \ref{VAR0}} for $\mathfrak{h}$
satisfying the following property: $\mathfrak{h}_{\left(  j\right)  }%
=\sum_{k=1}^{j}%
%TCIMACRO{\U{211d} }%
%BeginExpansion
\mathbb{R}
%EndExpansion
A_{k}$ is a subalgebra of $\mathfrak{h}$ and each $\mathfrak{h}_{\left(
i\right)  }$ is an ideal in $\mathfrak{h}_{\left(  i+1\right)  }.$ Then the
map%
\[
\left(  a_{1},\cdots,a_{r}\right)  \mapsto\exp\left(  a_{1}A_{1}\right)
\cdots\exp\left(  a_{r}A_{r}\right)
\]
is an analytic diffeomorphism of $%
%TCIMACRO{\U{211d} }%
%BeginExpansion
\mathbb{R}
%EndExpansion
^{r}$ onto $H.$
\end{lemma}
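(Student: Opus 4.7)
The plan is to argue by induction on $r = \dim \mathfrak{h}$. For the base case $r=1$, any $1$-dimensional simply connected Lie group is analytically isomorphic to $\mathbb{R}$ via $\exp$, so the statement is immediate. For the inductive step, I assume the result for all simply connected solvable groups of dimension less than $r$, and exploit the codimension-$1$ ideal $\mathfrak{h}_{(r-1)}$ singled out by the hypothesis.

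Let $K$ denote the connected Lie subgroup of $H$ whose Lie algebra is $\mathfrak{h}_{(r-1)}$. The key structural input I would invoke is that in a simply connected solvable Lie group, the connected subgroup attached to any subalgebra is automatically closed and simply connected (this is standard for simply connected solvable groups; see for instance Theorem 3.18.2 of Varadarajan). Since $\mathfrak{h}_{(r-1)}$ is an ideal, $K$ is normal in $H$, and $H/K$ is a Lie group of dimension one. The homotopy exact sequence of $K \hookrightarrow H \to H/K$ together with simple connectedness of $H$ and connectedness of $K$ yields that $H/K$ is simply connected, hence $H/K \cong \mathbb{R}$. Because $A_r$ projects to a nonzero element of $\mathfrak{h}/\mathfrak{h}_{(r-1)}$, the map $t \mapsto \exp(tA_r)K$ is an analytic isomorphism $\mathbb{R} \to H/K$; let $p : H \to \mathbb{R}$ denote the composition with the quotient projection.

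Applying the inductive hypothesis to $K$ with basis $A_1,\ldots,A_{r-1}$ (which still satisfies the ideal-flag property inside $\mathfrak{h}_{(r-1)}$) produces an analytic diffeomorphism $\Phi_{r-1} : \mathbb{R}^{r-1} \to K$, $(a_1,\ldots,a_{r-1}) \mapsto \exp(a_1A_1)\cdots\exp(a_{r-1}A_{r-1})$. It then remains to show that the multiplication map $\mu : K \times \mathbb{R} \to H$, $\mu(k,t) = k\exp(tA_r)$, is an analytic diffeomorphism. For surjectivity, given $h \in H$ set $t = p(h)$; then $h\exp(-tA_r)$ projects to $0$ in $H/K$, hence belongs to $K$, and $h = (h\exp(-tA_r))\exp(tA_r)$. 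For injectivity, apply $p$ to any identity $k_1\exp(t_1A_r) = k_2\exp(t_2A_r)$ to obtain $t_1 = t_2$, whence $k_1 = k_2$. The inverse $\mu^{-1}(h) = (h\exp(-p(h)A_r),\, p(h))$ is analytic because $p$, $\exp$, and the group operations are. Composing, $\Phi = \mu \circ (\Phi_{r-1} \times \mathrm{id})$ is the desired analytic diffeomorphism $\mathbb{R}^r \to H$.

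The main obstacle is the structural claim that $K$ is closed and simply connected. Without it, $K$ would be merely an immersed subgroup (as with irrational winding subgroups of tori in the non-simply-connected setting), and $H/K$ would not automatically inherit a manifold structure allowing the quotient argument above. It is precisely the conjunction of simple connectedness with solvability that rules this out. Once this structural fact is granted, the proof reduces to the smooth semidirect decomposition $H \cong K \rtimes \mathbb{R}$ together with the inductive hypothesis on $K$, and the analyticity of every ingredient (exponential, multiplication, quotient) upgrades the resulting smooth diffeomorphism to an analytic one.
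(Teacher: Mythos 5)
The paper offers no proof of this lemma at all: it is quoted verbatim as Theorem 3.18.11 of Varadarajan, so there is nothing internal to compare your argument against. Taken on its own terms, your induction is correct and is essentially the textbook proof of that theorem: peel off the codimension-one ideal $\mathfrak{h}_{(r-1)}$, identify $H$ analytically with $K\times\mathbb{R}$ via $(k,t)\mapsto k\exp(tA_r)$, and recurse. The surjectivity/injectivity argument via the analytic section $p:H\to\mathbb{R}$ and the explicit analytic inverse $h\mapsto(h\exp(-p(h)A_r),p(h))$ are all sound, and the flag hypothesis is indeed inherited by $\{A_1,\dots,A_{r-1}\}$ inside $\mathfrak{h}_{(r-1)}$.

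The one load-bearing external input is your ``structural claim'' that the analytic subgroup $K$ attached to $\mathfrak{h}_{(r-1)}$ is closed and simply connected; without it the whole quotient construction collapses, as you correctly note. That claim is itself a theorem of the same depth as the lemma being proved (it is the Chevalley--Malcev closedness theorem for analytic subgroups of simply connected solvable groups, appearing in the same section of Varadarajan), so your proof is not more elementary than the citation --- it trades one black box for another. Two small remarks: (i) since $\mathfrak{h}_{(r-1)}$ is an ideal here, you only need the (easier) normal case, where closedness of a connected normal analytic subgroup of a simply connected group and simple connectedness of $K$ follow from the covering-space/homotopy argument you already deploy for $H/K$, so you could make the proof genuinely self-contained by leaning on normality rather than on the general subalgebra statement; (ii) you should note in passing that $\exp_K$ agrees with the restriction of $\exp_H$ to $\mathfrak{h}_{(r-1)}$, so the factors $\exp(a_jA_j)$ produced by the inductive hypothesis for $K$ are the same group elements appearing in the statement for $H$. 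Neither point is a gap, only a matter of tightening the exposition.
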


Moving forward, we will also need the following lemma. Let $\mathcal{E}\left(
C,\Gamma_{N}\right)  $ be as defined in (\ref{EC}).

\begin{lemma}
\label{lemmata}Let $\mathbf{s}$ be a continuous (or a smooth) function which
is supported on $\mathcal{O}.$ Suppose that $\Gamma_{H}$ is a discrete subset
of $H$ such that
\[
\sum_{\ell\in\Gamma_{H}}\left\vert \mathbf{s}\left(  \ell h\right)
\right\vert ^{2}=1
\]
for almost every $h\in H.$ Let $C$ be a compact subset of $\mathfrak{p}^{\ast
}$ such that $\Theta_{\lambda}\left(  \varphi\left(  \mathcal{O}\right)
\right)  \subseteq C$ and $\Gamma_{N}$ is a discrete subset of $\exp\left(
\mathfrak{p}\right)  $ such that the trigonometric system $\mathcal{E}\left(
C,\Gamma_{N}\right)  $ is an orthonormal basis for $L^{2}\left(
C,d\xi\right)  .$ Then the system
\[
\left\{  \left[  \pi\left(  \ell^{-1}\kappa\right)  \right]  \left\vert
C\right\vert ^{-1/2}\mathbf{sw}^{-1/2}:\left(  \ell,\kappa\right)  \in
\Gamma_{H}\times\Gamma_{N}\right\}
\]
is a Parseval frame generated by the compactly supported and continuous
function $\mathbf{sw}^{-1/2}\left\vert C\right\vert ^{-1/2}.$
\end{lemma}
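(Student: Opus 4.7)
The plan is to verify the Parseval identity directly by inserting $\mathbf{f}=|C|^{-1/2}\mathbf{s}\mathbf{w}^{-1/2}$ into the master formula from the proof of Lemma \ref{FProperty} and exploiting the hypothesis $\sum_{\ell\in\Gamma_{H}}|\mathbf{s}(\ell h)|^{2}=1$ to make the weighted sum collapse to the constant $|C|^{-1}$.

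First I would check that $\mathbf{f}=|C|^{-1/2}\mathbf{s}\mathbf{w}^{-1/2}$ is a legitimate window of the type handled by Lemma \ref{LemmaS} and Lemma \ref{FProperty}, namely continuous and compactly supported in $\mathcal{O}$. By Lemma \ref{Jacob}, $\mathbf{W}_{\lambda}$ is strictly positive and continuous on $\Theta_{\lambda}(\varphi(\mathcal{O}))$, so $\mathbf{w}=\mathbf{W}_{\lambda}\circ\Theta_{\lambda}\circ\varphi$ is strictly positive and continuous on $\mathcal{O}$; hence $\mathbf{w}^{-1/2}$ is continuous on $\mathcal{O}$, and the product $\mathbf{s}\mathbf{w}^{-1/2}$ inherits continuity and compact support from $\mathbf{s}$.

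Next, since the hypotheses on $C$ and $\Gamma_{N}$ match Condition \ref{Cond} (2), I can run verbatim the first half of the proof of Lemma \ref{FProperty} (Lemma \ref{LemmaS} $\to$ Fourier series expansion in $\mathcal{E}(C,\Gamma_{N})$ $\to$ Plancherel on $L^{2}(C,d\xi)$ $\to$ change of variable $h=\varphi^{-1}(\Theta_{\lambda}^{-1}(\xi))$ $\to$ Tonelli) to obtain, for every continuous compactly supported $\mathbf{g}$,
\[
\mathrm{s}(\Gamma_{H},\Gamma_{N},\mathbf{g},\mathbf{f})=|C|\cdot\int_{H}|\mathbf{g}(h)|^{2}\sum_{\ell\in\Gamma_{H}}|\mathbf{f}(\ell h)|^{2}\,\mathbf{w}(\ell h)\,d\mu_{H}(h).
\]
Substituting our choice of $\mathbf{f}$ gives the pointwise identity $|\mathbf{f}(\ell h)|^{2}\mathbf{w}(\ell h)=|C|^{-1}|\mathbf{s}(\ell h)|^{2}$, so by the standing hypothesis $\sum_{\ell}|\mathbf{s}(\ell h)|^{2}=1$ a.e., the inner sum is identically $|C|^{-1}$, and the whole expression collapses to $\int_{H}|\mathbf{g}(h)|^{2}d\mu_{H}(h)=\|\mathbf{g}\|_{\mathfrak{H}}^{2}$. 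Density of $C_{c}(H)$ in $\mathfrak{H}$ then upgrades the identity to all $\mathbf{g}\in\mathfrak{H}$, which is exactly the Parseval frame condition.

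The only place requiring any care is verifying that the Plancherel/Tonelli manipulations from Lemma \ref{FProperty} still apply with our specific $\mathbf{f}$; but since we already observed that $\mathbf{f}$ is continuous and compactly supported in $\mathcal{O}$, this is a direct citation rather than a genuine obstacle. Thus no new analytic difficulty arises, and the lemma reduces to the algebraic cancellation $\mathbf{w}\cdot\mathbf{w}^{-1}=1$ against the partition-of-unity hypothesis on $\mathbf{s}$.
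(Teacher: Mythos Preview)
Your proposal is correct and follows essentially the same route as the paper. The paper's proof sets $\mathbf{f}=\mathbf{s}\mathbf{w}^{-1/2}$, observes that $\sum_{\ell\in\Gamma_{H}}|\mathbf{f}(\ell h)\sqrt{\mathbf{w}(\ell h)}|^{2}=\sum_{\ell\in\Gamma_{H}}|\mathbf{s}(\ell h)|^{2}=1$, and then invokes Lemma~\ref{FProperty}(1) as a black box to conclude tightness (with the $|C|^{-1/2}$ rescaling yielding the Parseval bound); you unpack the same argument by re-running the computation from the proof of Lemma~\ref{FProperty} rather than citing it, but the algebraic cancellation $\mathbf{w}\cdot\mathbf{w}^{-1}=1$ against the partition-of-unity hypothesis is identical in both cases.
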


\begin{proof}
Let us suppose that $\mathbf{s}$ is a continuous (or a smooth) function which
is compactly supported on $\mathcal{O}$ such that $\sum_{\ell\in\Gamma_{H}%
}\left\vert \mathbf{s}\left(  \ell h\right)  \right\vert ^{2}=1$ for all $h\in
H.$ Next, let $\mathbf{f}=\mathbf{sw}^{-1/2}.$ Then
\[
\sum_{\kappa\in\Gamma_{H}}\left\vert \mathbf{f}\left(  \kappa h\right)
\sqrt{\mathbf{w}\left(  \kappa h\right)  }\right\vert ^{2}=\sum_{\kappa
\in\Gamma_{H}}\left\vert \frac{\mathbf{s}\left(  \kappa h\right)  }%
{\sqrt{\mathbf{w}\left(  \kappa h\right)  }}\sqrt{\mathbf{w}\left(  \kappa
h\right)  }\right\vert ^{2}=\sum_{\kappa\in\Gamma_{H}}\left\vert
\mathbf{s}\left(  kh\right)  \right\vert ^{2}=1.
\]
Appealing to Lemma \ref{FProperty} Part (1), the stated result is immediate.
\end{proof}

Suppose that $H$ is a connected exponential solvable Lie group. We will prove
by induction on the dimension of $H$ that there exists a continuous function
$\mathbf{s}$ compactly supported in $\mathcal{O}$ such that
\[
\sum_{\ell\in\Gamma_{H}}\left\vert \mathbf{s}\left(  \ell h\right)
\right\vert ^{2}=1
\]
for every element $h$ in $H.$

For the base case, let us suppose that the dimension of $H$ is equal to $1.$
Moreover, let us suppose that $\mathcal{O}=\left(  -\epsilon^{-1}%
,\epsilon^{-1}\right)  $ for some positive real number $\epsilon.$ Put
\[
\mathbf{s}\left(  t\right)  =\left(  1_{\left[  -1/2,1/2\right]  }%
\ast1_{\left[  -1/2,1/2\right]  }\right)  ^{\frac{1}{2}}\left(  \epsilon
t\right)  .
\]
In the above, $\ast$ stands for the usual convolution product for functions
defined on the real line given by
\[
\left(  f\ast g\right)  \left(  y\right)  =\int_{%
%TCIMACRO{\U{211d} }%
%BeginExpansion
\mathbb{R}
%EndExpansion
}f\left(  x\right)  g\left(  y-x\right)  dx.
\]
Since
\[
\sum_{k\in%
%TCIMACRO{\U{2124} }%
%BeginExpansion
\mathbb{Z}
%EndExpansion
}\left(  1_{\left[  -1/2,1/2\right]  }\ast1_{\left[  -1/2,1/2\right]
}\right)  \left(  t+k\right)  =1\text{ for all }t\in%
%TCIMACRO{\U{211d} }%
%BeginExpansion
\mathbb{R}
%EndExpansion
,
\]
it follows that
\[
\sum_{k\in%
%TCIMACRO{\U{2124} }%
%BeginExpansion
\mathbb{Z}
%EndExpansion
}\left\vert \mathbf{s}\left(  t+\epsilon^{-1}k\right)  \right\vert ^{2}%
=\sum_{k\in%
%TCIMACRO{\U{2124} }%
%BeginExpansion
\mathbb{Z}
%EndExpansion
}\left(  1_{\left[  -1/2,1/2\right]  }\ast1_{\left[  -1/2,1/2\right]
}\right)  \left(  t\epsilon+k\right)  =1
\]
holds for every real number $t.$ Note that the smoothness of $\mathbf{s}$ can
be improved if $\mathbf{s}$ is replaced by a suitable spline-type function of
higher order. To this end, let $\mathbf{b}_{n}$ be the function obtained by
convolving the function $1_{\left[  -1/2,1/2\right]  }$ with itself $n$-many
times. Next, let
\[
\mathbf{s}_{n}\left(  t\right)  =\left(  \mathbf{b}_{n}\right)  ^{\frac{1}{2}%
}\left(  \frac{\epsilon n}{2}t\right)  .
\]
Then
\begin{align*}
\sum_{k\in%
%TCIMACRO{\U{2124} }%
%BeginExpansion
\mathbb{Z}
%EndExpansion
}\left\vert \mathbf{s}_{n}\left(  t+\frac{2}{\epsilon n}k\right)  \right\vert
^{2}  &  =\sum_{k\in%
%TCIMACRO{\U{2124} }%
%BeginExpansion
\mathbb{Z}
%EndExpansion
}\left\vert \left(  \mathbf{b}_{n}\right)  ^{\frac{1}{2}}\left(
\frac{\epsilon n}{2}\left(  t+\frac{2}{\epsilon n}k\right)  \right)
\right\vert ^{2}\\
&  =\sum_{k\in%
%TCIMACRO{\U{2124} }%
%BeginExpansion
\mathbb{Z}
%EndExpansion
}\mathbf{b}_{n}\left(  k+\frac{nt\epsilon}{2}\right) \\
&  =1
\end{align*}
for every $t\in%
%TCIMACRO{\U{211d} }%
%BeginExpansion
\mathbb{R}
%EndExpansion
$ as desired.

Next, let us assume that our result holds true whenever, $\dim H\leq r$ for
some natural number $r\geq1.$ Suppose next that $\dim\left(  H\right)  =r+1.$
Since $H$ is an exponential solvable Lie group, we may write $H=H_{\mathbf{n}%
}H_{\mathbf{c}}$ such that $H_{\mathbf{n}}$ is normal in $H$ and
$H_{\mathbf{c}}$ is a 1-parameter subgroup of $H$ (by appealing to Lemma
\ref{VAR0} and Lemma \ref{VAR}) which we shall identify with the set of real
numbers. Without loss of generality, we may assume that $\mathcal{O=O}%
_{\mathbf{n}}\mathcal{O}_{\mathbf{c}}$ such that $\mathcal{O}_{\mathbf{n}%
}\subset$ $H_{\mathbf{n}}$ and $\mathcal{O}_{\mathbf{c}}\subset H_{\mathbf{c}%
}\ $such that (inductive hypothesis) the following holds true. There exist
$\Gamma_{\mathbf{n}}\subset H_{\mathbf{n}}$ and $\Gamma_{\mathbf{c}}\subset
H_{\mathbf{c}}$ and continuous functions (or smooth functions) $\mathbf{s}%
_{\mathbf{n}}$ and $\mathbf{s}_{\mathbf{c}}$ such that
\[
\sum_{\ell_{\mathbf{n}}\in\Gamma_{\mathbf{n}}}\left\vert \mathbf{s}%
_{\mathbf{n}}\left(  \ell_{\mathbf{n}}h^{\prime}\right)  \right\vert ^{2}=1
\]
$\text{for almost every }h^{\prime}\in H_{\mathbf{n}}$ and
\[
\sum_{\ell_{\mathbf{c}}\in\Gamma_{\mathbf{c}}}\left\vert \mathbf{s}%
_{\mathbf{c}}\left(  \ell_{\mathbf{c}}h^{\prime\prime}\right)  \right\vert
^{2}=1
\]
$\text{ for almost every }h^{\prime\prime}\in H_{\mathbf{c}}.$ Next, we define
a continuous function $\mathbf{s}$ on $H$ as follows. Given $h\in H,$ since we
can uniquely factor $h$ such that $h=h_{\mathbf{n}}h_{\mathbf{c}}$ with
$h_{\mathbf{n}}\in H_{\mathbf{n}}$ and $h_{\mathbf{c}}\in H_{\mathbf{c}},$ we
define
\[
\mathbf{s}\left(  h\right)  =\mathbf{s}\left(  h_{\mathbf{n}}h_{\mathbf{c}%
}\right)  =\mathbf{s}_{\mathbf{n}}\left(  h_{\mathbf{n}}\right)
\cdot\mathbf{s}_{\mathbf{c}}\left(  h_{\mathbf{c}}\right)
\]
for all $h$ in $H.$ Thus, for $h\in H,$
\[
\sum_{\ell_{\mathbf{c}}\in\Gamma_{\mathbf{c}}}\sum_{\ell_{\mathbf{n}}\in
\Gamma_{\mathbf{n}}}\left\vert \mathbf{s}\left(  \ell_{\mathbf{n}}%
\ell_{\mathbf{c}}h\right)  \right\vert ^{2}=\sum_{\ell_{\mathbf{c}}\in
\Gamma_{\mathbf{c}}}\sum_{\ell_{\mathbf{n}}\in\Gamma_{\mathbf{n}}}\left\vert
\mathbf{s}\left(  \ell_{\mathbf{n}}\ell_{\mathbf{c}}h_{\mathbf{n}%
}h_{\mathbf{c}}\right)  \right\vert ^{2}.
\]
Rewriting the identity in $H$ as $e=\ell_{\mathbf{c}}^{-1}\ell_{\mathbf{c}},$
we obtain:
\[
\sum_{\ell_{\mathbf{c}}\in\Gamma_{\mathbf{c}}}\sum_{\ell_{\mathbf{n}}\in
\Gamma_{\mathbf{n}}}\left\vert \mathbf{s}\left(  \ell_{\mathbf{n}}%
\ell_{\mathbf{c}}h\right)  \right\vert ^{2}=\sum_{\ell_{\mathbf{c}}\in
\Gamma_{\mathbf{c}}}\sum_{\ell_{\mathbf{n}}\in\Gamma_{\mathbf{n}}}\left\vert
\mathbf{s}\left(  \ell_{\mathbf{n}}\ell_{\mathbf{c}}h_{\mathbf{n}}\left(
\ell_{\mathbf{c}}^{-1}\ell_{\mathbf{c}}\right)  h_{\mathbf{c}}\right)
\right\vert ^{2}.
\]
Next, appealing to the normality of $H_{\mathbf{n}}$ in $H,$ it is clear that
$\ell_{\mathbf{n}}\ell_{\mathbf{c}}h_{\mathbf{n}}\ell_{\mathbf{c}}^{-1}\in
H_{\mathbf{n}}.$ Moving forward,
\[
\sum_{\ell_{\mathbf{c}}\in\Gamma_{\mathbf{c}}}\sum_{\ell_{\mathbf{n}}\in
\Gamma_{\mathbf{n}}}\left\vert \mathbf{s}\left(  \left(  \ell_{\mathbf{n}}%
\ell_{\mathbf{c}}h_{\mathbf{n}}\ell_{\mathbf{c}}^{-1}\right)  \left(
\ell_{\mathbf{c}}h_{\mathbf{c}}\right)  \right)  \right\vert ^{2}=\sum
_{\ell_{\mathbf{c}}\in\Gamma_{\mathbf{c}}}\sum_{\ell_{\mathbf{n}}\in
\Gamma_{\mathbf{n}}}\left\vert \mathbf{s}_{\mathbf{n}}\left(  \ell
_{\mathbf{n}}\ell_{\mathbf{c}}h_{\mathbf{n}}\ell_{\mathbf{c}}^{-1}\right)
\right\vert ^{2}\cdot\left\vert \mathbf{s}_{\mathbf{c}}\left(  \ell
_{\mathbf{c}}h_{\mathbf{c}}\right)  \right\vert ^{2}.
\]
However, by assumption, for any fixed $\ell_{\mathbf{c}}\in\Gamma_{\mathbf{c}%
},$ since $\ell_{\mathbf{c}}h_{\mathbf{n}}\ell_{\mathbf{c}}^{-1}\in
H_{\mathbf{n}},$ it is clearly the case that $\sum_{\ell_{\mathbf{n}}\in
\Gamma_{\mathbf{n}}}\left\vert \mathbf{s}_{\mathbf{n}}\left(  \ell
_{\mathbf{n}}\ell_{\mathbf{c}}h_{\mathbf{n}}\ell_{\mathbf{c}}^{-1}\right)
\right\vert ^{2}=1.$ Therefore,
\[
\sum_{\ell_{\mathbf{c}}\in\Gamma_{\mathbf{c}}}\sum_{\ell_{\mathbf{n}}\in
\Gamma_{\mathbf{n}}}\left\vert \mathbf{s}_{\mathbf{n}}\left(  \ell
_{\mathbf{n}}\ell_{\mathbf{c}}h_{\mathbf{n}}\ell_{\mathbf{c}}^{-1}\right)
\right\vert ^{2}\cdot\left\vert \mathbf{s}_{\mathbf{c}}\left(  \ell
_{\mathbf{c}}h_{\mathbf{c}}\right)  \right\vert ^{2}=\sum_{\ell_{\mathbf{c}%
}\in\Gamma_{\mathbf{c}}}\left\vert \mathbf{s}_{\mathbf{c}}\left(
\ell_{\mathbf{c}}h_{\mathbf{c}}\right)  \right\vert ^{2}=1,
\]
and we conclude that
\[
\sum_{\ell_{\mathbf{c}}\in\Gamma_{\mathbf{c}}}\sum_{\ell_{\mathbf{n}}\in
\Gamma_{\mathbf{n}}}\left\vert \mathbf{s}\left(  \ell_{\mathbf{n}}%
\ell_{\mathbf{c}}h\right)  \right\vert ^{2}=1
\]
for all $h$ in $H.$

In light of Lemma \ref{lemmata}, Theorem \ref{PFEXP} is immediate.

\section{Examples and applications}

\subsection{A class of matrix groups satisfying the assumptions}

Proposition \ref{embed} gives us a procedure for constructing an extensive
collection of matrix groups satisfying the conditions listed in Section
\ref{assump}. We thank Gestur Olafsson for showing us the class of linear
groups described in Proposition \ref{embed}. This class of groups is inspired
by some earlier work of J. Wolf on the representation theory of maximal
parabolic subgroups of classical Lie groups \cite{wolf1976unitary}.

\begin{proposition}
\label{embed} Let%
\[
H=\left\{  \left[
\begin{array}
[c]{cc}%
h & \mathrm{0}_{n}\\
\mathrm{0}_{n} & \mathrm{Id}_{n}%
\end{array}
\right]  :h\in K\right\}  \text{ and }N=\left\{  \left[
\begin{array}
[c]{cc}%
\mathrm{Id}_{n} & x\\
\mathrm{0}_{n} & \mathrm{Id}_{n}%
\end{array}
\right]  :x\in\mathfrak{gl}\left(  n,%
%TCIMACRO{\U{211d} }%
%BeginExpansion
\mathbb{R}
%EndExpansion
\right)  \right\}
\]
where $K$ is a closed and connected matrix subgroup of $GL\left(
n,\mathbb{R}\right)  .$ Put $G=NH.$ Then there exists a linear functional
$\lambda\in\mathfrak{n}^{\ast}$ such that the smooth map $h\mapsto
\beta_{\lambda}\left(  h\right)  =Ad\left(  h^{-1}\right)  ^{\ast}\lambda$ is
an immersion at the identity of $H$.
\end{proposition}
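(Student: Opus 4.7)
The plan is to invoke Proposition \ref{discrete}, which reduces the required immersion property to exhibiting a single $\lambda \in \mathfrak{n}^{\ast}$ whose stabilizer
\[
H_{\lambda} = \{h \in H : Ad(h^{-1})^{\ast}\lambda = \lambda\}
\]
is discrete; in fact I will arrange $H_{\lambda}=\{e\}$ outright. As a preliminary, observe that $\mathfrak{n}$, realized as the strictly upper block-triangular matrices $\begin{bmatrix} 0 & X \\ 0 & 0 \end{bmatrix}$ with $X \in \mathfrak{gl}(n,\mathbb{R})$, is abelian. Consequently every $\lambda \in \mathfrak{n}^{\ast}$ automatically vanishes on $[\mathfrak{n},\mathfrak{n}]=0$ and therefore defines a unitary character of $N$, so I have total freedom in choosing $\lambda$.

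Next, I identify $\mathfrak{n}$ with $\mathfrak{gl}(n,\mathbb{R})$ via $\begin{bmatrix} 0 & X \\ 0 & 0 \end{bmatrix} \leftrightarrow X$, and identify $\mathfrak{n}^{\ast}$ with $\mathfrak{gl}(n,\mathbb{R})$ through the non-degenerate trace pairing $\langle \Lambda, X\rangle = \mathrm{Tr}(\Lambda X)$. A direct matrix computation shows that for $h = \mathrm{diag}(k, \mathrm{Id}_n) \in H$ and $n = \mathrm{Id}_{2n} + \begin{bmatrix} 0 & X \\ 0 & 0 \end{bmatrix}$,
\[
h n h^{-1} \;=\; \mathrm{Id}_{2n} + \begin{bmatrix} 0 & kX \\ 0 & 0 \end{bmatrix},
\]
so $Ad(h)(X) = kX$ and $Ad(h^{-1})(X) = k^{-1}X$ on the Lie algebra level. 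Transferring this action to $\mathfrak{n}^{\ast}$ through the trace pairing and using the cyclicity of the trace, the matrix representing $Ad(h^{-1})^{\ast}\lambda$ is precisely $\Lambda k^{-1}$.

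The final step is to choose $\lambda$ corresponding to $\Lambda = \mathrm{Id}_n$. The stabilizer equation $Ad(h^{-1})^{\ast}\lambda = \lambda$ then reduces to $k^{-1} = \mathrm{Id}_n$, which forces $k = \mathrm{Id}_n$; hence $H_{\lambda} = \{e\}$, which is trivially discrete. Proposition \ref{discrete} then delivers the conclusion that $\beta_{\lambda}$ is an immersion at the identity of $H$. Alternatively, one could verify this through Proposition \ref{1} by inspecting the rank of the matrix $D_{\lambda}$ from Proposition \ref{2} with entries $\langle \lambda, [X_{ij}, A_k]\rangle = -\mathrm{Tr}(a_k E_{ij})$ for a basis $\{a_k\}$ of $\mathfrak{k}$, but the stabilizer route seems cleanest.

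I do not foresee any serious obstacle: the argument is algebraically straightforward, and the only place where care is needed is the bookkeeping in the identification of $Ad(h^{-1})^{\ast}$ on $\mathfrak{n}^{\ast}$ through the trace pairing, which amounts to tracking on which side $k^{-1}$ ends up acting on $\Lambda$. The choice $\Lambda = \mathrm{Id}_n$ is by no means distinguished; in fact any invertible $\Lambda$ would work, since $\Lambda k^{-1} = \Lambda$ is equivalent to $k = \mathrm{Id}_n$ whenever $\Lambda$ is invertible, and invertible matrices form a Zariski-dense open set, in line with the general fact recorded earlier in Section~\ref{proofs} that the set of $\omega\in\mathfrak{n}^{\ast}$ giving an immersion is Zariski open.
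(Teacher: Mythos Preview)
Your proof is correct and follows essentially the same route as the paper: both identify $\mathfrak{n}^{\ast}$ with $\mathfrak{gl}(n,\mathbb{R})$ via a trace pairing, compute the coadjoint action explicitly as a one-sided multiplication by $k^{-1}$, and take $\Lambda=\mathrm{Id}_n$ (the paper likewise notes that any invertible $\Lambda$ would do). The only cosmetic difference is that the paper concludes directly from the formula $\beta_\lambda(h)=(h^{-1})^{T}$ that $\beta_\lambda$ is an immersion, whereas you route the conclusion through the triviality of $H_\lambda$ and Proposition~\ref{discrete}; both are equally valid finishing moves.
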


\begin{proof}
Let
\[
G=\left\{  \left[
\begin{array}
[c]{cc}%
h & x\\
\mathrm{0}_{n} & \mathrm{Id}_{n}%
\end{array}
\right]  :h\in K\text{ and }x\in\mathfrak{gl}\left(  n,%
%TCIMACRO{\U{211d} }%
%BeginExpansion
\mathbb{R}
%EndExpansion
\right)  \right\}  .
\]
Clearly, the map
\[
h\mapsto\left[
\begin{array}
[c]{cc}%
h & \mathrm{0}_{n}\\
\mathrm{0}_{n} & \mathrm{Id}_{n}%
\end{array}
\right]
\]
defines a Lie group isomorphism between $K$ and $H$ and we shall identify $K$
with $H$ via this isomorphism. By assumption, the dimension of $N$ must be
greater than or equal to that of $H,$ and the Lie algebra of $N$ takes the
form
\[
\mathfrak{n}=\left\{  \left[
\begin{array}
[c]{cc}%
\mathrm{0}_{n} & x\\
\mathrm{0}_{n} & \mathrm{0}_{n}%
\end{array}
\right]  :x\in\mathfrak{gl}\left(  n,%
%TCIMACRO{\U{211d} }%
%BeginExpansion
\mathbb{R}
%EndExpansion
\right)  \right\}  .
\]
We identify $\mathfrak{n}$ with its dual such that for $\lambda\in
\mathfrak{n}^{\ast}$ and $X\in\mathfrak{n,}$ the pairing $\left\langle
\lambda,X\right\rangle $ is given by the trace of the matrix $\lambda^{T}X.$
For $\lambda\in\mathfrak{n}^{\ast},$ the map
\[
\beta_{\lambda}:H\rightarrow\beta_{\lambda}\left(  H\right)  \subseteq
\mathfrak{n}^{\ast}%
\]
described in (\ref{Beta}) is computed as follows. Put
\[
\lambda_{\omega}=\lambda=\left[
\begin{array}
[c]{cc}%
\mathrm{0}_{n} & \omega\\
\mathrm{0}_{n} & \mathrm{0}_{n}%
\end{array}
\right]  \in\mathfrak{n}^{\ast}\text{ }%
\]
where $\omega$ is a matrix of order $n$ and let $h\in H.$ Then
\[
\left\langle \lambda,Ad\left(  h^{-1}\right)  X\right\rangle =\left\langle
\left[
\begin{array}
[c]{cc}%
\mathrm{0}_{n} & \omega\\
\mathrm{0}_{n} & \mathrm{0}_{n}%
\end{array}
\right]  ,\left[
\begin{array}
[c]{cc}%
h^{-1} & \mathrm{0}_{n}\\
\mathrm{0}_{n} & \mathrm{Id}_{n}%
\end{array}
\right]  \left[
\begin{array}
[c]{cc}%
\mathrm{0}_{n} & x\\
\mathrm{0}_{n} & \mathrm{0}_{n}%
\end{array}
\right]  \right\rangle .
\]
Suppose next, that $\omega$ is the identity matrix of order $n.$ Then
\[
\left\langle \lambda,Ad\left(  h^{-1}\right)  X\right\rangle =\mathrm{Tr}%
\left(  \underset{\lambda^{T}}{\underbrace{\left[
\begin{array}
[c]{cc}%
\mathrm{0}_{n} & \mathrm{0}_{n}\\
\mathrm{Id}_{n} & \mathrm{0}_{n}%
\end{array}
\right]  }}\underset{Ad\left(  h^{-1}\right)  X}{\underbrace{\left[
\begin{array}
[c]{cc}%
h^{-1} & \mathrm{0}_{n}\\
\mathrm{0}_{n} & \mathrm{Id}_{n}%
\end{array}
\right]  \left[
\begin{array}
[c]{cc}%
\mathrm{0}_{n} & x\\
\mathrm{0}_{n} & \mathrm{0}_{n}%
\end{array}
\right]  }}\right)  .
\]
Since
\[
\left[
\begin{array}
[c]{cc}%
\mathrm{0}_{n} & \mathrm{0}_{n}\\
\mathrm{Id}_{n} & \mathrm{0}_{n}%
\end{array}
\right]  \left[
\begin{array}
[c]{cc}%
h^{-1} & \mathrm{0}_{n}\\
\mathrm{0}_{n} & \mathrm{Id}_{n}%
\end{array}
\right]  =\left[
\begin{array}
[c]{cc}%
\mathrm{0}_{n} & \mathrm{0}_{n}\\
h^{-1} & \mathrm{0}_{n}%
\end{array}
\right]  ,
\]
it follows that
\[
\left\langle \lambda,Ad\left(  h^{-1}\right)  X\right\rangle =\mathrm{Tr}%
\left(  \left[
\begin{array}
[c]{cc}%
\mathrm{0}_{n} & \mathrm{0}_{n}\\
h^{-1} & \mathrm{0}_{n}%
\end{array}
\right]  \left[
\begin{array}
[c]{cc}%
\mathrm{0}_{n} & x\\
\mathrm{0}_{n} & \mathrm{0}_{n}%
\end{array}
\right]  \right)  .
\]
As such,
\[
\left\langle Ad\left(  h^{-1}\right)  ^{\ast}\lambda,X\right\rangle
=\left\langle \left[
\begin{array}
[c]{cc}%
\mathrm{0}_{n} & \left(  h^{-1}\right)  ^{T}\\
\mathrm{0}_{n} & \mathrm{0}_{n}%
\end{array}
\right]  ,\left[
\begin{array}
[c]{cc}%
\mathrm{0}_{n} & x\\
\mathrm{0}_{n} & \mathrm{0}_{n}%
\end{array}
\right]  \right\rangle
\]
and we obtain%
\[
Ad\left(  h^{-1}\right)  ^{\ast}\lambda=\left[
\begin{array}
[c]{cc}%
\mathrm{0}_{n} & \left(  h^{-1}\right)  ^{T}\\
\mathrm{0}_{n} & \mathrm{0}_{n}%
\end{array}
\right]  .
\]
By a slight abuse of notation, we can now say that $\beta_{\lambda}\left(
h\right)  =\left(  h^{-1}\right)  ^{T}$ and clearly, this implies that
$\beta_{\lambda}$ is an immersion at the identity of $H.$
\end{proof}

\begin{remark}
Note that the proof of Proposition \ref{embed} still works if we were to
replace $\lambda_{\omega}\in\mathfrak{n}^{\ast}$ with a linear function
$\lambda_{\omega^{\prime}}$ such that $\omega^{\prime}$ is any other
invertible matrix.
\end{remark}

\subsection{Some explicit formulas for $\mathbf{W}_{\lambda}$ and
applications}

\label{W}

Since $\mathbf{W}_{\lambda}$ plays a central role in this work, we exhibit
below some examples in which $\mathbf{W}_{\lambda}$ is explicitly computed. In
some cases, we also give a construction of frames and orthonormal bases.

\begin{example}
\label{solv} Let $G=%
%TCIMACRO{\U{211d} }%
%BeginExpansion
\mathbb{R}
%EndExpansion
^{3}\rtimes%
%TCIMACRO{\U{211d} }%
%BeginExpansion
\mathbb{R}
%EndExpansion
$ be a semidirect product equipped with the product
\[
\left(  x,t\right)  \left(  y,s\right)  =\left(  x+\left[
\begin{array}
[c]{ccc}%
\cos t & -\sin t & 0\\
\sin t & \cos t & 0\\
0 & 0 & e^{t}%
\end{array}
\right]  y,t+s\right)  .
\]
The group $G$ is simply connected and solvable but it is not completely
solvable. Next, let $\lambda\in\mathfrak{n}^{\ast}$ such that $\lambda=\left(
0,1,0\right)  .$ The corresponding induced representation $\pi$ is neither
irreducible nor square-integrable neither. The non irreducibility of $\pi$
follows from the fact that the stabilizer subgroup $H_{\lambda}$ in $H$ is not
trivial \cite[Chapter 6]{folland2016course}. Next, $Ad\left(  h^{-1}\right)
^{\ast}\lambda=\left(  -\sin t,\cos t,0\right)  \ $and letting $P^{\ast
}:\mathfrak{n}^{\ast}\rightarrow\mathfrak{n}^{\ast}$ such that $P^{\ast
}\left(  \lambda_{1},\lambda_{2},\lambda_{3}\right)  =\left(  \lambda
_{1},0,0\right)  ,$ we obtain $\Theta_{\lambda}\left(  t\right)  =-\sin t$
which is clearly a local diffeomorphism at zero (clearly not a global
diffeomorphism.) Its local inverse is given by $\Theta_{\lambda}^{-1}\left(
\xi\right)  =-\arcsin\left(  \xi\right)  $. As a result,
\[
\mathbf{W}_{\lambda}\left(  \xi\right)  =\frac{1}{\sqrt{1-\xi^{2}}}.
\]
Staying away from the singular points of $\Theta_{\lambda},$ fix
$\mathcal{O}=\left(  -\frac{\pi}{4},\frac{\pi}{4}\right)  .$ Then
$\Theta_{\lambda}$ defines a diffeomorphism between $\mathcal{O}$ and its
image. Let $\mathbf{f}$ be a continuous function supported on $\left(
-\frac{\pi}{4},\frac{\pi}{4}\right)  $ such that
\[
m_{H,\mathbf{f}}=\inf_{t}\left(  \sum_{\ell\in\alpha%
%TCIMACRO{\U{2124} }%
%BeginExpansion
\mathbb{Z}
%EndExpansion
}\left\vert \frac{\mathbf{f}\left(  t+\ell\right)  }{\sqrt{1-\sin\left(
t+\ell\right)  ^{2}}}\right\vert ^{2}\right)  >0
\]
and
\[
M_{H,\mathbf{f}}=\sup_{t}\left(  \sum_{\ell\in\alpha%
%TCIMACRO{\U{2124} }%
%BeginExpansion
\mathbb{Z}
%EndExpansion
}\left\vert \frac{\mathbf{f}\left(  t+\ell\right)  }{\sqrt{1-\sin\left(
t+\ell\right)  ^{2}}}\right\vert ^{2}\right)  <\infty
\]
for some positive real number $\alpha.$ Take for example,
\[
\mathbf{f}\left(  t\right)  =\sqrt{1-\sin\left(  t\right)  ^{2}}\left(
1_{\left(  -\frac{1}{2},\frac{1}{2}\right)  }\ast1_{\left(  -\frac{1}{2}%
,\frac{1}{2}\right)  }\right)  ^{1/2}\left(  t\right)  \text{ and }\alpha=1.
\]
Note also that $\left\{  \frac{e^{2\pi i\left\langle \xi,Y\right\rangle }%
}{2^{1/4}}:Y\in\frac{1}{\sqrt{2}}%
%TCIMACRO{\U{2124} }%
%BeginExpansion
\mathbb{Z}
%EndExpansion
\right\}  $ is an orthonormal basis for $L^{2}\left(  C,d\xi\right)  $ where
$C=\sin\left(  -\frac{\pi}{4},\frac{\pi}{4}\right)  =\left(  -\frac{\sqrt{2}%
}{2},\frac{\sqrt{2}}{2}\right)  .$ Let
\[
\Gamma=\left\{  \left(  0,0,\ell\right)  \left(  \kappa,0,0\right)  :\ell
\in\alpha%
%TCIMACRO{\U{2124} }%
%BeginExpansion
\mathbb{Z}
%EndExpansion
,\kappa\in\frac{1}{\sqrt{2}}%
%TCIMACRO{\U{2124} }%
%BeginExpansion
\mathbb{Z}
%EndExpansion
\right\}  \subset G.
\]
By Proposition \ref{FProperty}, the system $\mathcal{S}\left(  \mathbf{f,}%
\Gamma\right)  $ is a frame for $L^{2}\left(
%TCIMACRO{\U{211d} }%
%BeginExpansion
\mathbb{R}
%EndExpansion
\right)  $ with frame bounds $0<m_{H,\mathbf{f}}\cdot\sqrt{2}\leq
M_{H,\mathbf{f}}\cdot\sqrt{2}<\infty.$
\end{example}

\begin{example}
\label{shearlets}(A Toeplitz shearlet group, \cite{dahlke2012coorbit}) Let
$G=\mathbb{R}^{2}\rtimes\mathbb{R}^{2}$ be a semi-direct product group with
multiplication law given by $\left(  v,t\right)  \left(  w,s\right)  =\left(
v+t\diamond w,t+s\right)  $ where
\[
t\diamond w=\left[
\begin{array}
[c]{cc}%
e^{t_{2}} & t_{1}e^{t_{2}}\\
0 & e^{t_{2}}%
\end{array}
\right]  \left[
\begin{array}
[c]{c}%
w_{1}\\
w_{2}%
\end{array}
\right]  .
\]
For a fixed linear functional $\lambda$ of $\mathbb{R}^{2},$ let
$\pi=\mathrm{ind}_{\mathbb{R}^{2}}^{\mathbb{R}^{2}\rtimes\mathbb{R}^{2}%
}\left(  \chi_{\lambda}\right)  $ be a unitary representation of $G$ realized
as acting on $L^{2}\left(  \mathbb{R}^{2}\right)  $ as follows. For a
square-integrable function $\mathbf{f}$ over $\mathbb{R}^{2},$ the action of
$\pi$ is described as follows
\[
\left[  \pi\left(  v,s\right)  \mathbf{f}\right]  \left(  t\right)  =e^{2\pi
i\left\langle \left[
\begin{array}
[c]{c}%
\lambda_{1}e^{-t_{2}}\\
e^{-t_{2}}\left(  \lambda_{2}-\lambda_{1}t_{1}\right)
\end{array}
\right]  ,%
\begin{array}
[c]{c}%
v_{1}\\
v_{2}%
\end{array}
\right\rangle }\cdot\mathbf{f}\left(  t-s\right)  \text{ for }\left(
v,t\right)  \in\mathbb{R}^{2}\rtimes\mathbb{R}^{2}.
\]
Take $\lambda_{1}=1$ and $\lambda_{2}=0$, and $J$ to be the set containing $1$
and $2.$ By identifying $G$ with its Lie algebra, $\varphi$ is just the
identity map. Next, let $\Theta_{\lambda}:%
%TCIMACRO{\U{211d} }%
%BeginExpansion
\mathbb{R}
%EndExpansion
^{2}\rightarrow\left(  0,\infty\right)  \times%
%TCIMACRO{\U{211d} }%
%BeginExpansion
\mathbb{R}
%EndExpansion
$ such that $\Theta_{\lambda}\left(  t_{1},t_{2}\right)  =\left(  e^{-t_{2}%
},-e^{-t_{2}}t_{1}\right)  .$ The inverse of $\Theta_{\lambda}$ is given by
$\Theta_{\lambda}^{-1}:\left(  0,\infty\right)  \times%
%TCIMACRO{\U{211d} }%
%BeginExpansion
\mathbb{R}
%EndExpansion
\rightarrow%
%TCIMACRO{\U{211d} }%
%BeginExpansion
\mathbb{R}
%EndExpansion
^{2}$ such that $\Theta_{\lambda}^{-1}\left(  \xi_{1},\xi_{2}\right)  =\left(
-\frac{\xi_{2}}{\xi_{1}},\ln\left(  \frac{1}{\xi_{1}}\right)  \right)  $ and
the Jacobian of $\Theta_{\lambda}^{-1}$ is
\[
\mathrm{Jac}_{\Theta_{\lambda}^{-1}}\left(  \xi_{1},\xi_{2}\right)  =\left[
\begin{array}
[c]{cc}%
\xi_{2}\xi_{1}^{-2} & -\xi_{1}^{-1}\\
-\xi_{1}^{-1} & 0
\end{array}
\right]  .
\]
Since $H$ is commutative and is identified with its Lie algebra,
\[
\mathbf{W}_{\lambda}\left(  \xi_{1},\xi_{2}\right)  =\left\vert \det
\mathrm{Jac}_{\Theta_{\lambda}^{-1}}\left(  \xi_{1},\xi_{2}\right)
\right\vert =\xi_{1}^{-2}.
\]
Next, $\Theta_{\lambda}$ defines a global diffeomorphism between $%
%TCIMACRO{\U{211d} }%
%BeginExpansion
\mathbb{R}
%EndExpansion
^{2}$ and $\left(  0,\infty\right)  \times%
%TCIMACRO{\U{211d} }%
%BeginExpansion
\mathbb{R}
%EndExpansion
.$ According to Theorem \ref{main} for any continuous function $\mathbf{f}$
supported on some compact set $K\subset%
%TCIMACRO{\U{211d} }%
%BeginExpansion
\mathbb{R}
%EndExpansion
^{2}$, there exists a discrete set $\Gamma_{K,\mathbf{f}}\subset G$ such that
$\left\{  \pi\left(  \gamma\right)  \mathbf{f}:\gamma\in\Gamma_{K,\mathbf{f}%
}\right\}  $ is a frame for $L^{2}\left(
%TCIMACRO{\U{211d} }%
%BeginExpansion
\mathbb{R}
%EndExpansion
^{2},dx\right)  .$
\end{example}

\begin{example}
\label{SL2}(A case where $H$ is $SL\left(  2,%
%TCIMACRO{\U{211d} }%
%BeginExpansion
\mathbb{R}
%EndExpansion
\right)  $) Following the construction given in Proposition \ref{embed}, let
\[
G=\left\{  \left[
\begin{array}
[c]{cccc}%
1 & 0 & x_{1} & x_{2}\\
0 & 1 & x_{3} & x_{4}\\
0 & 0 & 1 & 0\\
0 & 0 & 0 & 1
\end{array}
\right]  \left[
\begin{array}
[c]{cccc}%
a\cos\theta & as\cos\theta-\frac{1}{a}\sin\theta & 0 & 0\\
a\sin\theta & \frac{1}{a}\cos\theta+as\sin\theta & 0 & 0\\
0 & 0 & 1 & 0\\
0 & 0 & 0 & 1
\end{array}
\right]  :x_{1},\cdots,x_{4},\theta,a,s\in%
%TCIMACRO{\U{211d} }%
%BeginExpansion
\mathbb{R}
%EndExpansion
\right\}
\]
be a matrix group with Lie algebra $\mathfrak{g=n+h}$ such that $\mathfrak{n}$
is spanned by
\[
\left[
\begin{array}
[c]{cccc}%
0 & 0 & 1 & 0\\
0 & 0 & 0 & 0\\
0 & 0 & 0 & 0\\
0 & 0 & 0 & 0
\end{array}
\right]  ,\left[
\begin{array}
[c]{cccc}%
0 & 0 & 0 & 1\\
0 & 0 & 0 & 0\\
0 & 0 & 0 & 0\\
0 & 0 & 0 & 0
\end{array}
\right]  ,\left[
\begin{array}
[c]{cccc}%
0 & 0 & 0 & 0\\
0 & 0 & 1 & 0\\
0 & 0 & 0 & 0\\
0 & 0 & 0 & 0
\end{array}
\right]  ,\left[
\begin{array}
[c]{cccc}%
0 & 0 & 0 & 0\\
0 & 0 & 0 & 1\\
0 & 0 & 0 & 0\\
0 & 0 & 0 & 0
\end{array}
\right]
\]
and $\mathfrak{h}$ is spanned by the matrices
\[
\left[
\begin{array}
[c]{cccc}%
0 & -1 & 0 & 0\\
1 & 0 & 0 & 0\\
0 & 0 & 0 & 0\\
0 & 0 & 0 & 0
\end{array}
\right]  ,\left[
\begin{array}
[c]{cccc}%
0 & 1 & 0 & 0\\
0 & 0 & 0 & 0\\
0 & 0 & 0 & 0\\
0 & 0 & 0 & 0
\end{array}
\right]  ,\left[
\begin{array}
[c]{cccc}%
1 & 0 & 0 & 0\\
0 & -1 & 0 & 0\\
0 & 0 & 0 & 0\\
0 & 0 & 0 & 0
\end{array}
\right]  .
\]
Let $\varphi:SL\left(  2,%
%TCIMACRO{\U{211d} }%
%BeginExpansion
\mathbb{R}
%EndExpansion
\right)  =H\rightarrow%
%TCIMACRO{\U{211d} }%
%BeginExpansion
\mathbb{R}
%EndExpansion
^{3}$ such that
\[
\varphi\left(  \left[
\begin{array}
[c]{cccc}%
a\cos\theta & as\cos\theta-\frac{1}{a}\sin\theta & 0 & 0\\
a\sin\theta & \frac{1}{a}\cos\theta+as\sin\theta & 0 & 0\\
0 & 0 & 1 & 0\\
0 & 0 & 0 & 1
\end{array}
\right]  \right)  =\left(  \theta,a,s\right)  .
\]
Then $\varphi$ defines a local diffeomorphism at the identity of $H$ and there
exists an open set $\mathcal{O}$ around the identity of $H$ such that $\left(
\mathcal{O},\varphi\right)  $ is a smooth chart. Next, we fix a linear
functional
\[
\lambda=\left[
\begin{array}
[c]{cccc}%
0 & 0 & 1 & 0\\
0 & 0 & 0 & 1\\
0 & 0 & 0 & 0\\
0 & 0 & 0 & 0
\end{array}
\right]  \in\mathfrak{n}^{\ast}.
\]
To compute $Ad\left(  h^{-1}\right)  ^{\ast}\lambda,$ we proceed as follows.
Let $M\left(  \theta,a,s\right)  $ be the inverse transpose of the following
matrix.
\[
\left[
\begin{array}
[c]{cccc}%
\cos\theta & -\sin\theta & 0 & 0\\
\sin\theta & \cos\theta & 0 & 0\\
0 & 0 & 1 & 0\\
0 & 0 & 0 & 1
\end{array}
\right]  \left[
\begin{array}
[c]{cccc}%
a & 0 & 0 & 0\\
0 & 1/a & 0 & 0\\
0 & 0 & 1 & 0\\
0 & 0 & 0 & 1
\end{array}
\right]  \left[
\begin{array}
[c]{cccc}%
1 & s & 0 & 0\\
0 & 1 & 0 & 0\\
0 & 0 & 1 & 0\\
0 & 0 & 0 & 1
\end{array}
\right]  .
\]
With straightforward calculations, we obtain
\[
Ad\left(  h^{-1}\right)  ^{\ast}\lambda=M\left(  \theta,a,s\right)
\lambda=\left[
\begin{array}
[c]{cccc}%
0 & 0 & a^{-1}\cos\theta+as\sin\theta & -a\sin\theta\\
0 & 0 & a^{-1}\sin\theta-as\cos\theta & a\cos\theta\\
0 & 0 & 0 & 0\\
0 & 0 & 0 & 0
\end{array}
\right]  .
\]
The map $h\mapsto Ad\left(  h^{-1}\right)  ^{\ast}\lambda$ is clearly an
immersion at the identity of $H.$ Next, let $P^{\ast}:\mathfrak{n}^{\ast
}\rightarrow\mathfrak{n}^{\ast}$ be a linear projection satisfying
\[
P^{\ast}\left(  \left[
\begin{array}
[c]{cccc}%
0 & 0 & \frac{1}{a}\cos\theta+as\sin\theta & -a\sin\theta\\
0 & 0 & \frac{1}{a}\sin\theta-as\cos\theta & a\cos\theta\\
0 & 0 & 0 & 0\\
0 & 0 & 0 & 0
\end{array}
\right]  \right)  =\left[
\begin{array}
[c]{cccc}%
0 & 0 & 0 & -a\sin\theta\\
0 & 0 & \frac{1}{a}\sin\theta-as\cos\theta & a\cos\theta\\
0 & 0 & 0 & 0\\
0 & 0 & 0 & 0
\end{array}
\right]  .
\]
With respect to this projection, we define $\Theta_{\lambda}$ (in local
coordinates)\ as follows
\[
\Theta_{\lambda}\left(  \theta,a,s\right)  =\left(  \frac{-a^{2}s\cos
\theta+\sin\theta}{a},-a\sin\theta,a\cos\theta\right)  .
\]
The Jacobian of $\Theta_{\lambda}$ is given by
\[
\left[
\begin{array}
[c]{ccc}%
\frac{1}{a}\cos\theta+as\sin\theta & -s\cos\theta-\frac{1}{a^{2}}\sin\theta &
-a\cos\theta\\
-a\cos\theta & -\sin\theta & 0\\
-a\sin\theta & \cos\theta & 0
\end{array}
\right]
\]
and its determinant $a^{2}\cos\theta$ is nonzero at $\left(  \theta
,a,s\right)  =\left(  0,1,0\right)  $. Thus, $\Theta_{\lambda}$ is a local
diffeomorphism at $\left(  0,1,0\right)  $. Using the following Haar measure
$a^{-3}d\theta dads$ on $H,$ we obtain
\[
\mathbf{W}_{\lambda}\left(  \xi\right)  =\frac{1}{\left\vert \xi
_{3}\right\vert \cdot\left(  \xi_{2}^{2}+\xi_{3}^{2}\right)  ^{2}}.
\]
Let $\mathcal{O}$ be a relatively compact subset of $SL\left(  2,%
%TCIMACRO{\U{211d} }%
%BeginExpansion
\mathbb{R}
%EndExpansion
\right)  $ such that $\Theta_{\lambda}$ defines a diffeomorphism between
$\mathcal{O}$ and its image. According to Theorem \ref{main}, for any
continuous function $\mathbf{f}$ supported on $\mathcal{O}$, there exists a
discrete set $\Gamma_{\mathbf{f,}\mathcal{O}}\subset G$ such that $\left\{
\pi\left(  \gamma\right)  \mathbf{f}:\gamma\in\Gamma_{\mathbf{f,}\mathcal{O}%
}\right\}  $ is a frame for $L^{2}\left(  SL\left(  2,%
%TCIMACRO{\U{211d} }%
%BeginExpansion
\mathbb{R}
%EndExpansion
\right)  \right)  .$
\end{example}

\subsection{Construction of orthonormal bases}

We present below some examples illustrating Proposition \ref{criteria}. Our
first example shares a striking resemblance with Gabor analysis.

\begin{example}
\label{ONB}Let
\[
\mathfrak{n}=\sum_{k=1}^{3}\mathbb{R}X_{k},\mathfrak{h}=\sum_{k=1}%
^{2}\mathbb{R}A_{k}%
\]
such that
\[
\sum_{k=1}^{3}x_{k}X_{k}=\left[
\begin{array}
[c]{cccc}%
0 & 0 & 0 & x_{1}\\
0 & 0 & 0 & x_{2}\\
0 & 0 & 0 & x_{3}\\
0 & 0 & 0 & 0
\end{array}
\right]  \text{ and }\sum_{k=1}^{2}a_{k}A_{k}=-\left[
\begin{array}
[c]{cccc}%
0 & a_{1} & a_{2} & 0\\
0 & 0 & a_{1} & 0\\
0 & 0 & 0 & 0\\
0 & 0 & 0 & 0
\end{array}
\right]  .
\]
Every element of $G=NH$ can be uniquely factored as
\[
\exp\left(  \sum_{k=1}^{3}x_{k}X_{k}\right)  \exp\left(  \sum_{k=1}^{2}%
a_{k}A_{k}\right)  =\left[
\begin{array}
[c]{cccc}%
1 & 0 & 0 & x_{1}\\
0 & 1 & 0 & x_{2}\\
0 & 0 & 1 & x_{3}\\
0 & 0 & 0 & 1
\end{array}
\right]  \left[
\begin{array}
[c]{cccc}%
1 & -a_{1} & \frac{1}{2}a_{1}^{2}-a_{2} & 0\\
0 & 1 & -a_{1} & 0\\
0 & 0 & 1 & 0\\
0 & 0 & 0 & 1
\end{array}
\right]  .
\]
In fact $G$ is a step-three metabelian nilpotent Lie group which was also
studied in \cite{oussa2017regular} in the context of sampling theory for
left-invariant spaces. Next, let $\lambda=X_{1}^{\ast}$ \ and define the
linear projection $P:\mathfrak{n}\rightarrow\mathfrak{n}$ of rank two such
that
\[
P\left(  \sum_{k=1}^{3}x_{k}X_{k}\right)  =\sum_{k=2}^{3}x_{k}X_{k}.
\]
Note that the center of the algebra $\mathfrak{g}=\mathfrak{n}+\mathfrak{h}$
is contained in the null-space of $P$ and
\[
\Theta_{\lambda}\left(  t_{1},t_{2}\right)  =\left[
\begin{array}
[c]{ccc}%
0 & 0 & 0\\
0 & 1 & 0\\
0 & 0 & 1
\end{array}
\right]  \left[
\begin{array}
[c]{ccc}%
1 & 0 & 0\\
t_{1} & 1 & 0\\
\frac{1}{2}t_{1}^{2}+t_{2} & t_{1} & 1
\end{array}
\right]  \left[
\begin{array}
[c]{c}%
1\\
0\\
0
\end{array}
\right]  =\left[
\begin{array}
[c]{c}%
0\\
t_{1}\\
\frac{1}{2}t_{1}^{2}+t_{2}%
\end{array}
\right]  .
\]
To simplify our presentation, we just write
\[
\Theta_{\lambda}\left(  t_{1},t_{2}\right)  =\left[  t_{1},\frac{1}{2}%
t_{1}^{2}+t_{2}\right]  ^{T}.
\]
It is easy to verify that $\Theta_{\lambda}$ defines a diffeomorphism between
$%
%TCIMACRO{\U{211d} }%
%BeginExpansion
\mathbb{R}
%EndExpansion
^{2}$ and its image. In fact, since
\[
\det\left[
\begin{array}
[c]{cc}%
\dfrac{\partial\left(  t_{1}\right)  }{\partial t_{1}} & \dfrac{\partial
\left(  t_{1}\right)  }{\partial t_{2}}\\
\dfrac{\partial\left(  \frac{1}{2}t_{1}^{2}+t_{2}\right)  }{\partial t_{1}} &
\dfrac{\partial\left(  \frac{1}{2}t_{1}^{2}+t_{2}\right)  }{\partial t_{2}}%
\end{array}
\right]  =\det\left[
\begin{array}
[c]{cc}%
1 & 0\\
t_{1} & 1
\end{array}
\right]  =1,
\]
it follows that $\mathbf{W}_{\lambda}\left(  \xi\right)  =1.$ Moreover, each
of the following sets: $\left[  -1/2,1/2\right)  ^{2}$ and $\Theta_{\lambda
}\left(  \left[  -1/2,1/2\right)  ^{2}\right)  $ tiles $%
%TCIMACRO{\U{211d} }%
%BeginExpansion
\mathbb{R}
%EndExpansion
^{2}$ translationally by $%
%TCIMACRO{\U{2124} }%
%BeginExpansion
\mathbb{Z}
%EndExpansion
^{2}$.

\includegraphics[scale=0.3]{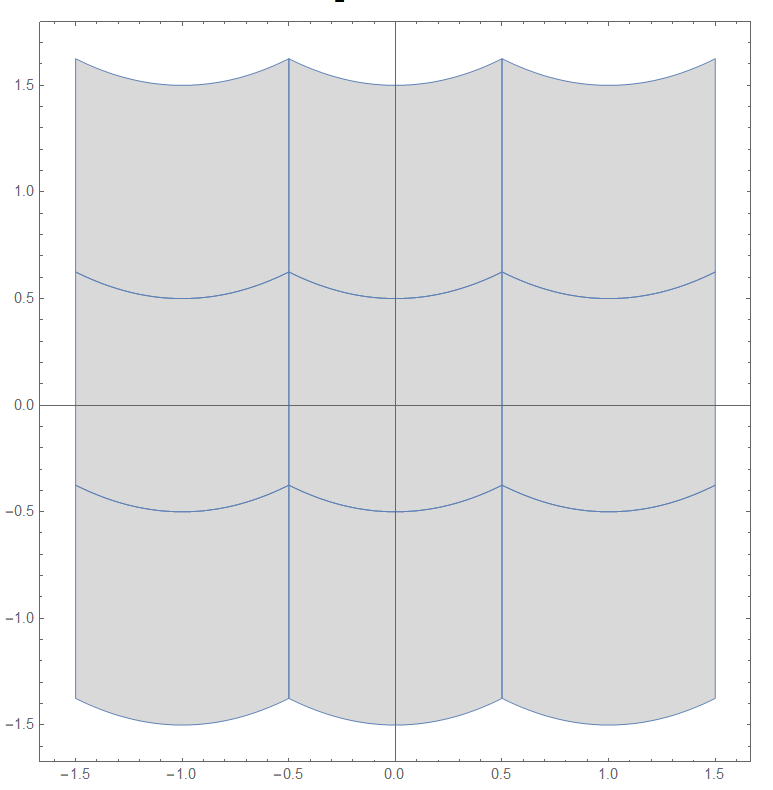} \newline Tiling the plane by integral
translation of $\Theta_{\lambda}\left(  \left[  -1/2,1/2\right)  ^{2}\right)
$

Put $\Gamma=\exp\left(  \sum_{k=1}^{2}\mathbb{Z}A_{k}\right)  \exp\left(
\sum_{k=2}^{3}\mathbb{Z}X_{k}\right)  .$ Appealing to Proposition
\ref{criteria}, we conclude that
\[
\left\{  \mathrm{ind}_{N}^{NH}\left(  \chi_{X_{1}^{\ast}}\right)  \left(
\gamma\right)  1_{\left[  -1/2,1/2\right)  ^{2}}:\gamma\in\Gamma\right\}
\]
is an orthonormal basis for $L^{2}\left(  \mathbb{R}^{2},dt\right)  .$
\end{example}

Our next example gives a construction of orthonormal bases on a class of
nilpotent Lie groups.

\begin{example}
Following the notation of Proposition \ref{embed}, we let
\[
G=\left\{  \left[
\begin{array}
[c]{cc}%
h & x\\
\mathrm{0}_{n} & \mathrm{Id}_{n}%
\end{array}
\right]  :h\in K\text{ and }x\in\mathfrak{gl}\left(  n,%
%TCIMACRO{\U{211d} }%
%BeginExpansion
\mathbb{R}
%EndExpansion
\right)  \right\}
\]
such that
\[
K=\left\{  \left[
\begin{array}
[c]{cccc}%
1 & a_{11} & \cdots & a_{1n}\\
& 1 & \ddots & \vdots\\
&  & \ddots & a_{n1}\\
&  &  & 1
\end{array}
\right]  :\left(  a_{11},\cdots a_{n1}\right)  \in%
%TCIMACRO{\U{211d} }%
%BeginExpansion
\mathbb{R}
%EndExpansion
^{\frac{n^{2}-n}{2}}\right\}  .
\]
Here $K$ is a simply connected nilpotent Lie group. In fact, the group $G$ is
also a simply connected nilpotent Lie group. Next, we consider the following
linear functional
\[
\lambda=\left[
\begin{array}
[c]{cc}%
\mathrm{0}_{n} & \mathrm{Id}_{n}\\
\mathrm{0}_{n} & \mathrm{0}_{n}%
\end{array}
\right]  \in\mathfrak{n}^{\ast}.
\]
The map $h\mapsto Ad\left(  h^{-1}\right)  ^{\ast}\lambda$ defines an
immersion at the identity of $H$ and the representation obtained by inducing
$\chi_{\lambda}$ is irreducible. Let $P^{\ast}:\mathfrak{n}^{\ast}%
\rightarrow\mathfrak{n}^{\ast}$ be the linear projection given by
\[
P^{\ast}\left(  \left[
\begin{array}
[c]{cc}%
\mathrm{0}_{n} & \omega\\
\mathrm{0}_{n} & \mathrm{0}_{n}%
\end{array}
\right]  \right)  =\left[
\begin{array}
[c]{cc}%
\mathrm{0}_{n} & \Pi\left(  \omega\right) \\
\mathrm{0}_{n} & \mathrm{0}_{n}%
\end{array}
\right]
\]
where
\[
\Pi\left(  \left[
\begin{array}
[c]{cccc}%
\lambda_{11} & \lambda_{12} & \cdots & \lambda_{1n}\\
\lambda_{21} & \lambda_{22} & \cdots & \lambda_{2n}\\
\vdots & \vdots & \ddots & \vdots\\
\lambda_{n1} & \lambda_{n2} & \cdots & \lambda_{nn}%
\end{array}
\right]  \right)  =\left[
\begin{array}
[c]{ccccc}%
0 &  &  &  & \\
\lambda_{21} & 0 &  &  & \\
\lambda_{31} & \cdots & 0 &  & \\
\vdots & \vdots & \vdots & \ddots & \\
\lambda_{n1} & \lambda_{n2} & \cdots & \lambda_{nn-1} & 0
\end{array}
\right]  .
\]
We describe $\Theta_{\lambda}:H\rightarrow\mathfrak{n}^{\ast}$ with respect to
the projection $P^{\ast}$ as follows
\[
\Theta_{\lambda}\left(  h\right)  =\left[
\begin{array}
[c]{cc}%
\mathrm{0}_{n} & \Pi\left(  \left(  h^{-1}\right)  ^{T}\right) \\
\mathrm{0}_{n} & \mathrm{0}_{n}%
\end{array}
\right]  .
\]
$\Theta_{\lambda}$ is a polynomial function in the coordinates of $H.$
Moreover, $\Theta_{\lambda}$ defines a global diffeomorphism between $H$ and
its image, and $\mathbf{W}_{\lambda}$ is merely the constant function $1$.
Appealing to the results in \cite{oussa2018frames}, it is not hard to verify
that there exist $C,\mathbf{f,}\Gamma_{H},\Gamma_{N}$ satisfying Conditions
\ref{Cond} (2) (6) (7) and (8). By Proposition \ref{criteria} Part (2), the
representation $\pi=\mathrm{ind}_{N}^{NH}\left(  \chi_{\lambda}\right)  $
admits an orthonormal basis of the type $\mathcal{S}\left(  \left\vert
C\right\vert ^{-1/2}\mathbf{f,}\Gamma_{H}^{-1}\Gamma_{N}\right)  $ for
$L^{2}\left(  H\right)  .$
\end{example}

\section{Concluding remarks}

For the special case where $NH$ is a nilpotent Lie group, Proposition
\ref{criteria} Part 2 gives sufficient conditions for the existence of
orthonormal bases generated by a function of the type $\mathbf{s}\cdot1_{A}.$
K. Gr\"{o}chenig and D. Rottensteiner recently \cite{grochenig2017orthonormal}
proved the following. Let $G$ be a graded Lie group (thus nilpotent Lie group)
with a one-dimensional center. Next, let $\pi$ be a square-integrable
irreducible representation of $G$ realized as acting in $L^{2}\left(
%TCIMACRO{\U{211d} }%
%BeginExpansion
\mathbb{R}
%EndExpansion
^{d}\right)  $ which is square-integrable modulo its center. Then there exist
a set $\Gamma\subset G$ and a compact set $F$ such that
\[
\left\{  \left\vert F\right\vert ^{-1/2}\pi\left(  \gamma\right)  1_{F}%
:\gamma\in\Gamma\right\}
\]
is an orthonormal basis. We would also like to remark that there exist unitary
representations of nilpotent Lie groups which are not square-integrable modulo
the center which we can still discretize to construct orthonormal bases for
the corresponding Hilbert space. To see this, let $G$ be a six-dimensional
simply connected nilpotent Lie group with Lie algebra spanned by
$X_{23},X_{13},X_{12},X_{3},X_{2},X_{1}$ with corresponding dual vector space
spanned by the dual basis
\[
X_{23}^{\ast},X_{13}^{\ast},X_{12}^{\ast},X_{3}^{\ast},X_{2}^{\ast}%
,X_{1}^{\ast}%
\]
such that $\left\langle X_{k}^{\ast},X_{j}\right\rangle =\delta_{kj}$. The
non-trivial brackets of the Lie algebra are given by $\left[  X_{i}%
,X_{j}\right]  =X_{ij}$ for $i<j.$ Thus, the elements $X_{23},X_{13},X_{12}$
span the center of the Lie algebra of $G$ and $G$ is a step-two free nilpotent
Lie group on three generators. Fix a linear functional
\[
\lambda=\lambda_{23}X_{23}^{\ast}+\lambda_{13}X_{13}^{\ast}+\lambda_{12}%
X_{12}^{\ast}+\lambda_{3}X_{3}^{\ast}+\lambda_{2}X_{2}^{\ast}+\lambda_{1}%
X_{1}^{\ast}%
\]
such that $\lambda_{23}$ is nonzero, and consider a subalgebra $\mathfrak{n}$
spanned by the vectors $X_{23},X_{13},X_{12},\lambda_{12}X_{3}-\lambda
_{13}X_{2}+\lambda_{23}X_{1}.$ Then $\mathfrak{n}$ is a maximal algebra of
$\mathfrak{g}$ such that $\left[  \mathfrak{n},\mathfrak{n}\right]  $ is
contained in the kernel of the linear functional $\lambda.$ $\mathfrak{n}$ is
called a polarizing algebra subordinated to the linear functional $\lambda$
\cite{oussa2015computing}. Next, we consider the corresponding irreducible
representation $\pi_{\lambda}$ of $G$ realized as acting in $L^{2}\left(
\mathbb{R}\right)  $ as follows
\begin{equation}
\pi_{\lambda}\left(  \exp tX\right)  f\left(  x\right)  =%
\begin{cases}
f\left(  x-t\right)  & \text{ if }X=X_{2}\\
e^{-2\pi i\lambda_{23}xt}f\left(  x\right)  & \text{ if }X=X_{3}\\
e^{2\pi it\lambda_{1}}e^{2\pi ixt\lambda_{12}}e^{-2\pi i\frac{t^{2}%
\lambda_{12}\lambda_{13}}{\lambda_{23}}}f\left(  x-\frac{t\lambda_{12}%
}{\lambda_{23}}\right)  & \text{ if }X=X_{1}\\
e^{2\pi it\lambda_{ij}}f\left(  x\right)  & \text{ if }X=X_{ij}%
\end{cases}
.
\end{equation}
Let
\[
\mathfrak{r}_{\lambda}=\left\{  X\in\mathfrak{g}:ad\left(  X\right)  ^{\ast
}\lambda=0\right\}
\]
be the so-called radical of $\lambda.$ Then $\mathfrak{r}_{\lambda}$ contains
the center $\mathfrak{z}\left(  \mathfrak{g}\right)  $ of $\mathfrak{g}$ and
since
\[
\dim\left(  \mathfrak{r}_{\lambda}\right)  =4>3=\dim\left(  \mathfrak{z}%
\left(  \mathfrak{g}\right)  \right)  ,
\]
according to \cite[4.5.4 Corollary]{corwin2004representations}), $\pi
_{\lambda}$ is not square-integrable modulo the center of $G$. Although, the
action given by $\pi_{\lambda}\left(  \exp X_{1}\right)  $ is quite
complicated, we do not need to take it into account for the construction of
orthonormal bases. To this end, it suffices to select our discrete set to be
$\Gamma=\exp\left(  \lambda_{23}^{-1}\mathbb{Z}X_{3}\right)  \exp\left(
\mathbb{Z}X_{2}\right)  .$ Given $k,j\in%
%TCIMACRO{\U{2124} }%
%BeginExpansion
\mathbb{Z}
%EndExpansion
$ and $\mathbf{f}\in L^{2}\left(  \mathbb{R}\right)  ,$ we have
\[
\exp\left(  \lambda_{23}^{-1}kX_{3}\right)  \mathbf{f}\left(  x\right)
=e^{-2\pi ikx}f\left(  x\right)  \text{ \ \ \ }\exp\left(  jX_{2}\right)
\mathbf{f}\left(  x\right)  =\mathbf{f}\left(  x-j\right)  .
\]
and the collection $\left\{  \pi_{\lambda}\left(  \gamma\right)  1_{\left[
0,1\right]  }:\gamma\in\Gamma\right\}  $ is an orthonormal basis for
$L^{2}\left(  \mathbb{R}\right)  .$ Note that in this example, Theorem
\ref{main} does not even apply since $G$ cannot be written as a semi-direct
product of the type $NH$ for some closed subgroups $N,H$ satisfying our assumptions.

We intend to generalize these constructions in a future investigation.

\section*{Acknowledgments}

I thank Gestur Olafsson for pointing me toward the book of J. Wolf which then
inspired me to state and prove Proposition \ref{embed}. I am also grateful for
Hartmut F\"{u}hr for sharing with me a complete proof of Lemma
\ref{communicated}.

\end{document}